\DeclareMathOperator{\Res}{Res}
\DeclareMathOperator{\reg}{reg}
\newcommand{\tr}{\mathrm{tr}}
\newcommand{\mult}{\mathrm{mult}}
\newtheorem {theorem}    {Theorem}[section]
\newtheorem {lemma}      [theorem]    {Lemma}
\newtheorem {corollary}  [theorem]    {Corollary}
\newtheorem {proposition}[theorem]    {Proposition}
\newtheorem{conjecture}[theorem]{Conjecture}
\theoremstyle{definition}
\theoremstyle{remark}
\newtheorem{remark}{Remark}
\DeclareMathOperator{\SL}{SL}
\DeclareMathOperator{\GL}{GL}
\DeclareMathOperator{\Aut}{Aut}
\DeclareMathOperator{\End}{End}
\DeclareMathOperator{\ad}{ad}
\DeclareMathOperator{\re}{{re}}
\DeclareMathOperator{\im}{{im}}
\renewcommand{\a}{\alpha}
\newcommand{\Z}{{\mathbb Z}}
\newcommand{\R}{{\mathbb R}}
\newcommand{\C}{{\mathbb C}}
\newcommand{\N}{{\mathbb N}}
\newcommand{\Q}{{\mathbb Q}}
\newcommand{\M}{\mathbb{M}}
\author{Darlayne Addabbo}
\email{addabbo@math.arizona.edu}
\author{Lisa Carbone}
\email{lisa.carbone@rutgers.edu}
\author{Elizabeth Jurisich}
\email{jurisiche@cofc.edu}
\author{Maryam Khaqan}
\email{maryam.khaqan@utoronto.ca}
\author{Scott H. Murray}
\email{scotthmurray@gmail.com}
\title{\large{Vertex operators for imaginary $\mathfrak{gl}_2$ subalgebras  of the Monster Lie algebra}}
\date{\today}
\begin{document}

\thanks{2000 Mathematics subject classification. Primary 20G44, 81R10; Secondary 22F50, 17B67.}
\thanks{The first author is grateful for an AMS-Simons Travel Grant. The second author's research is partially supported by the Simons Foundation, Mathematics and Physical Sciences-Collaboration Grants for Mathematicians, Award Number 422182. }

\begin{abstract}  
The Monster Lie algebra $\mathfrak m$ is a quotient of the physical space of the vertex algebra $V=V^\natural\otimes V_{1,1}$, where $V^\natural$ is the Moonshine module vertex operator algebra of Frenkel, Lepowsky, and Meurman, and $V_{1,1}$ is the vertex algebra corresponding to the rank 2 even unimodular lattice~$\textrm{II}_{1,1}$. %
We construct vertex algebra elements that project to bases for subalgebras of~$\mathfrak m$ isomorphic to~$\mathfrak{gl}_{2}$, corresponding to each imaginary simple root, denoted $(1,j)$ for $j>0$.
Our method requires the existence of pairs of primary vectors in $V^{\natural}$ satisfying some natural conditions, which we prove. We show that the action of the Monster finite simple group $\mathbb{M}$ on the subspace of primary vectors in $V^\natural$ induces an $\M$-action on the set of  $\mathfrak{gl}_2$ subalgebras corresponding to a fixed imaginary simple root. %
We use the generating function for dimensions of subspaces of primary vectors of $V^\natural$ to prove that this action is non-trivial for small values of $j$.
\end{abstract}

\begingroup
\def\uppercasenonmath#1{} %
\let\MakeUppercase\relax %
\maketitle
\endgroup

\section{Introduction}
In 1978, McKay and Thompson \cite{Th} observed that the first few Fourier coefficients of the  normalized\footnote{We use the normalization of \cite{FLMPNAS} and \cite{FLM} where the constant term of $J(q)$ is taken to be zero.} elliptic modular invariant $J(q)$
can be written as sums involving the dimensions of irreducible representations of the Monster finite simple group $\M$ as follows:%
\begin{align*}\label{numerology}
\begin{split}
196884 &= 1 + 196883,\\
21493760 &= 1 + 196883 + 21296876,\\
864299970 &= 2 \cdot 1 + 2 \cdot 196883 + 21296876 + 842609326.
\end{split}
\end{align*}
Recall that
\begin{equation}\label{Jfun}J(q ) = \sum_{n= -1}^\infty c(n)q^n=\dfrac{1}{q} + 196884q +21493760q^2+864299970q^3+\dots,
\end{equation}
where  $q=e^{2\pi{\bf{i}}\tau}$ for $\tau\in\C$ with $\text{Im}(\tau)>0.$ This {observation }gave rise to  the McKay-Thompson conjecture \cite{Th} that there exists a naturally defined graded infinite-dimensional $\M$-module $W = \bigoplus^\infty_{n=-1} W_n,$ which satisfies $\dim(W_{n}) = c(n)$ for $n \in \Z$.%

In the same paper {\cite{Th}}, Thompson also drew attention to the generating functions $$\sum^{\infty}_{n=-1}{\tr}(g|_{W_n} )q^n $$
for $g\in\M$. These became known as  McKay-Thompson series and  were subsequently explicitly described  in a conjecture by Conway and Norton \cite{CN}. %

A defining property of %
$J(q)$ is that it is the normalized {Hauptmodul} for the field of $\SL(2, \Z)$-modular invariant functions on the upper half-plane  (\cite{CN},  \cite {FLM}).   Recall that for a genus-zero curve $X$,  the field of modular-invariant functions on the curve is generated by a single element. If we normalize such a generator so that its constant term is zero and it has leading coefficient 1, then the generating function is unique. Such a  function is then called the normalized {Hauptmodul} 
for~$X$. %

Conway and Norton built upon McKay and Thompson's conjecture to formulate the Monstrous Moonshine Conjecture:
\begin{conjecture}\label{CNconjecture} (\cite{CN}) There exists a natural infinite-dimensional $\Z$-graded  $\M$-module  $W=\bigoplus_{n=-1}^\infty W_n$ such that $\sum_{n=-1}^\infty\dim(W_n)q^n = J(q)$ and such that for  every $g \in \M$, the generating function 
$$\sum_{n=-1}^\infty{\text{\rm tr}}\left(g|_ 
{W_n} \right)q^n $$ 
is  the Hauptmodul of a genus zero function field arising from a suitable discrete subgroup of $\SL(2,\R)$.

\end{conjecture}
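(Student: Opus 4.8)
The plan is to take $W$ to be the Moonshine module vertex operator algebra $V^\natural$ of Frenkel, Lepowsky, and Meurman. By construction $V^\natural=\bigoplus_{n=-1}^\infty V^\natural_n$ is $\Z$-graded with $\dim V^\natural_n=c(n)$, and the Monster $\M$ acts on $V^\natural$ by grade-preserving automorphisms; this immediately supplies a graded $\M$-module with graded dimension $J(q)$, settling the first assertion of the conjecture. What remains is to identify each McKay-Thompson series $T_g(q)=\sum_{n\ge -1}\tr\bigl(g|_{V^\natural_n}\bigr)q^n$ with the normalized Hauptmodul of a genus-zero subgroup of $\SL(2,\R)$, as tabulated by Conway and Norton.

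To access the McKay-Thompson series I would pass to the Monster Lie algebra $\mathfrak m$, realized as a quotient of the physical space of $V^\natural\otimes V_{1,1}$ via the Goddard-Thorn (no-ghost) theorem. This yields a generalized Kac-Moody (Borcherds-Kac-Moody) algebra, graded by the lattice $\textrm{II}_{1,1}$, on which $\M$ acts by automorphisms, with the root space of degree $(m,n)$ isomorphic as an $\M$-module to $V^\natural_{mn}$ for $m>0$. The key tool is then the Weyl-Kac-Borcherds denominator identity for $\mathfrak m$. Since $\mathfrak m$ has a single real simple root together with imaginary simple roots of degree $(1,n)$ and multiplicity $c(n)$, the untwisted identity collapses to the product formula
$$p^{-1}\prod_{m>0,\,n\in\Z}\bigl(1-p^m q^n\bigr)^{c(mn)}=J(p)-J(q).$$

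The heart of the argument is the equivariant (twisted) denominator identity obtained, for each $g\in\M$, by taking the graded trace of $g$ on the Lie algebra homology of the subalgebra spanned by positive roots. This produces
$$T_g(p)-T_g(q)=p^{-1}\exp\Bigl(-\sum_{k\ge 1}\sum_{m>0,\,n\in\Z}\tr\bigl(g^k\big|V^\natural_{mn}\bigr)\frac{p^{mk}q^{nk}}{k}\Bigr),$$
which encodes a family of relations among the coefficients of $T_g$ and of the $T_{g^k}$. I would then extract from these the replication formulas, showing that every $T_g$ is a completely replicable function. The final step is a rigidity argument: a completely replicable function is determined by finitely many of its initial coefficients, and the candidate Conway-Norton Hauptmoduls are themselves completely replicable; hence it suffices to match the low-order coefficients of each $T_g$, computable directly from the action of $\M$ on the first several graded pieces of $V^\natural$, against those of the conjectured Hauptmodul.

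The main obstacle I anticipate is twofold. First, one must correctly identify the simple roots and their multiplicities so that the denominator identity takes the stated product form; this rests essentially on the no-ghost theorem and on the $J(q)$-graded character of $V^\natural$. Second, and more delicate, is the passage from the twisted denominator identity to the full set of replication relations, together with the rigidity theorem that complete replicability plus finitely many coefficients pins down the function. It is this rigidity that converts an infinite family of algebraic identities into the genus-zero Hauptmodul property, and the place where a careful finite verification against the Conway-Norton data appears unavoidable.
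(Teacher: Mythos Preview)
The paper does not prove this statement. Conjecture~\ref{CNconjecture} is stated there purely as historical context: it is the Conway--Norton Monstrous Moonshine conjecture, and the paper simply records (in the paragraphs following the conjecture) that it was settled by the combination of Frenkel--Lepowsky--Meurman's construction of $V^\natural$ and Borcherds's work \cite{BoPNAS}, \cite{BoInvent}. There is no ``paper's own proof'' to compare against.

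That said, your outline is a faithful sketch of Borcherds's argument, which is precisely what the paper is alluding to: take $W=V^\natural$, pass to the Monster Lie algebra $\mathfrak m$ via the No-ghost Theorem, extract the twisted denominator identities, deduce the replication recursions, and finish by matching finitely many coefficients against the Conway--Norton tables. One small indexing slip: you write $V^\natural=\bigoplus_{n\ge -1}V^\natural_n$ with $\dim V^\natural_n=c(n)$, whereas the paper uses the conformal-weight grading $V^\natural=\bigoplus_{n\ge 0}V^\natural_n$ with $\dim V^\natural_{n+1}=c(n)$; your convention is Borcherds's shifted one, so just be explicit about which you are using. Otherwise the plan is sound, and the two ``obstacles'' you flag---identifying the simple roots via the No-ghost Theorem, and the rigidity step from complete replicability---are exactly the substantive inputs in Borcherds's proof.
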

 
Note that this conjecture predates the construction of the Monster finite simple group $\M$,  which had been predicted to exist by  Fischer and Griess. The Monster group  $\M$ was constructed in 1982 by Griess \cite{Gr} as a group of automorphisms of a 196883-dimensional commutative nonassociative algebra  over $\Q$, generated by the actions of an involution centralizer $C$ and another involution $\sigma$. (See the Introduction and Remark 10.4.13 of \cite{FLM} for a description of the role of the group $C$ in \cite{Gr} and earlier conjectures).

In their seminal work \cite{FLMPNAS} and \cite{FLM}, Frenkel, Lepowsky and Meurman  constructed a natural infinite-dimensional $\Z$-graded $\M$-module, $V^\natural$, with graded dimension~$J(q)$, thus proving the McKay-Thompson conjecture. 
They constructed the module $V^{\natural}$ as a vertex operator algebra and called it the {\it Moonshine module}.
In particular, if 1 is the identity element of $\M$,  in \cite{FLM} the authors showed that 
$$\tr\left(1|_{V_{n+1}^{\natural}}\right) = \text{dim}\left(V_{n+1}^{\natural}\right)=c(n)$$ and the  McKay-Thompson series $\sum_{n=-1}^\infty\tr\left(1|_{V_{n+1}^{\natural}}\right)$ is $  J(q)$.
In \cite{FLM}, the authors explicitly constructed $\M$ as a group of  vertex operator algebra automorphisms of $V^{\natural}$, by constructing the actions of the involution centralizer $C$ and the extra involution $\sigma$  on $V^{\natural}$.

Note that there is a shift between the grading of $W$ above and of $V^\natural$  in \cite{BoInvent}, as we use the conformal weight grading on $V^\natural$ throughout this paper. That is, let $L(0)$ denote the zero mode of the conformal vector $\omega^\natural$ of $V^\natural$,
  then $V^\natural=\bigoplus_{n=0}^\infty V_{n}^\natural$ where $V_n^\natural=\{v\in V^\natural \mid L(0)v=nv\}$ is the subspace of vectors of weight $n$.

Frenkel, Lepowsky and Meurman's Moonshine module $V^{\natural}$ was constructed as the sum of two subspaces $V_+$ and $V_-$,
which are the $+1$ and $-1$ eigenspaces of a certain involution in $\M$. If
an element $g\in\M$ commutes with this involution, then its  McKay-Thompson series $T^\natural_g(q) :=\sum_{n=-1}^\infty\tr\left(g|_{V_{n+1}^{\natural}}\right)q^n$ may be determined as the sum of two series given
by its traces on~$V_+$ and~$V_-$ (as done in \cite{FLM}). In this way, Frenkel, Lepowsky, and Meurman determined the graded traces for all elements of the involution centralizer $C$,   thus partially settling the Conway-Norton conjecture. For elements of $\M$ that are not conjugate to an element of $C$, it remained to determine their McKay-Thompson series.

In his remarkable works \cite{BoPNAS} and \cite{BoInvent}, Borcherds used the \cite{FLM} construction of $V^\natural$ and his discovery of  a new class of Lie algebras known as generalized Kac-Moody algebras \cite{BoGKMA}, also called Borcherds algebras, together with the No-ghost Theorem from String Theory, to determine the remaining McKay-Thompson series for $V^\natural$. He also proved that these McKay-Thompson series coincide with the modular functions described by Conway and Norton.

This work concerns the Monster Lie algebra ${\mathfrak {m}}$, which was a crucial ingredient of Borcherds's proof \cite{BoInvent}. He constructed ${\mathfrak {m}}$ as a quotient $\mathfrak{m}=P_1/R$ of the {{physical space}} $P_1$ of the vertex algebra $V=V^\natural\otimes V_{1,1}$, where $V^\natural$ is the Moonshine {m}odule {vertex operator algebra} and  $V_{1,1}$ is the vertex algebra for the even unimodular 2-dimensional Lorentzian lattice $\textrm{II}_{1,1}$. Here $R$ is the radical of a natural symmetric bilinear form on $V$.
 
Borcherds used the No-ghost {Theorem} to show that $\mathfrak{m}=P_1/R$ is isomorphic to a Borcherds algebra   \cite{BoInvent}. In \cite{JurJPAA}, an explicit Borcherds Cartan matrix $A$ was given:
\begin{equation}\label{mdec}%
{A=\smaller \begin{blockarray}{cccccccccc}
 & & \xleftrightarrow{c(-1)}  &   \multicolumn{3}{c}{$\xleftrightarrow{\hspace*{0.7cm}c(1)\hspace*{0.7cm}}$}   &  \multicolumn{3}{c}{$\xleftrightarrow{\hspace*{0.7cm} { c(2)}\hspace*{0.7cm}}$}   & \\
\begin{block}{cc(c|ccc|ccc|c)}
  &\multirow{1}{*}{$c(-1)\updownarrow$} & 2 & 0 & \dots & 0 & -1 & \dots & -1 & \dots \\ \cline{3-10}
    &\multirow{3}{*}{ $ \,\,c(1)\,\,\left\updownarrow\vphantom{\displaystyle\sum_{\substack{i=1\\i=0}}}\right.$}& 0 & -2 & \dots & -2 & -3 & \dots & -3 &   \\
      & & \vdots & \vdots & \ddots & \vdots & \vdots & \ddots & \vdots & \dots  \\
        & & 0 & -2 & \dots & -2 & -3 & \dots & -3 &   \\ \cline{3-10}
         & \multirow{3}{*}{ $\,\,c(2)\,\, \left\updownarrow\vphantom{\displaystyle\sum_{\substack{i=1\\i=0}}}\right.$}  & -1 & -3 & \dots & -3 & -4 & \dots & -4 &   \\
    && \vdots & \vdots & \ddots & \vdots & \vdots & \ddots & \vdots & \dots  \\
        && -1 & -3 & \dots & -3 & -4 & \dots & -4 &   \\ \cline{3-10}
         && \vdots &  & \vdots &  &  & \vdots & \vdots &   \\
\end{block}
\end{blockarray}}\;\;,
\end{equation}
so that  $\mathfrak m \cong\mathfrak g(A) / \mathfrak z$ where $\mathfrak g(A)$ is the Borcherds algebra associated to $A$  and $\mathfrak{z}$ is the center of $\mathfrak{g}(A)$. 
The numbers $c(j)$ are the Fourier coefficients of $q^j$  in the modular function $J(q)$, given by Equation~\eqref{Jfun}.

 We recall that for  Kac--Moody algebras or for Borcherds algebras $\mathfrak g$, there are two distinct types of roots.
A root $\alpha$ of $\mathfrak g$ is called {\it real} if $\|\alpha\|^2> 0$ and {\it imaginary} if  $\|\alpha\|^2\leq 0$. For Kac--Moody algebras, the simple roots are all real. However, Borcherds algebras may contain both real and imaginary simple roots. The existence of imaginary root generators profoundly influences the structure of these Lie algebras, as is the case for $\mathfrak{m}$.

 The Monster Lie algebra $\mathfrak{m}$ has root space decomposition (see \cite{BoInvent}, \cite{JLW})
\begin{equation*}\label{mdecomp}\mathfrak{m}=\left(\bigoplus_{m,n<0}\mathfrak{m}_{(m,n)} \oplus \mathfrak{m}_{(-1,1)}\right)\oplus \mathfrak{m}_{(0,0)}\oplus \left(\mathfrak{m}_{(1,-1)}\oplus\bigoplus_{m,n>0}\mathfrak{m}_{(m,n)}\right),\end{equation*}
where $(m,n)\in \textrm{II}_{1,1}$.  Recall \cite{FLM} that  $\M$ acts on $V^\natural$ by vertex operator algebra automorphisms and that $\Aut(V^\natural)$ is precisely $\M.$
The action of $\M$ on $V^\natural$ induces an action of $\M$ on $\frak m$. In particular, the $\M$-action on $\mathfrak{m}$ gives rise to an $\M$-action on each of the root spaces $\mathfrak{m}_{(1,n)}$ and $ \mathfrak{m}_{(-1,-n)}$ and $\M$ acts trivially on $\frak h=\mathfrak{m}_{(0,0)}$, which we take to be the Cartan subalgebra of $\frak m$  (\cite{BoInvent}, \cite{JLW}).

Borcherds \cite{BoInvent} used the No-ghost Theorem    to obtain an $\M$-module isomorphism between homogeneous subspaces  $\frak m_{(m,n)}$ for $(m,n)\neq (0,0)$ of the Monster Lie algebra and the the homogeneous subspace $V^\natural_{mn+1}$ of $V^\natural$, where $\frak h=\mathfrak{m}_{(0,0)}\cong \mathbb{R}\oplus \mathbb{R}$ is a trivial $\mathbb{M}$-module. %

We now discuss our results. In \cite{JLW}, the authors gave elements of the vertex algebra $V$ corresponding to an $\mathfrak{sl}_2$-triple associated to the unique positive real simple root of $\frak m$ and extended it to a set of generators for a $\mathfrak{gl}_2$ subalgebra of $\frak m$. They also that  noted that $\mathbb{M}$ acts trivially on  this $\mathfrak{gl}_2$ subalgebra.

In this work, we construct elements of the vertex algebra $V$ corresponding to  $\mathfrak{sl}_2$-triples associated to each imaginary simple root of $\frak m$. Since the Cartan subalgebra of $\frak m$ is two dimensional, these $\mathfrak{sl}_2$-triples  conveniently extend to a bases for  $\frak{gl}_2$ subalgebras, denoted $\mathfrak{gl}_2(j,u,v)$, associated to each imaginary simple root $(1,j)$ and each suitable pair of primary vectors $u,v$ in {$V^\natural$}.  This requires the existence of pairs $u,v$ satisfying some natural conditions, which we prove (Corollary~\ref{primary} and Appendix~\ref{sectiongen}). 

Let $P^\natural_{j}$ denote the subspace of primary vectors in $V^\natural_{j}$. Then $P^\natural_{j}$ is an $\M$-submodule of $V^\natural_{j}$ (Lemma~\ref{submodprim}).
It is straightforward to show that the inclusion $P^\natural_{j}\subseteq V^\natural_{j}$ is strict when $j>0$ (Corollary~\ref{proper}).

 We show that the action of $\mathbb{M}$ on $V^\natural$ as in \cite{FLM}, restricted to the subspace $P^\natural_{j+1}$ of primary vectors in $V^\natural_{j+1}$, induces an $\M$-action on the set of $\mathfrak{gl}_2(j,u,v)$ subalgebras corresponding to a fixed imaginary simple root. (Section~\ref{ActionOfM}). 

We give a natural sufficient condition for the non-triviality of this action; namely if $P^\natural_{j+1}$ has  more linearly independent primary vectors than the multiplicity of the trivial $\M$-module in the decomposition of $V^\natural_{j+1}$ as a direct sum of irreducible $\M$-modules, 
then the action of $\M$ on the family of $\mathfrak{gl}_2(j,u,v)$ subalgebras, corresponding to a fixed imaginary simple root $(1,j)$,  is non-trivial (Theorem~\ref{non-trivialactionR}). Using the isomorphism of $V_{j+1}^\natural$ with  $\mathfrak{m}_{(1,j)}$, and $\mathfrak{m}_{(-1,-j)}$ as $\M$-modules via the No-ghost Theorem, we use the above sufficient condition to prove that this $\M$-action is non-trivial for  $0<j<100$ (Section~\ref{ActionOfM}). To achieve this, we  used PARI/GP to compute $\dim P^\natural_{j+1}$ and the multiplicity of the trivial representation  (Appendices \ref{sectiongen} and \ref{NonTriv}).  We conjecture that this action is non-trivial for all $j>0$. %

{A motivation for this work is to understand the relationship between the action of $\M$ on $\frak m$ and a recent construction of a Lie group analog by \cite{CJM} for $\frak m$. In \cite{CJM}, the authors constructed  a Lie group analog $G(\frak m)$ for $\frak m$ given by generators and relations, where $G(\frak m)$ is generated by $\GL_2$ subgroups corresponding to positive roots, both real and imaginary. A drawback of this approach is that it does not reflect the action of the Monster  group $\mathbb M$ on $\mathfrak m$.  In \cite{ACJKM1}, we  construct a group $\mathfrak{G}(\frak m)$ which  has defining relations induced from  the $\mathfrak{gl}_2(j,u,v)$ subalgebras  constructed here and other subalgebras of $\frak m$. The group $\mathfrak{G}(\frak m)$ is generated by a subgroup $\GL_2(-1)$ corresponding to the unique real simple root $(1,-1)$  and an infinite family of subgroups $\GL_2(j,u,v)$ corresponding to imaginary simple roots $(1,j)$ for $j\geq 1$    and a suitable pair of primary vectors $u,v\in V^\natural$. The Monster group $\mathbb{M}$ acts on the set of $\GL_2(j,u,v)$ subgroups of $\mathfrak{G}(\frak m)$ and fixes $\GL_2(-1)$. In future work, we hope to investigate open questions about the relationship between the group $\mathfrak{G}(\frak m)$, the Monster group $\M$, the role of primary vectors in $V^\natural$ as in Section \ref{primary1} and irreducible $\M$-submodules of ~$V^\natural$.}

\section*{Acknowledgments}The authors are extremely grateful to John Duncan, Yi-Zhi Huang and Jim Lepowsky for their interest in this work and helpful comments. The authors also thank the anonymous referee for their comments. This work began at the WINART3 Workshop at the Banff International Research Station. The authors thank  BIRS and the organizers of the workshop.

{\section{Preliminaries}\label{prelim}
{We use the standard notation from the theory of conformal vertex algebras and vertex operator algebras (as in~\cite{FLM}, ~\cite{FHL} and ~\cite{HLZ}). Throughout this paper, following \cite{BoInvent}, we  take our base field to be $\mathbb{R}.$
Following \cite{HLZ}, a {\emph{conformal vertex algebra}} is {a }vertex algebra (in the sense of \cite{BoPNAS}, see also \cite{LL}) equipped with a $\mathbb{Z}$-grading and with a conformal vector satisfying the usual compatibility conditions (see \cite{HLZ}, Section 2). 

If $V$ also satisfies the two {\emph{grading restriction conditions}}: $V_{n}=0$ for $n<<0$  and $\dim V_{n}<\infty$ for $n\in \mathbb{Z},$ then $V$ is a {\emph{vertex operator algebra}} in the sense of \cite{FLM} and \cite{FHL}. 
{Note that, since our base field is $\mathbb{R},$ the conformal vertex algebras in this paper are vector spaces over $\mathbb{R}$.%
}

{Let ${{L=\textrm{II}_{1,1}}}$ be the even unimodular Lorentzian lattice of rank 2, which is $\Z\oplus\Z$ with bilinear form $\langle\cdot,\cdot\rangle$ given by the matrix $\left(\begin{smallmatrix} 
~0 & -1\\
-1 & ~0
\end{smallmatrix}\right)$. 
Recall that the Monster Lie algebra is defined as a quotient of the {{physical space}} of the vertex algebra $V=V^\natural\otimes V_{1,1}$, where $V^\natural$ is the Moonshine module vertex operator algebra and  $V_{1,1}$ is the  lattice vertex algebra associated to $L$ \cite{FLM}, \cite{BoPNAS}. 
}
The structure of $V_{1,1}$ will play a role in the proof of our main result (Theorem~\ref{main}, Section~\ref{mainsect}). We review the construction of $V_{1,1}$
following \cite{FLM} (Chapters 5-8), \cite{LL} and \cite{DL}. %

{Let $\mathfrak{h}=L\otimes_\mathbb{Z}\mathbb{R}$. Viewing $\mathfrak{h}$ as an abelian Lie algebra,  we define the affine Lie algebra (as in \cite{FLM}, Section 7.1) $$\widehat{\mathfrak{h}}=\coprod_{n\in \mathbb{Z}}\mathfrak{h}\otimes t^n\oplus \mathbb{R}{c\mkern-7.5mu/}$$ with Lie bracket given by
\begin{equation*}
[x\otimes t^m, y\otimes t^n]=\langle x, y\rangle m\delta_{m+n,0}{c\mkern-7.5mu/}
\end{equation*} for $x, y\in \mathfrak{h}$ and $m, n\in \mathbb{Z}$ and \begin{equation*}
[{c\mkern-7.5mu/},\widehat{\mathfrak{h}}]=0.
\end{equation*}
Let $\widehat{\mathfrak{h}}^{-}=\displaystyle\coprod_{n<0}\mathfrak{h}\otimes t^n,$ let $\widehat{\mathfrak{h}}^{+}=\displaystyle\coprod_{n>0}\mathfrak{h}\otimes t^n$ and let $\widehat{\mathfrak{h}}_\mathbb{Z}=\widehat{\mathfrak{h}}^{-}\oplus \widehat{\mathfrak{h}}^{+}\oplus \mathbb{R}{c\mkern-7.5mu/}$.

Let $U(\cdot)$ and $S(\cdot)$ denote the universal enveloping algebra and symmetric algebra, respectively. Consider the induced $\widehat{\mathfrak{h}}$-module
$$M(1):=U(\widehat{\mathfrak{h}})\otimes_{U(\mathfrak{h}\otimes \mathbb{R}[t]\oplus \mathbb{R}{c\mkern-7.5mu/})}\mathbb{R}\cong S(\widehat{\mathfrak{h}}^{-}),$$ where $\mathfrak{h}\otimes \mathbb{R}[t]$ acts trivially on $\mathbb{R}$ and ${c\mkern-7.5mu/}$ acts as $1$. As in \cite{FLM}, we will use the notation $\alpha(n)$ for the action of $\alpha\otimes t^n$ on $S(\widehat{\mathfrak{h}}^{-}),$ where $\alpha\in \mathfrak{h}$ and $n\in \mathbb{Z}.$

Let $\widehat{L}$ be the central extension $$1\longrightarrow\langle\kappa\rangle\longrightarrow\widehat{L} \bar{\longrightarrow} L\longrightarrow 1$$
of $L$ by the  group $\langle \kappa\rangle=\langle \kappa \mid \kappa^2=1\rangle$ of order 2, 
with commutator map
$$c_0:L\times L\rightarrow \mathbb{Z}/2\mathbb{Z}$$ given by $$c_0(\alpha, \beta)=\langle \alpha, \beta\rangle+2\mathbb{Z}$$ \cite{FLM} equation (7.2.22) so that $$aba^{-1}b^{-1}=\kappa^{c_0(\bar{a}, \bar{b})}$$ for $a, b\in \widehat{L}$, and images $\bar{a}, \bar{b}$ in $L$ of $a,b$ respectively. 
Note that by Proposition 5.2.3 in \cite{FLM}, $c_0$ determines $\widehat{L}$ uniquely, up to equivalence of central extensions of $L$ by $\langle \kappa\rangle$,  in the sense of \cite{FLM} (5.1.1).
Viewing $\mathbb{R}$ as a $\langle \kappa \rangle$-module such that $\kappa\cdot 1=-1,$ we define $\mathbb{R}\{L\}$ to be the induced $\widehat{L}$-module:
\begin{equation*}
\mathbb{R}\{L\}=\mathbb{R}[\widehat{L}]\otimes_{\mathbb{R}[\kappa]}\mathbb{R}=\mathbb{R}[\widehat{L}]/(\kappa-(-1))\mathbb{R}[\widehat{L}]\cong \mathbb{R}[{L}] \quad\text{ (linearly)}
\end{equation*}\cite{FLM}  (7.1.18).  Given $a\in \widehat{L}$ we denote the image of $a$ in $\mathbb{R}\{L\}$ by $\iota(a).$ Then
\begin{equation}\label{aacts}
a\cdot \iota(b)=\iota(ab)
\end{equation}and
\begin{equation}\label{kappaacts}
\kappa\cdot \iota(b)=\iota(\kappa b)=-\iota(b),
\end{equation} \cite{FLM} (7.1.19-7.1.20).
We let $V_{1,1}=S(\hat{\mathfrak{h}}^{-})\otimes \mathbb{R}\{L\}$ and we view $S(\hat{\mathfrak{h}}^{-})$ as a trivial $\widehat{L}$-module and $\mathbb{R}\{L\}$ as a trivial $\widehat{\mathfrak{h}}_\mathbb{Z}$-module.

Given $h\in \mathfrak{h}$ and $a\in \widehat{L},$ define
\begin{equation}\label{hacts}
h\cdot \iota(a)=\langle h, \bar{a}\rangle \iota(a)
\end{equation} and \begin{equation}
x^h\cdot \iota(a)=x^{\langle h, \bar{a}\rangle}\iota(a), 
\end{equation} \cite{FLM} (7.1.33) and (7.1.35).
Then $\widehat{L},$ $\widehat{h}_\mathbb{Z},$ $\mathfrak{h},$ and $x^h$ act on $V_{1,1}$ by acting on $S(\hat{\mathfrak{h}}^{-})$ or $\mathbb{R}\{L\}$ as defined above.

  As in \cite{FLM} Sections 8.4 and 8.5, the vertex operators for $V_{1,1}$ are given as follows. For $\alpha\in \mathfrak{h},$ define (\cite{FLM} (8.4.7))
\begin{equation}\label{vertexoph}\alpha(x)=\sum_{n\in \mathbb{Z}}\alpha(n)x^{-n-1}\end{equation} and given $a\in \widehat{L},$ define (\cite{FLM} (8.4.17))
\begin{equation}\label{vertexopL}Y(a, x)={}_{\circ}^{\circ}e^{\int (\bar{a}(x)-\bar{a}(0)x^{-1})}ax^{\bar{a}}{}_{\circ}^{\circ},\end{equation} where ${}_{\circ}^{\circ}~{}_{\circ}^{\circ}$ indicates that we use the normal ordering procedure as defined in \cite{FLM} Section 8.4.

Let $a\in \widehat{L},$ $\alpha_1, \cdots, \alpha_k\in \mathfrak{h},$ $n_1, \cdots, n_k\in \mathbb{Z}_+$ and define
$$v=\alpha_1(-n_1)\cdots \alpha_k(-n_k)\otimes \iota(a)=\alpha_1(-n_1)\cdots\alpha_k(-n_k)\cdot \iota(a)\in V_{1,1}.$$ 

Then define
\begin{equation}\label{verteop}Y(v, x)={}_{\circ}^{\circ}\left(\frac{1}{(n_1-1)!}\left(\frac{d}{dx}\right)^{n_1-1}\alpha_1(x)\right)\cdots \left(\frac{1}{(n_k-1)!}\left(\frac{d}{dx}\right)^{n_k-1}\alpha_k(x)\right)Y(a, x){}_{\circ}^{\circ}\end{equation} (\cite{FLM} (8.5.5)). This gives $V_{1,1}$ the structure of a conformal vertex algebra with vacuum vector $\iota(1)$ and conformal vector $\omega$ given by \begin{equation}\label{confV11}\omega=\frac{1}{4}\alpha(-1)^2+\frac{1}{4}\beta(-1)^2,\end{equation} where $\alpha=(1, 1)$ and $\beta=(1,-1)$, \cite{FLM} (8.7.2), (8.7.3). 
 Given $\alpha_1, \cdots, \alpha_k\in \mathfrak{h}$, $n_1,\cdots, n_k\in \mathbb{Z}_+$, and {$c\in \widehat{L}$, we have
\begin{equation}\label{grading}
L(0)(\alpha_1(-n_1)\cdots \alpha_k(-n_k)\cdot \iota(c))=\left( \frac{1}{2}\langle {\overline{c}, \overline{c}}\rangle+n_1+\cdots +n_k\right) \alpha_1(-n_1)\cdots \alpha_k(-n_k)\cdot{\iota(c)},
\end{equation}
where $L(0)$ denotes the zero mode of $\omega,$ that is, the coefficient of $x^{-2}$ in the vertex operator for $\omega.$
The following formula follows from \eqref{vertexopL} and the definitions given above for the operators appearing in \eqref{vertexopL}. It will be useful in the proof of Theorem \ref{main}.

Given $a, b\in \widehat{L}$, we have \cite{FLM} (8.5.11)
 \begin{equation}\label{V11op1}
 Y(\iota(a), x)\iota(b)=\exp\left(\sum_{n\ge 1}\frac{\bar{a}(-n)}{n}x^n\right)\iota(ab)x^{\langle \bar{a}, \bar{b}\rangle}.
 \end{equation} }

\begin{remark}\label{notVOA}%
As in \cite{BoInvent}, $V_{1,1}$ fails to satisfy the grading restriction conditions: for any $j\in \mathbb{Z},$ $V_{1,1}$ has nonzero vectors of weight $j$, 
and $V_{1,1}$ has infinite-dimensional subspaces of homogeneous weight.
It follows that $V=V^\natural\otimes V_{1,1}$ also fails the grading restriction conditions and hence is not a Vll operator algebra.
\end{remark}

\section{The Monster Lie algebra $\mathfrak m$}\label{Monster}

Let $V^{\natural}$ be  the Moonshine module vertex operator algebra. 
Recall that we use the conformal weight grading on $V^\natural$ so that $V^\natural=\bigoplus_{n=0}^\infty V_{n}^\natural$.  As in \cite{BoInvent}, we let $V=V^\natural\otimes V_{1,1}$.

Given a conformal vertex algebra  with conformal vector $\omega$, we use $L(j)$, $j\in \mathbb{Z}$, to denote the modes of the conformal vector, that is, the coefficients in the vertex operator $Y(\omega, x)=\sum_{j\in \mathbb{Z}}L(j)x^{-j-2}$.
  A vector $v$ in a conformal vertex algebra is called {\it primary} if it satisfies 
$L(j)v=0$ for any $j> 0$. In addition, if $L(0)v=nv$ for $n\in\Z$ 
then $v$ is \emph{primary of weight $n$} \cite{FLM}, (8.10.12).

{The vertex operator algebra} $V^\natural$ has a unique (up to multiplication by a constant) symmetric invariant bilinear form (in the sense of \cite{FHL} Section 5, see also \cite{Li}), which can be normalized to be positive definite \cite{FLM} (see Proposition ~\ref{formdef}).

 The following constructions of the Monster Lie algebra were given in \cite{BoInvent}, (see also \cite{JurJPAA}).

\begin{enumerate} 
\item  We have $\mathfrak m=P_1/R$  where 
$$P_1=\{\psi\in V^\natural\otimes V_{1,1}\mid L(0)\psi=\psi,\ L(j)\psi=0, \ j> 0\}$$
is the subspace of $V=V^\natural\otimes V_{1,1}$ of primary vectors of weight 1, and $R$ denotes the radical of a natural bilinear form on $V$ (see Section~\ref{voa}). {The subspace $P_1$ is called the {\emph{physical space}} of the {conformal }vertex algebra~$V$.}

\item Let $\frak g(A)$ be the Lie algebra associated to  $A,$ where $A$ is given by Equation~\ref{mdecomp}. The Monster Lie algebra is $\mathfrak m\cong\mathfrak{g}(A)/\mathfrak{z},$ where $\mathfrak{z}$ is the center of~$\frak g(A)$. 
\end{enumerate}
These two constructions give isomorphic Lie algebras (\cite{BoInvent}, \cite{BoGKMA}, \cite{JurJPAA}). The proof of the existence of an isomorphism between {the constructions described in }(1) and (2) uses, in part, the No-ghost Theorem of \cite{GT}.

The Monster Lie algebra has the triangular decomposition $\mathfrak m= \mathfrak{n}^-\oplus \mathfrak{h}\oplus \mathfrak{n}^+$ where $\mathfrak{n}^\pm$ are direct sums of all positive (respectively negative) root spaces, and $\mathfrak{h}$ is the Cartan subalgebra of~$\mathfrak m$.

For $j\in\Z$, {recall} the definition of  $c(j)$ above. Define index sets 
$$I^{\re}=\{(-1,1)\}, \quad I^{\im} = \{(j,k)\mid j\in\N,\; 1 \leq k\leq c(j)\}, \quad {{\text{ and }}I  =I^{\re} \sqcup I^{\im}.}$$

We will use the following presentation of $\frak{m}$ (for comparison, see \cite{BoGKMA}, \cite{JurJPAA}). The Monster Lie algebra $\mathfrak{m}$ may be generated by the set
$$\{e_{-1},f_{-1},h_{1},h_2\}\cup \{e_{jk},\ f_{jk}\mid (j,k)\in I^{\im}\}, $$
where the  generators indexed over $I^{\re}$ are denoted
$e_{-1}$, $f_{-1}$, the  generators indexed over $I^{\im}$ are denoted $\{e_{jk},\ f_{jk}\mid (j,k)\in I^{\im}\}$, and $h_1$ and $h_2$ form a basis for the Cartan subalgebra~$\mathfrak{h}$. The defining relations with respect to this generating set are
\begin{align*}
\tag{M1}\label{Mhh} [h_{1},h_2]&=0,\\
\tag{M2a}\label{Mhe-} [h_1,e_{-1}] &= e_{-1},&              
              [h_2, e_{-1}] &= -e_{-1},\\
\tag{M2b}\label{Mhe} [h_1,e_{jk}]&= e_{jk},&
              [h_2, e_{jk}] &= j e_{jk},\\
\tag{M3a} \label{Mhf-} [h_1,f_{-1}] &= -f_{-1},&
             [h_2,f_{-1}] &= f_{-1},\\
\tag{M3b}\label{Mhf} [h_1,f_{jk}]&= - f_{jk},&
              [h_2,f_{jk}] &= -j f_{jk},\\
\tag{M4a}\label{Me-f-} [e_{-1},f_{-1}]&=h_1-h_2, \\
\tag{M4b}\label{Mef-} [e_{-1},f_{jk}]&=0,  &[e_{jk},f_{-1}]&=0,\\
\tag{M4c}\label{Mef} [e_{jk},f_{pq}]&=-\delta_{jp}\delta_{kq} \left(jh_1 + h_2\right),\\ 
\tag{M5}\label{Mee}
  (\ad e_{-1})^j e_{jk}&=0,&\qquad (\ad f_{-1})^j f_{jk}&=0,
\end{align*}
for all $(j,k),\,(p,q) \in {I}^{\im}$. %

As in \cite{JLW}, we may identify the root lattice of $\mathfrak{m}$ with the Lorentzian lattice $\textrm{II}_{1,1},$ that is, $\Z\oplus\Z$ equipped with the bilinear form given by the matrix~$\left(\begin{smallmatrix}  ~0 & -1\\-1 & ~0
 \end{smallmatrix}\right)$.  
The simple roots are $\alpha_{-1}$ and $\alpha_{jk}$ for $(j,k)\in I^{\im}$. Let $Q$ denote the root lattice for $\mathfrak g(A)$ $$Q := \mathbb{Z}\alpha_{-1}\oplus\bigoplus_{(j,k)\in I^{\im}}\Z\a_{jk}.$$ In Figure \ref{fig:MonsterSimpleRoots}, blue nodes on the blue hyperbola denote the real roots with squared norm 2, and imaginary roots with squared norms $\leq -2$ are denoted by red nodes on the red hyperbolas. 
 
\begin{figure}[h]
\includegraphics[height=6.8cm]{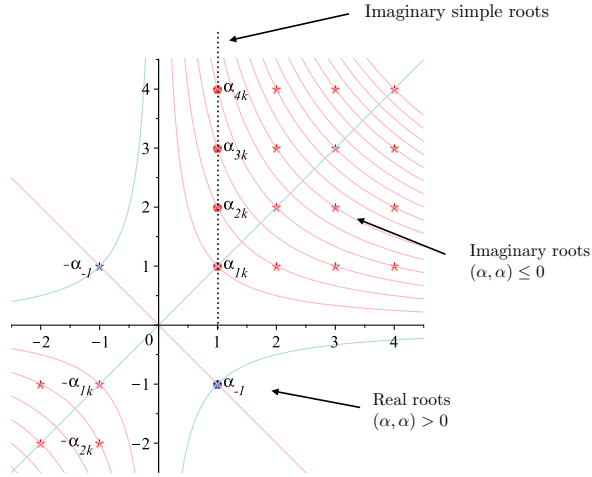} 
\caption{{Roots for the Monster Lie algebra $\frak m$.}}
\label{fig:MonsterSimpleRoots}
 \end{figure}
 We have a surjective $\Z$-linear \emph{specialization} map 
 \begin{align*} Q &\to \textrm{II}_{1,1}\\ \a&\mapsto\overline\a
 \end{align*}
 from the root lattice $Q$ of $\mathfrak{g}(A)$ to the root lattice $\textrm{II}_{1,1}$ of $\mathfrak{m}$ with
$$\overline{\a_{-1}}=(1,-1)\quad\text{and}\quad\overline{\a_{jk}}=(1,j),$$
(see Corollary 5.4 of \cite{JurJPAA}). 

For $j\in\N$, we obtain (specialized) root spaces 
\begin{align*}
 \mathfrak{m}_{(1,-1)}&=\R e_{-1}, &
 \mathfrak{m}_{(-1,1)}&= \R f_{-1},\\
 \mathfrak{m}_{(1,j)}&=\bigoplus_{k=1}^{c(j)}\,\R e_{jk}, &
 \mathfrak{m}_{(-1,-j)}&=\bigoplus_{k=1}^{c(j)} \,\R f_{jk}.
 \end{align*}
We have $\dim(\mathfrak{m}_{(m,n)})=c(mn)$ (\cite{BoInvent}, \cite{JurJPAA}). %
 The Weyl transformation $w_{\re}$ on {$\textrm{II}_{1,1}$} with respect to the unique positive real root $\alpha_{-1}$
 in the blue line in Figure~\ref{fig:MonsterSimpleRoots} is $w_{\re}:(n, m) \mapsto (m, n)$.

 We call $\{e_{-1},\ f_{-1},\ h_1-h_2\}$ the {\it $\frak{sl}_2$-triple} corresponding to the real simple root $(1,-1)$. The subalgebra of $\frak m$ with basis $e_{-1}$, $f_{-1}$, $h_1$, $h_2$
 is isomorphic to $\mathfrak{gl}_2$ and is denoted $\mathfrak{gl}_2(-1)$. {Similarly~$\{e_{jk}$, $f_{jk}, -(jh_1+h_2)\}$ }generates an $\frak{sl}_2$ subalgebra corresponding to the imaginary simple root $(1,j)$. For a description of  $\frak{gl}_2$ subalgebras of $\mathfrak m$ for all imaginary root vectors $(m,n)$ with $m>1$, see \cite{CJM}.

As discussed above,  the Monster Lie algebra is an example of a Borcherds algebra. The following theorem
(\cite{BoInvent}, see also \cite{JurJPAA}) gives criteria for a Lie algebra to be a Borcherds algebra. 

\begin{theorem}\label{GKMA}(\cite{BoInvent}, \cite{JurJPAA}) Let $\mathfrak g$ be a Lie algebra satisfying the
following conditions:
\begin{enumerate}
 \item  $\mathfrak g$ can be $\mathbb Z$-graded as $\coprod_{i\in {\mathbb
Z}} \mathfrak g_i$, where $\mathfrak g_i$ is finite dimensional if $i \neq 0$, and
$\mathfrak g$ is diagonalizable with respect to $\mathfrak g_0$. 
\item  $\mathfrak g$ has an involution $\omega$ which maps $\mathfrak g_i$
onto $\mathfrak g_{-i}$ and {acts as $-1$} on 
$\mathfrak g_0$. In particular, $\mathfrak g_0$ is abelian.
\item  $\mathfrak g$ has a Lie algebra-invariant bilinear form $(\cdot,\cdot)$,
invariant under $\omega$, such that $\mathfrak g_i$ and $\mathfrak g_j$ are
orthogonal if $i \neq -j$, and such that the form $(\cdot,\cdot)_0$,
defined by $(x,y)_0 = -(x, \omega (y))$ for $x,y \in \mathfrak g$, is positive
definite on $\mathfrak g_m $ if $m \neq 0$.
\item  $\mathfrak g_0 \subset [\mathfrak g,\mathfrak g]$. 
\end{enumerate}

Then $\mathfrak{g}$ is a Borcherds algebra, that is, there is a Borcherds Cartan matrix $A$ such that $\mathfrak{g}$ is isomorphic to $\mathfrak{g}(A)/\mathfrak{z}$ for some central subalgebra $\mathfrak{z}$. 
\end{theorem}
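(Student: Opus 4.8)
The plan is to reconstruct a Borcherds Cartan matrix $A$ from the intrinsic data of $\mathfrak g$ and then to realize $\mathfrak g$ as $\mathfrak g(A)/\mathfrak z$ by producing a surjective homomorphism $\varphi\colon\mathfrak g(A)\to\mathfrak g$ whose kernel is central. First I would set $\mathfrak g_{\pm}=\coprod_{\pm i>0}\mathfrak g_i$, so that $\mathfrak g=\mathfrak g_-\oplus\mathfrak g_0\oplus\mathfrak g_+$ with $\mathfrak g_0$ abelian by (2). The diagonalizability in (1) provides a root space decomposition $\mathfrak g=\mathfrak g_0\oplus\bigoplus_{\alpha\neq 0}\mathfrak g^{\alpha}$ over $\alpha\in\mathfrak g_0^{*}$ that refines the $\mathbb Z$-grading, positive roots lying in $\mathfrak g_+$ and negative ones in $\mathfrak g_-$. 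Since $\omega$ maps $\mathfrak g^{\alpha}$ to $\mathfrak g^{-\alpha}$, hypothesis (3) shows that the form pairs $\mathfrak g^{\alpha}$ nondegenerately with $\mathfrak g^{-\alpha}$ and vanishes between $\mathfrak g_i$ and $\mathfrak g_j$ for $i\neq -j$.

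Next I would isolate the simple roots by choosing a homogeneous basis $\{e_i\}_{i\in I}$ of $\mathfrak g_+/[\mathfrak g_+,\mathfrak g_+]$, letting $\alpha_i$ be the weight of $e_i$, and setting $f_i=-\omega(e_i)\in\mathfrak g^{-\alpha_i}$ and $h_i=[e_i,f_i]\in\mathfrak g_0$. Then $d_i:=(e_i,f_i)=(e_i,e_i)_0>0$ by the positive definiteness in (3). Defining $a_{ij}:=\alpha_j(h_i)$ through $[h_i,e_j]=a_{ij}e_j$, invariance of the form yields $(h_i,h_j)=d_i\,a_{ji}=d_j\,a_{ij}$, so $A=(a_{ij})$ is symmetrizable with positive symmetrizers $d_i$. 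The sign of $a_{ii}$ equals that of $(h_i,h_i)$: when $(h_i,h_i)>0$ the triple $\{e_i,f_i,h_i\}$ spans a copy of $\mathfrak{sl}_2$ and I rescale so that $a_{ii}=2$ (a real simple root), while $(h_i,h_i)\le 0$ gives $a_{ii}\le 0$ (an imaginary simple root). The off-diagonal inequality $a_{ij}\le 0$ for $i\neq j$ follows because $\alpha_j-\alpha_i$ is not a root, forcing $[f_i,e_j]=0$, so that $e_j$ is a lowest-weight vector for $\mathfrak{sl}_2(i)$ in the real case and positivity of $(\cdot,\cdot)_0$ forces its weight $a_{ij}$ to be nonpositive; the conditions $a_{ij}=0\Leftrightarrow a_{ji}=0$ and the integrality of $a_{ij}$ when $a_{ii}=2$ then follow from the symmetrizer relation and $\mathfrak{sl}_2$-theory, so $A$ is a Borcherds Cartan matrix.

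These data furnish, through the defining presentation of $\mathfrak g(A)$, a candidate homomorphism $\varphi\colon\mathfrak g(A)\to\mathfrak g$ sending the canonical generators to $e_i,f_i,h_i$. The Cartan relations and $[e_i,f_j]=\delta_{ij}h_i$ hold by construction; the generalized Serre relations $(\ad e_i)^{1-a_{ij}}e_j=0$ for real $i$ and $[e_i,e_j]=0$ whenever $a_{ij}=0$ must be verified in $\mathfrak g$, which I would do using $\mathfrak{sl}_2(i)$-representation theory at each real simple root together with the finite-dimensionality of the graded pieces and the positive-definite form to annihilate the offending vectors. This makes $\varphi$ well defined, and it is surjective: the $e_i$ generate $\mathfrak g_+$ by induction on degree since they span $\mathfrak g_+/[\mathfrak g_+,\mathfrak g_+]$, likewise the $f_i$ generate $\mathfrak g_-$, and (4) forces $\mathfrak g_0$ to lie in the span of the $h_i$ and the brackets $[\mathfrak g^{\alpha},\mathfrak g^{-\alpha}]$, all of which are already in the image.

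The crux, and the main obstacle, is to show that $\ker\varphi$ is central; this is the Borcherds-algebra analog of the Gabber--Kac theorem, and the positive-definite contravariant form is the decisive tool. Both $\mathfrak g(A)$ and $\mathfrak g$ carry contravariant forms and $\varphi$ intertwines them, so $\ker\varphi$ is null for the contravariant form on $\mathfrak g(A)$. By the structure theory of $\mathfrak g(A)$ the radical of that form is exactly the center, which is concentrated in the Cartan subalgebra; consequently $\ker\varphi$ meets each nonzero weight space trivially and lies in the Cartan. Since $\varphi$ carries the linearly independent $h_i$ to the $h_i\in\mathfrak g_0$, what remains of the kernel is central, so $\ker\varphi=\mathfrak z$ and $\varphi$ descends to the desired isomorphism $\mathfrak g\cong\mathfrak g(A)/\mathfrak z$. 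The genuinely technical content is establishing that the generalized Serre relations already exhaust the defining ideal of $\mathfrak g(A)$ and that the two forms match up---this is where all four hypotheses, and especially the positive definiteness in (3) that forbids nonzero null vectors of nonzero weight, are used in concert.
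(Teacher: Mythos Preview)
The paper does not supply its own proof of this theorem; it is quoted as a known result from \cite{BoInvent} and \cite{JurJPAA} and used only as background for the subsequent discussion of $\mathfrak m$. So there is nothing in the paper to compare your argument against.

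That said, your outline is essentially the standard proof one finds in those references: extract simple generators from $\mathfrak g_+/[\mathfrak g_+,\mathfrak g_+]$, build the symmetrizable matrix $A$ from the invariant form, define $\varphi:\mathfrak g(A)\to\mathfrak g$ on generators, and use positive-definiteness of $(\cdot,\cdot)_0$ to force $\ker\varphi$ into the Cartan and hence into the center. Two places deserve more care if you flesh this out. First, the claim that $\alpha_j-\alpha_i$ is not a root (used to get $[f_i,e_j]=0$ and then $a_{ij}\le 0$) is not automatic from the hypotheses as stated; you need to argue via the choice of $e_i$ as representatives of a basis of $\mathfrak g_+/[\mathfrak g_+,\mathfrak g_+]$, together with the contravariant form, that no $e_j$ can be reached from $e_i$ by applying $f$'s. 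Second, your sentence ``$\varphi$ carries the linearly independent $h_i$ to the $h_i\in\mathfrak g_0$'' obscures the point: the $h_i$ in $\mathfrak g(A)$ are independent by fiat, but their images in $\mathfrak g_0$ need not be, and it is precisely those linear dependencies that constitute the central kernel $\mathfrak z$. Once you make that explicit, the argument is complete and matches the literature.
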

For the Monster Lie algebra, we have $\mathfrak{m}=\mathfrak{g}(A)/\mathfrak{z}$ where $\mathfrak{z}$ is the  center of 
$\mathfrak{g}(A)$ and $A$ is given in Equation~\eqref{mdecomp}.

Using the No-ghost Theorem, Borcherds proved \cite{BoInvent} (see also \cite{FLM} and \cite{JLW}), that 
$\mathfrak m_{(m,n)}\cong V^\natural_{mn+1}$ as  $\mathbb{M}$-modules for $(m,n)\neq (0,0)$, and $\mathfrak m_{(0,0)}\cong\mathfrak h$ which is a trivial $\mathbb{M}$-module. This gives a representation $\mathbb{M}   \to    \Aut(\mathfrak m)$ acting trivially on $\mathfrak h$ and preserving root spaces $(m,n)$.

\section{The vertex algebraic construction of $\mathfrak m$ }\label{voa}

{In this section, we review the construction of the Monster Lie algebra as a quotient of the physical space $P_1$ of $V=V^\natural\otimes V_{1,1}$. We begin with a review of invariant bilinear forms on {conformal vertex algebras}.}
The following definition from \cite{FHL} holds for conformal vertex algebras over an arbitrary field of characteristic $0$. A bilinear form $(\cdot, \cdot)_V$ on a conformal vertex  algebra $V$ is  invariant if
\begin{equation}\label{inv}
(Y(v, x)w_1, w_2)=(w_1, Y(e^{xL(1)}(-x^{-2})^{L(0)}v, x^{-1})w_2)
\end{equation} for $v, w_1, w_2\in V.$ {Here $(-x^{-2})^{L(0)}$ is defined by $(-x^{-2})^{L(0)}v=(-x^{-2})^{\text{wt }v}v$ for $v\in V$ of homogeneous weight.}
Recall (\cite{FHL} Chapter 5, \cite{Li} Theorem 3.1) that given a {inv} operator algebra $V$ such that $\dim V_0=1$ and $L(1)V_1=0,$ there exists a unique symmetric invariant bilinear form up to multiplication by a constant.

For completeness, we include the following proposition, which will be used in the proof of our main theorem (Theorem \ref{main}).

\begin{proposition}(\cite{FHL}, \cite{Li})\label{formdef}   Let $V$ be a {inv} operator algebra  with $\dim {V_0}=1$ and $L(1)V_1=0$. Then the unique (up to multiplication by a constant) symmetric invariant bilinear form $(\cdot, \cdot)_V$ on $V$ is defined by $$(u,v)_V=0$$ if~$\text{\rm wt }u\ne \text{\rm wt }v$ and $$(u,v)_V{\bf 1}=-\Res_xx^{-1}Y(e^{xL(1)}(-x^{-2})^{L(0)}u, x^{-1})v,$$ if~$\text{\rm wt }u= \text{\rm wt }v$.

\end{proposition}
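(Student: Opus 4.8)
The plan is to prove the three assertions in turn: orthogonality of distinct weight spaces, well-definedness together with invariance of the stated residue formula, and finally uniqueness and symmetry, following the treatment in \cite{FHL} and \cite{Li}.

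First I would record why distinct weight spaces must be orthogonal for \emph{any} invariant form, so that the first clause of the formula is forced rather than imposed. Applying the invariance condition \eqref{inv} to the conformal vector $v=\omega$ and extracting the mode that produces $L(0)$ on each side shows that $L(0)$ is self-adjoint, $(L(0)w_1,w_2)_V=(w_1,L(0)w_2)_V$; hence $V_m$ and $V_n$ are orthogonal whenever $m\ne n$.

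Next, existence and the explicit formula. I would take the right-hand side as the \emph{definition} of a candidate pairing and first check that it produces a scalar. For $u,v\in V_m$ one has $(-x^{-2})^{L(0)}u=(-1)^m x^{-2m}u$ and $e^{xL(1)}u=\sum_{k\ge 0}\tfrac{x^k}{k!}L(1)^k u$ with $L(1)^k u\in V_{m-k}$; a direct weight count in $Y(\,\cdot\,,x^{-1})v=\sum_n(\,\cdot\,)_n v\,x^{n+1}$ then shows that the unique term landing in $V_0$ occurs exactly at $x^0$, so that $-\Res_x x^{-1}Y(e^{xL(1)}(-x^{-2})^{L(0)}u,x^{-1})v$ is a well-defined scalar multiple of $\mathbf 1$. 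The heart of the argument is to verify invariance \eqref{inv}. I would introduce the adjoint field $Y^{o}(u,x):=Y(e^{xL(1)}(-x^{-2})^{L(0)}u,x^{-1})$ and prove the required identity by repeatedly conjugating vertex operators using the two $\mathfrak{sl}_2$-formulas
\[
x_0^{L(0)}\,Y(a,x)\,x_0^{-L(0)}=Y(x_0^{L(0)}a,\,x_0x),
\]
\[
e^{z L(1)}\,Y(a,x)\,e^{-zL(1)}=Y\!\left(e^{z(1-zx)L(1)}(1-zx)^{-2L(0)}a,\ \tfrac{x}{1-zx}\right),
\]
together with the Jacobi identity (equivalently duality/associativity) for $V$; matching the two sides of \eqref{inv} as formal series in the variables is the main bookkeeping obstacle.

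Finally, uniqueness and symmetry. By \eqref{inv} any invariant form is determined by the linear functional $v\mapsto(\mathbf 1,v)_V$ on $V_0$: invariance lets one move all vertex-operator modes off one argument and onto the vacuum, reducing $(u,v)_V$ to pairings valued in $V_0$. Since $\dim V_0=1$, this functional is a single scalar, giving uniqueness up to multiplication by a constant. Symmetry follows from the skew-symmetry relation $Y(a,x)b=e^{xL(-1)}Y(b,-x)a$ inserted into the residue formula; the only obstruction to $(u,v)_V=(v,u)_V$ is a correction term governed by $L(1)V_1$, which vanishes by hypothesis. Thus both hypotheses enter exactly where needed: $\dim V_0=1$ makes the target one-dimensional (well-definedness and uniqueness), while $L(1)V_1=0$ kills the skew-symmetry correction (symmetry). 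The principal difficulty throughout is the formal-variable manipulation of $e^{xL(1)}$ and $(-x^{-2})^{L(0)}$ in verifying \eqref{inv}.
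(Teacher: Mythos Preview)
Your proposal is correct, but it takes a genuinely different route from the paper's own argument. The paper does not construct the form from the residue formula and then verify invariance; rather, it \emph{assumes} the existence and uniqueness of a symmetric invariant form on $V$ (citing \cite{FHL} and \cite{Li} via Theorem~3.1), normalizes it so that $({\bf 1},{\bf 1})=-1$, and then simply checks that this given form agrees with the stated residue expression. The orthogonality step is the same as yours: self-adjointness of $L(0)$ via invariance applied to $\omega$. For the equal-weight case the paper writes $(u,v)=\Res_x x^{-1}(Y(u,x){\bf 1},v)$ using the creation property, applies invariance once to move $Y(u,x)$ to the other slot, and observes that the result lies in $V_0=\R{\bf 1}$ so the normalization $({\bf 1},{\bf 1})=-1$ produces the minus sign. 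No conjugation formulas, no direct verification of \eqref{inv}, no separate symmetry argument.

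What each approach buys: your route is self-contained and reproduces the core of \cite{Li}, at the cost of the formal-variable bookkeeping you flag as the main difficulty. The paper's route is much shorter because existence, invariance, symmetry, and uniqueness are all outsourced to the cited references, leaving only the identification of the specific normalization---which is really all that Proposition~\ref{formdef} is recording for later use.
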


{\begin{proof}

 Let $(\cdot, \cdot)$ denote the symmetric invariant bilinear form on $V$ normalized such that $({\bf 1}, {\bf 1})=-1$. We will show that $(\cdot, \cdot)=(\cdot, \cdot)_V$, where $(\cdot, \cdot)_V$ is as defined in the statement of Proposition \ref{formdef}.

Given $u, v\in V$, applying Equation \eqref{inv}, along with $L(1)\omega=0$ and $L(0)\omega=2\omega,$ we have 
\begin{eqnarray*}
(L(0)u,v)&=&\Res_x x\left(\sum_{n\in \mathbb{Z}}L(n)x^{-n-2}u, v\right)
=\Res_x x\left(Y(\omega, x)u, v\right)\\
&=&\Res_x x\left(u, Y(e^{xL(1)}(-x^{-2})^{L(0)}\omega, x^{-1}v\right)
=\Res_x x^{-3}\left(u, Y(\omega, x^{-1})v\right)_V\\
&=&\Res_x x^{-3}\left(u, \sum_{n\in \mathbb{Z}}L(n)x^{n+2}v)=(u, L(0)v\right).
\end{eqnarray*} Therefore, if $\text{wt }u\ne \text{wt }v$, $(u, v)=0=(u, v)_V$.

Next, suppose that $\text{wt }u= \text{wt }v$. Using the creation property followed by Equation \eqref{inv}, we have
    \begin{eqnarray*}(u, v)&=& \text{Res}_x x^{-1}(Y(u,x){\bf{1}}, v)=\Res_x x^{-1}({\bf 1}, Y(e^{xL(1)}(-x^{-2})^{L(0)}u, x^{-1})v).\end{eqnarray*}
    Since $\text{wt }u= \text{wt }v$, $\Res_x x^{-1}Y(e^{xL(1)}(-x^{-2})^{L(0)}u, x^{-1})v$ is a multiple of ${\bf 1}$. Therefore, $(u,v)$ is determined by $({\bf 1},{\bf 1})=-1$ and we have \begin{eqnarray*}(u, v){\bf 1}&=& ({\bf{1}}, {\bf{1}})\Res_x x^{-1}Y(e^{xL(1)}(-x^{-2})^{L(0)}u, x^{-1})v\\&=&-\Res_xx^{-1}Y(e^{xL(1)}(-x^{-2})^{L(0)}u, x^{-1})v=(u, v)_V{\bf 1}.\end{eqnarray*}
\end{proof}}
{In \cite{JurJPAA}, Lemma 6.1, it was proven that there exists   
 a nonzero symmetric invariant bilinear form $(\cdot,\cdot)_{V_{1,1}}$ on the conformal vertex algebra $V_{1,1}$  over $\R$, which is unique up to multiplication by a constant. %
 }

Frenkel, Lepowsky, and Meurman defined a positive definite bilinear form $(\cdot,\cdot)$ on $V^\natural$ \cite{FLM}. It was proven in \cite{JurJPAA}, Lemma 6.2 that 
$(\cdot,\cdot)$  is invariant in the sense of Equation \eqref{inv}. Since invariant bilinear forms on $V^\natural$ are unique up to multiplication by a constant, the positive definiteness of $(\cdot,\cdot)$ implies that every nonzero invariant bilinear form on $V^\natural$ is either positive definite or negative definite. 

Applying Proposition~\ref{formdef} to $V^\natural$, the bilinear form $(\cdot,\cdot)_{V^\natural}$ on $V^\natural$  satisfies $({\bf 1},{\bf 1})=-1$. Hence it is negative definite. This choice will be convenient for the proof of Theorem~\ref{main} ({\it c.f.} \cite{JurJPAA}).

 The product of the forms $(\cdot,\cdot)_{V^\natural}$ and $(\cdot,\cdot)_{V_{1,1}}$ defines a symmetric invariant bilinear form, $(\cdot, \cdot)_V$, on the tensor product $V=V^\natural\otimes V_{1,1}$ (see \cite{JurJPAA}, page 258).

 Recall from Section \ref{Monster} that 
$$P_1=\{\psi\in V^\natural\otimes V_{1,1}\mid L(0)\psi=\psi,\ L(j)\psi=0, \ j> 0\}$$
is the subspace of $V=V^\natural\otimes V_{1,1}$ of primary vectors of weight 1. %

 Let
\[P_0=\{\psi\in V^\natural\otimes V_{1,1}\mid L(j)\psi=0, \ j\ge 0\}.\]

Then $P_1/L(-1)P_0$ is a Lie algebra with bracket defined by \[[u+L(-1)P_0,v+L(-1)P_0]=u_0v+L(-1)P_0.\] Here, $u_0$ denotes the zero-mode of $u$, that is, the coefficient of $x^{-1}$ in $Y(u, x).$ Since $L(-1)P_0$ is a subspace of the radical of $(\cdot, \cdot)_V$, this induces an invariant bilinear form on the Lie algebra $P_1/L(-1)P_0$, denoted by  $(\cdot, \cdot)_{\text{Lie}}$. 

We define $\mathfrak m $ to be the quotient of $P_1/L(-1)P_0$ by the radical of the form $(\cdot, \cdot)_{\text{Lie}}.$ As in \cite{JurJPAA}, we can identify $\mathfrak{m}$ with  $P_1/R$, where $R$ is the preimage of the radical of $(\cdot, \cdot)_{\text{Lie}}$ under the quotient map $P_1 \to P_1/L(-1)P_0$.

The Lie bracket on $\mathfrak{m}$ is given by $$[u + R, v + R]:=u_0v+R,$$ for $u, v\in P_1$. The vertex operators $Y(u,z)$ for $u\in P_1$ are tensor products of the vertex operators for $V^\natural$ and $V_{1,1}$: {Given $a\in V^\natural$ and $b\in V_{1,1}$, \begin{equation}\label{tensor}
    Y(a\otimes b, x)=Y(a, x)\otimes Y(b, x)
\end{equation}} as in {\cite{FHL}, Section 2.5 and \cite{DL}, Chapter 10.} %

The {vertex} operators for $V^\natural$ are given in \cite{FLM}, Chapter 12 (using also Chapters  8, 9, 10). {See Section \ref{prelim} for a review of the vertex operators for $V_{1,1}$.}

\section{Primary vectors}\label{primary1}
Recall that a vector $v$  in a conformal vertex  algebra  $V$  is called {\it primary}  if it satisfies 
$L(j)v=0$ for any $j> 0$. In addition, if $L(0)v=nv$
then $v$ is \emph{primary of weight $n$}.

The following lemma will be useful in the proof of our main theorem (Theorem \ref{main}).
\begin{lemma}\label{existprim1}
    For primary vectors $u, v \in V_{j+1}^\natural,$ we have $(u,v)_{V^\natural}{\bf{1}}=(-1)^ju_{2j+1}v,$ where $(\cdot, \cdot)_{V^\natural}$ is the bilinear form given by Proposition \ref{formdef}.
\end{lemma}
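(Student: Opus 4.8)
The plan is to apply Proposition~\ref{formdef} directly, since both $u$ and $v$ lie in $V^\natural_{j+1}$ and hence have equal weight $j+1$. The formula there gives
$$(u,v)_{V^\natural}\mathbf{1}=-\Res_x x^{-1}Y\!\left(e^{xL(1)}(-x^{-2})^{L(0)}u,\,x^{-1}\right)v,$$
so the entire computation reduces to simplifying the first argument of $Y$ using that $u$ is primary of weight $j+1$.

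First I would use the primary condition $L(k)u=0$ for all $k>0$, in particular $L(1)u=0$, to collapse the exponential: $e^{xL(1)}u=u$. Next, since $L(0)u=(j+1)u$, the definition of $(-x^{-2})^{L(0)}$ recorded in Section~\ref{voa} yields $(-x^{-2})^{L(0)}u=(-1)^{j+1}x^{-2j-2}u$. Combining these, the first argument of $Y$ becomes $(-1)^{j+1}x^{-2j-2}u$, and by linearity of $Y$ in its first slot one may extract this scalar coefficient, giving
$$(u,v)_{V^\natural}\mathbf{1}=-(-1)^{j+1}\Res_x x^{-2j-3}\,Y(u,x^{-1})v.$$

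The remaining step is a formal residue extraction. Expanding $Y(u,x^{-1})=\sum_{n\in\mathbb{Z}}u_n x^{n+1}$, the integrand becomes $\sum_{n}u_n v\,x^{n-2j-2}$, and $\Res_x$ selects the coefficient of $x^{-1}$, that is, the term with $n=2j+1$, leaving $u_{2j+1}v$. Finally, simplifying the sign $-(-1)^{j+1}=(-1)^{j}$ yields $(u,v)_{V^\natural}\mathbf{1}=(-1)^j u_{2j+1}v$, as claimed.

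I do not anticipate a genuine obstacle: the argument is a direct substitution into Proposition~\ref{formdef} together with bookkeeping of the formal variable. The only points requiring care are the correct reading of $(-x^{-2})^{L(0)}$ on a weight-$(j+1)$ vector (producing the factor $(-1)^{j+1}$, not $(-1)^{j}$) and the index shift in $Y(u,x^{-1})=\sum_{n}u_n x^{n+1}$, both of which feed into the final sign. One should also confirm that the normalization in force is the one with $(\mathbf{1},\mathbf{1})=-1$, which is exactly the form $(\cdot,\cdot)_{V^\natural}$ fixed in Section~\ref{voa} and to which Proposition~\ref{formdef} applies.
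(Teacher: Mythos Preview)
Your proof is correct and is precisely the computation the paper has in mind: its own proof consists of the single sentence ``This follows from the definition of a primary vector and the formula for the bilinear form given by Proposition~\ref{formdef},'' and your argument is exactly the unpacking of that sentence. The residue bookkeeping and sign tracking you carry out are accurate.
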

\begin{proof}
This follows from the definition of a primary vector and the formula for the bilinear form given by Proposition \ref{formdef}.
\end{proof}

Given  the conformal vector $\omega^\natural$ of $V^\natural$ and { $\omega^{1,1}:=\omega$ where $\omega$ is defined by Equation \eqref{confV11}}, $\omega^\natural\otimes{\bf 1}+{\bf 1}\otimes \omega^{1,1}$ is a conformal vector in $V^\natural\otimes V_{1,1}$. We refer the reader to {\cite{DL}} for details about tensor products of vertex algebras. {We note that although the base field in \cite{DL} is $\mathbb{C},$ the formulas and results we will use hold over $\mathbb{R}$.}

\begin{lemma}
    If $x\in V^\natural$ is primary of weight $n$ and $y\in V_{1,1}$ is primary of weight $m$ then $x\otimes y$ is a primary vector in $V^\natural\otimes V_{1,1}$ of weight $n+m.$
\end{lemma}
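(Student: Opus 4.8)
The plan is to reduce the statement to the decomposition of the Virasoro modes of the tensor-product conformal vector into the modes of the two tensor factors. The crucial input is the tensor-product formula \eqref{tensor} together with the fact that the vacuum vector acts as the identity, i.e.\ $Y({\bf 1}, x)=\mathrm{Id}$ on each factor. Writing $L^\natural(j)$ and $L^{1,1}(j)$ for the Virasoro modes of $V^\natural$ and $V_{1,1}$ respectively, the goal is to show that the modes $L(j)$ of the tensor-product conformal vector $\omega=\omega^\natural\otimes{\bf 1}+{\bf 1}\otimes\omega^{1,1}$ split as $L(j)=L^\natural(j)\otimes\mathrm{Id}+\mathrm{Id}\otimes L^{1,1}(j)$.

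First I would expand the vertex operator of $\omega$ using linearity and \eqref{tensor}:
$$Y(\omega, x)=Y(\omega^\natural\otimes{\bf 1}, x)+Y({\bf 1}\otimes\omega^{1,1}, x)=Y(\omega^\natural, x)\otimes\mathrm{Id}+\mathrm{Id}\otimes Y(\omega^{1,1}, x),$$
where in each summand I used that $Y({\bf 1}, x)=\mathrm{Id}$. Reading off the coefficient of $x^{-j-2}$ on both sides then gives the mode decomposition $L(j)=L^\natural(j)\otimes\mathrm{Id}+\mathrm{Id}\otimes L^{1,1}(j)$ for every $j\in\mathbb{Z}$. With this identity in hand, the rest is a direct computation on $x\otimes y$. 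For $j>0$, since $x$ and $y$ are primary we have $L^\natural(j)x=0$ and $L^{1,1}(j)y=0$, so
$$L(j)(x\otimes y)=(L^\natural(j)x)\otimes y+x\otimes(L^{1,1}(j)y)=0.$$
For $j=0$, since $x$ has weight $n$ and $y$ has weight $m$, we get $L(0)(x\otimes y)=(nx)\otimes y+x\otimes(my)=(n+m)(x\otimes y)$. Together these show that $x\otimes y$ is primary of weight $n+m$, as claimed.

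The main (and only minor) obstacle is justifying the mode decomposition carefully: one must verify that the tensor-product formula \eqref{tensor} applies to $\omega^\natural\otimes{\bf 1}$ and ${\bf 1}\otimes\omega^{1,1}$ and that $Y({\bf 1}, x)$ is the identity in each factor, all over the base field $\mathbb{R}$. This is a standard feature of tensor products of conformal vertex algebras, established in \cite{FHL} Section 2.5 and \cite{DL} Chapter 10, and the earlier remark that those formulas hold over $\mathbb{R}$ covers exactly this point. Once the decomposition $L(j)=L^\natural(j)\otimes\mathrm{Id}+\mathrm{Id}\otimes L^{1,1}(j)$ is in place, no further subtlety arises and the result follows immediately from the definition of a primary vector of a given weight.
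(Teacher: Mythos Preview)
Your proof is correct and is exactly the standard argument: decompose the Virasoro modes of the tensor-product conformal vector as $L(j)=L^\natural(j)\otimes\mathrm{Id}+\mathrm{Id}\otimes L^{1,1}(j)$ via \eqref{tensor} and the identity $Y({\bf 1},x)=\mathrm{Id}$, and then read off the primary and weight conditions termwise. The paper states this lemma without proof, treating it as a well-known consequence of the tensor-product construction in \cite{FHL} and \cite{DL}; your write-up supplies precisely the routine verification the paper omits.
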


\begin{lemma}(\cite{FLM}, Remark 8.7.8)
    If $\iota(c)\in V_{1,1}$ such that $c\in \widehat{L}$ with $\bar{c}=(m,n)$, then $\iota(c)$ is primary of weight $-mn.$
\end{lemma}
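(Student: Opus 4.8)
The plan is to verify the two defining properties of a primary vector of weight $-mn$ directly from the explicit conformal vector $\omega=\omega^{1,1}$ of $V_{1,1}$ given in \eqref{confV11} together with the grading formula \eqref{grading}. The weight is immediate: applying \eqref{grading} with no oscillator factors (the empty product, $k=0$) gives $L(0)\iota(c)=\tfrac12\langle\bar c,\bar c\rangle\,\iota(c)$. Since the form on $L=\textrm{II}_{1,1}$ has Gram matrix $\left(\begin{smallmatrix} 0 & -1\\ -1 & 0\end{smallmatrix}\right)$, for $\bar c=(m,n)$ one computes $\langle\bar c,\bar c\rangle=-2mn$, hence $L(0)\iota(c)=-mn\,\iota(c)$, confirming that $\iota(c)$ has weight $-mn$.

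It remains to show $L(j)\iota(c)=0$ for every $j>0$. First I would extract the modes $L(j)$ from $Y(\omega,x)$. Applying the vertex-operator formula \eqref{verteop} to $\omega=\tfrac14\alpha(-1)^2\iota(1)+\tfrac14\beta(-1)^2\iota(1)$, and using that $\iota(1)$ is the vacuum so $Y(\iota(1),x)=\mathrm{id}$, yields
\begin{equation*}
Y(\omega,x)=\tfrac14\,{}_\circ^\circ\alpha(x)^2\,{}_\circ^\circ+\tfrac14\,{}_\circ^\circ\beta(x)^2\,{}_\circ^\circ,
\end{equation*}
where $\alpha(x)=\sum_{k}\alpha(k)x^{-k-1}$ as in \eqref{vertexoph}, and similarly for $\beta$. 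Reading off the coefficient of $x^{-j-2}$ gives
\begin{equation*}
L(j)=\tfrac14\sum_{m+n=j}{}_\circ^\circ\alpha(m)\alpha(n)\,{}_\circ^\circ+\tfrac14\sum_{m+n=j}{}_\circ^\circ\beta(m)\beta(n)\,{}_\circ^\circ.
\end{equation*}

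Finally I would apply $L(j)$ to $\iota(c)$ for $j>0$. The key inputs are that positive modes annihilate $\iota(c)$, i.e. $\alpha(k)\iota(c)=\beta(k)\iota(c)=0$ for $k>0$ (as $\iota(c)\in\mathbb R\{L\}$ is a lowest-weight vector for $\widehat{\mathfrak h}_{\mathbb Z}$), while the zero modes act as scalars, $\alpha(0)\iota(c)=\langle\alpha,\bar c\rangle\iota(c)$ by \eqref{hacts}. For a summand with $m+n=j>0$, at least one index is strictly positive; after normal ordering the corresponding annihilation operator stands rightmost and acts first on $\iota(c)$, killing the term. The only case needing care is when one index equals $0$ (say $m=0,\,n=j$): the positive mode $\alpha(j)$ still occurs and $\alpha(0)$ merely contributes a scalar, so the term vanishes regardless of the ordering of the two operators. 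Moreover $[\alpha(m),\alpha(n)]=\langle\alpha,\alpha\rangle\,m\,\delta_{m+n,0}=0$ whenever $j\neq 0$, so normal ordering introduces no anomalous constant. Hence every summand is zero and $L(j)\iota(c)=0$ for all $j>0$, so $\iota(c)$ is primary. The only genuinely delicate point is this boundary bookkeeping at the zero mode; the remainder is a routine reading-off of modes.
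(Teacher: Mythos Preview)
Your argument is correct. The paper does not give its own proof of this lemma; it simply cites \cite{FLM}, Remark~8.7.8, and leaves the verification implicit. Your proposal supplies exactly the expected details: the weight computation follows immediately from \eqref{grading} and the Gram matrix of $\textrm{II}_{1,1}$, and the annihilation $L(j)\iota(c)=0$ for $j>0$ is the standard lowest-weight-vector argument for lattice vacua, with the normal-ordering bookkeeping handled correctly (in particular, your observation that $[\alpha(m),\alpha(n)]=0$ when $m+n=j\neq 0$ disposes of any ordering ambiguity). One minor wording issue: you say $\iota(c)$ is a lowest-weight vector ``for $\widehat{\mathfrak h}_{\mathbb Z}$'', but in the paper's conventions $\mathbb R\{L\}$ is declared a \emph{trivial} $\widehat{\mathfrak h}_{\mathbb Z}$-module, so the positive modes act trivially on the $\mathbb R\{L\}$ factor and annihilate the $S(\widehat{\mathfrak h}^-)$ factor $1$; the conclusion is the same, but the justification is slightly different from a genuine lowest-weight condition.
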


We denote the subspace of primary vectors of weight $j$ in $V=V^\natural\otimes V_{1,1}$ by $P_{j}$ and the subspace of primary vectors of weight $j$ in $V^\natural$ by $P^\natural_{j}.$

\begin{lemma}\label{submodprim}Let $j\ge 0$. Then $P^\natural_{j}$ is an  $\mathbb{M}$-submodule of $V^\natural_{j}$.
\end{lemma}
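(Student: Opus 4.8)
The plan is to exploit the fact, recalled in the excerpt, that $\M=\Aut(V^\natural)$ acts on $V^\natural$ by vertex operator algebra automorphisms in the sense of \cite{FLM}, \cite{FHL}. The only structural input needed is that every such automorphism $g$ fixes the conformal vector, $g\,\omega^\natural=\omega^\natural$, and satisfies the intertwining identity $g\,Y(v,x)w=Y(gv,x)\,gw$ for all $v,w\in V^\natural$. From these two facts the whole statement will follow almost immediately: primarity is a purely Virasoro-theoretic condition, and an automorphism fixing $\omega^\natural$ commutes with every Virasoro mode.

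Concretely, I would first record that for $g\in\M$ and any $v,w\in V^\natural$, applying the intertwining identity with $v=\omega^\natural$ and using $g\,\omega^\natural=\omega^\natural$ gives $g\,Y(\omega^\natural,x)w=Y(g\,\omega^\natural,x)\,gw=Y(\omega^\natural,x)\,gw$. Since $Y(\omega^\natural,x)=\sum_{k\in\Z}L(k)x^{-k-2}$, comparing the coefficient of $x^{-k-2}$ on both sides yields $g\,L(k)=L(k)\,g$ for every $k\in\Z$. Thus each $g\in\M$ commutes with all Virasoro modes acting on $V^\natural$.

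Next I would use this commutation to check that $\M$ preserves the primary condition. Fix $j\ge 0$ and let $v\in P^\natural_{j}$, so that $L(0)v=jv$ and $L(k)v=0$ for all $k>0$. For $g\in\M$ and $k>0$ we then have $L(k)(gv)=g\,L(k)v=0$, while $L(0)(gv)=g\,L(0)v=j(gv)$. Hence $gv$ is again primary of weight $j$, i.e.\ $gv\in P^\natural_{j}$, and in particular $gv\in V^\natural_{j}$ since $P^\natural_{j}\subseteq V^\natural_{j}$. Therefore $\M\cdot P^\natural_{j}\subseteq P^\natural_{j}$, and as $P^\natural_{j}$ is a subspace of the $\M$-module $V^\natural_{j}$ (itself preserved because $g$ commutes with $L(0)$), it is an $\M$-submodule, which is exactly the claim.

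There is no serious obstacle here; the argument is essentially a bookkeeping consequence of the automorphism axioms. The only point that warrants care is the precise convention for $\M$ acting as automorphisms of $V^\natural$: one must invoke that the Monster action of \cite{FLM} fixes the conformal vector $\omega^\natural$ (equivalently, that it preserves the conformal-weight grading and the Virasoro action), which is exactly what licenses the commutation $g\,L(k)=L(k)\,g$. Once that convention is stated, the verification of primarity is routine.
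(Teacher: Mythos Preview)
Your proposal is correct and follows essentially the same approach as the paper: both use that automorphisms of $V^\natural$ fix the conformal vector $\omega^\natural$, deduce that the $\M$-action commutes with every Virasoro mode $L(n)$, and then verify directly that the primary condition is preserved. Your derivation of $g\,L(k)=L(k)\,g$ from the intertwining identity is slightly more explicit than the paper's, but the content is identical.
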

\begin{proof}%
Recall that  $\M$ acts on $V^\natural$ by vertex operator algebra automorphisms. Such automorphisms fix the conformal vector $\omega$ (\cite{FLM} (8.10.21)). Therefore, given any $v\in V^\natural$ and any $g\in \M$ we have, for all $n\in \mathbb{Z},$
\begin{equation*}g\cdot(L(n)v)
=L(n)g\cdot v.\end{equation*} That is, the action of $\M$ on $V^\natural$ commutes with the action of the Virasoro algebra.
In particular, if $v\in P_{j}^\natural$,
\begin{equation*}
    g\cdot (L(0)v)=g\cdot (jv)=j(g\cdot v)=L(0)g\cdot v
\end{equation*}
\begin{equation*}
     g\cdot L(j)v=0=L(j)g\cdot v,
\end{equation*}
for all $j>0$ and all $g\in \M.$
\end{proof}

Proposition \ref{Liinj} below is due to Li \cite{Li}. Generalizations of Proposition \ref{Liinj} and Proposition \ref{surj} for twisted modules were given in \cite{DLM}. In both \cite{Li} and \cite{DLM}, the base field is $\mathbb{C}$. {The proofs in \cite{Li} and \cite{DLM} which hold over $\mathbb{C}$, are included for completeness, to verify that they hold over $\mathbb{R}$.}

We will use the notion of a vertex operator algebra module as in \cite{FLM} and \cite{FHL}. In particular, given a vertex operator algebra $V=\oplus_{i\in \mathbb{Z}}V_i,$ a $V$-module $M$ is a $\mathbb{Q}$-graded vector space \[M=\oplus_{i\in \mathbb{Q}}M_i\] such that for $n\in \mathbb{Q}$ we have
$\dim M_n<\infty$, $M_n=0$ for $n$ sufficiently small, and there is a linear map
\begin{align*}
V&\rightarrow (\End M)[[x, x^{-1}]],&
v&\mapsto Y_{M}(v, x)=\sum_{n\in \mathbb{Z}}v_nx^{-n-1},
\end{align*}
where $v_n\in \End M,$ and $Y_{M}(v, x)$ denotes the vertex operator associated to $v$, such that all of the defining properties of a vertex operator algebra that make sense hold (\cite{FHL}, Definition 4.1). {Note that we use $v_n$ to denote both the coefficient of $x^{-n-1}$ in the vertex operator $Y(v, x)$ associated with $V$ and the coefficient of $x^{-n-1}$ in the vertex operator $Y_M(v, x)$ associated with the $V$-module $M$.}

The following properties in the definition of a $V$-module $M$ will be used in the proof of Proposition \ref{Liinj}:

We have 
\begin{equation*}
[L(m), L(n)]=(m-n)L(m+n)+\frac{1}{12}(m^3-m)\delta_{m+n, 0}(\text{rank }V)
\end{equation*}
where
\begin{equation*}
Y_M(\omega, x)=\sum_{n\in \mathbb{Z}}\omega_n x^{-n-1}=\sum_{n\in \mathbb{Z}}L(n)x^{-n-2}.
\end{equation*}
Given $n\in \mathbb{Q}$ and $w\in M_n,$
\begin{equation*}
L(0)w=nw.
\end{equation*}
Given any $v\in V$ and $u\in M,$
\begin{equation}\label{sufflarge}
v_nu=0
\end{equation} 
for $n$ sufficiently large.
For all $v\in V,$
\begin{equation}\label{L(-1)mod}\frac{d}{dx}Y_{M}(v, x)=Y_{M}(L(-1)v, x),\end{equation} and 
\begin{equation}\label{L(-1)brack}[L(-1), Y_{M}(v, x)]=Y_{M}(L(-1)v, x),\end{equation} {where on the right hand side of Equations \eqref{L(-1)mod} and \eqref{L(-1)brack}, $L(-1)$ acts on $V$.}

We will need the following definition from \cite{Li}, (see also \cite{DLM}). 
Let $V=\oplus_{i\in \mathbb{Z}}V_i$ be a vertex operator algebra and $M=\oplus_{i\in \mathbb{Q}}M_i$ a $V$-module. A vector $u\in M$ is called {\emph{vacuum-like}} if \begin{equation}\label{vac}a_nu=0\end{equation}for all $a\in V$ and for all $n\ge 0.$

\begin{remark}\label{vacuumrem} As is remarked in \cite{DLM}, given a vacuum-like vector $u$, taking $a=\omega$ and $n=1$ in Equation \ref{vac}, we have $\omega_1 u=L(0)u=0.$ Therefore, all vacuum-like vectors are contained in $M_0.$\end{remark}

\begin{proposition}\label{Liinj}\cite{Li}, \cite{DLM} Let $V=\oplus_{i\in \mathbb{Z}}V_i$ be a vertex operator algebra and let $M=\bigoplus_{i\in \mathbb{Q}} M_i$ be a $V$-module. Then the map from $M_{j}$ to $M_{j+1}$ given by $v \mapsto L(-1)v$ is injective unless $j=0$. In particular, the map from $V_j$ to $V_{j+1}$ given by $v\mapsto L(-1)v$ is injective unless $j=0$.
\end{proposition}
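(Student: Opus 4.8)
The plan is to reduce the statement to the claim that the kernel of $L(-1)$ on $M$ lies entirely in $M_0$. Since the Virasoro relation gives $[L(0),L(-1)]=L(-1)$, the operator $L(-1)$ sends $M_j$ into $M_{j+1}$, so it suffices to show that if $v\in M_j$ with $j\ne 0$ and $L(-1)v=0$ then $v=0$. For this I would prove the implication that any $v$ with $L(-1)v=0$ is \emph{vacuum-like} in the sense of \eqref{vac}; granting this, Remark~\ref{vacuumrem} places $v$ in $M_0$, whence $v\in M_0\cap M_j=0$ for $j\ne 0$, giving injectivity. The final sentence is the special case $M=V$, viewing $V$ as a module over itself.

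Thus the entire content is the implication $L(-1)v=0\Rightarrow a_nv=0$ for all $a\in V$ and all $n\ge 0$. Fixing $a\in V$, I would study the series $f(x):=Y_M(a,x)v=\sum_{n\in\mathbb Z}(a_nv)\,x^{-n-1}$, the point being to show it has no strictly negative powers of $x$, which is exactly the condition $a_nv=0$ for $n\ge 0$. Combining \eqref{L(-1)mod} and \eqref{L(-1)brack} yields the operator identity $\frac{d}{dx}Y_M(a,x)=[L(-1),Y_M(a,x)]$; applying both sides to $v$ and using $L(-1)v=0$ to kill the term $Y_M(a,x)L(-1)v$ gives the key relation
\begin{equation*}
\frac{d}{dx}f(x)=L(-1)f(x).
\end{equation*}

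Writing $f(x)=\sum_m c_m x^m$ with $c_m=a_{-m-1}v$ and comparing coefficients, this relation becomes the recursion $(m+1)c_{m+1}=L(-1)c_m$ for all $m$. Here the truncation axiom \eqref{sufflarge} is essential: because $a_nv=0$ for $n$ large, we have $c_m=0$ for $m$ sufficiently negative, so $f$ is a Laurent series bounded below and, if $f\ne 0$, has a well-defined lowest nonzero coefficient $c_{n_0}$. Taking $m=n_0-1$ in the recursion gives $n_0\,c_{n_0}=L(-1)c_{n_0-1}=0$, and since $c_{n_0}\ne 0$ this forces $n_0=0$; hence $f$ contains no negative powers of $x$, i.e.\ $a_nv=0$ for every $n\ge 0$. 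As $a$ was arbitrary, $v$ is vacuum-like, completing the argument. The one place requiring care—and the step I expect to be the main obstacle to phrase cleanly rather than to carry out—is justifying that $f$ is genuinely bounded below so that the ``lowest term'' is meaningful and the coefficientwise comparison is legitimate; this is precisely where \eqref{sufflarge} converts the otherwise purely formal lowest-weight computation into a rigorous one.
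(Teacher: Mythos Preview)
Your proposal is correct and follows essentially the same approach as the paper: both show that $L(-1)v=0$ forces $v$ to be vacuum-like by combining \eqref{L(-1)mod} and \eqref{L(-1)brack} into a single commutator identity, then use the truncation axiom \eqref{sufflarge} to isolate a lowest (equivalently, highest-mode) nonzero term and deduce that its index must be $-1$. The only cosmetic difference is packaging: the paper works directly with the mode identity $[L(-1),v_{k+1}]=-(k+1)v_k$ applied at the top nonvanishing mode, while you phrase the same computation as the differential equation $f'(x)=L(-1)f(x)$ and read off the recursion $(m+1)c_{m+1}=L(-1)c_m$; these are the same calculation in different notation.
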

\begin{proof}Let $u\in M$ such that $L(-1)u=0.$ Let $v\in V$ such that $Y(v, x)u\ne 0.$ By Equation~\eqref{sufflarge} there exists $k\in \mathbb{Z}$ such that $v_ku\ne 0$ and $v_nu=0$ for all $n>k.$ Let such a $k\in \mathbb{Z}$ be given. Combining Equations~\eqref{L(-1)mod} and~\eqref{L(-1)brack},
we have
\[[L(-1), Y_M(v, x)]=\frac{d}{dx}Y_M(v, x),\] so that in particular,
\[[L(-1), v_{k+1}]=-(k+1)v_k.\] Thus, 
\[0=L(-1)v_{k+1}u-v_{k+1}L(-1)u=-(k+1)v_ku.\] Since $v_ku\ne 0,$ this implies that $k=-1.$ Therefore, given any $v\in V,$ $v_ku=0$ for all $k\ge 0.$ That is, $u$ is a vacuum-like vector. The first assertion of the proposition then follows from Remark \ref{vacuumrem}. The second assertion follows from the first assertion and the fact that every vertex operator algebra is a module for itself.  
\end{proof}

In order to give the proof of Proposition \ref{surj}, we first  review the definition of a contragradient module as in \cite{FHL}. Let $V=\oplus_{i\in \mathbb{Z}}V_i$ be a vertex operator algebra and let $M=\oplus_{i\in \mathbb{Q}}M_i$ be a $V$-module.  For $n\in \mathbb{Q}$, let $M_n^*$ denote the dual space of $M_n$ and define \[M'=\oplus_{n\in \mathbb{Q}}M_n^*.\] That is, $M'$ is the space of linear functionals $f:M\rightarrow \mathbb{R}$ vanishing on all but finitely many $M_n.$
Let $\langle\cdot ,\cdot\rangle$ denote the natural pairing between $M$ and $M'$ so that, given $f\in M^*$ and $v\in M,$ \[\langle f, v\rangle =f(v).\] Define adjoint vertex operators $Y'(v, x)$ for $v\in V$ by the linear map \begin{align*}
V&\rightarrow (\End M')[[x, x^{-1}]],\\v&\mapsto Y'(v, x)=\sum_{n\in \mathbb{Z}}v_n' x^{-n-1}
\end{align*} 
determined by the condition
\begin{equation}\label{adjcon}
\langle Y'(v,x)w', w\rangle=\langle w', Y(e^{xL(1)}(-x^{-2})^{L(0)}v, x^{-1})w\rangle
\end{equation} for all $v\in V$, $w'\in M',$ and $w\in M.$ We give $M'$ a $\mathbb{Q}$-grading by defining $M_n'=M_n^*$ for $n\in \mathbb{Q}.$ By \cite{FHL} Theorem 5.2.1, {the linear map defined by $v\mapsto Y'(v,x)$ and the $\mathbb{Q}$-grading give $M'$ the structure of a $V$-module. The $V$-module $M'$ is }called the {\emph{$V$-module contragradient to $M$.}}

We recall Proposition 5.3.1 in \cite{FHL}.
\begin{proposition}\label{doubledualiso}\cite{FHL}
Let $V=\oplus_{i\in \mathbb{Z}} V_i$ be a vertex operator algebra and let $M=\oplus_{i\in \mathbb{Q}}M_i$ be a $V$-module. Then there are natural identifications between the double-contragredient $M''$ and $M$, and between the double-adjoint operator $Y''(v, x)$ and $Y(v, x).$
\end{proposition}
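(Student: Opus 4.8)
The plan is to prove both assertions simultaneously by producing an explicit isomorphism of $V$-modules $\phi\colon M\to M''$ and verifying that it carries $Y(v,x)$ to $Y''(v,x)$. First I would set up the underlying linear identification. By the grading restriction conditions in the definition of a $V$-module, each $M_n$ is finite dimensional, so the graded dual satisfies $M'_n=M_n^*$ and $M''_n=(M'_n)^*=M_n^{**}$. The canonical evaluation map $\phi\colon M_n\to M_n^{**}$, $w\mapsto \hat w$ with $\hat w(f)=\langle f,w\rangle$ for $f\in M'$, is an isomorphism precisely because $\dim M_n<\infty$; summing over $n$ yields a natural, weight-preserving isomorphism $\phi\colon M\to M''$. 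This $\phi$ is the candidate identification between $M''$ and $M$.

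Next I would reduce the claim $Y''=Y$ to a single operator identity. Abbreviate $A(x)=e^{xL(1)}(-x^{-2})^{L(0)}$, the operator appearing in the adjoint condition \eqref{adjcon}. Writing out \eqref{adjcon} for $M''$ (the contragredient of $M'$) and then again for $M'$ (the contragredient of $M$), with the formal variable inverted each time, I get, for $w\in M$ and $w'\in M'$,
\[
\langle Y''(v,x)\hat w,\,w'\rangle=\langle \hat w,\,Y'(A(x)v,x^{-1})w'\rangle=\langle w',\,Y\bigl(A(x^{-1})A(x)\,v,\,x\bigr)w\rangle .
\]
Everything therefore hinges on showing
\[
A(x^{-1})A(x)=e^{x^{-1}L(1)}(-x^{2})^{L(0)}e^{xL(1)}(-x^{-2})^{L(0)}=\mathrm{id}.
\]
I would prove this from the relation $[L(0),L(1)]=-L(1)$, which gives the conjugation formula $a^{L(0)}L(1)a^{-L(0)}=a^{-1}L(1)$; using it with $a=-x^{2}$ lets me commute $(-x^{2})^{L(0)}$ past $e^{xL(1)}$, after which the two $L(1)$-exponentials cancel and the leftover scaling operators combine as $(-x^{2})^{L(0)}(-x^{-2})^{L(0)}=(-1)^{2L(0)}=\mathrm{id}$, the last equality holding because $L(0)$ has integer eigenvalues on $V$.

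Granting the operator identity, the displayed computation collapses to $\langle Y''(v,x)\hat w,w'\rangle=\langle w',Y(v,x)w\rangle=\langle\widehat{Y(v,x)w},w'\rangle$ for every $w'\in M'$. Since the pairing between $M''$ and $M'$ is nondegenerate (again by finite dimensionality of the graded pieces), this forces $Y''(v,x)\hat w=\widehat{Y(v,x)w}$, i.e. $\phi\, Y(v,x)=Y''(v,x)\,\phi$; combined with the first paragraph this says $\phi$ is an isomorphism of $V$-modules identifying $Y''$ with $Y$, as desired. The main obstacle is the operator identity: one must track the $(-1)^{L(0)}$ factors and apply the conjugation formula in the correct direction, all while the formal variable $x$ is inverted at each application of \eqref{adjcon}, so the bookkeeping is where errors are easy to make.
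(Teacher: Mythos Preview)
Your argument is correct and is essentially the standard proof from \cite{FHL}, Proposition~5.3.1: use finite-dimensionality of the graded pieces to identify $M$ with $M''$ via the canonical evaluation map, unwind the adjoint condition twice, and reduce to the operator identity $e^{x^{-1}L(1)}(-x^{2})^{L(0)}e^{xL(1)}(-x^{-2})^{L(0)}=\mathrm{id}$ on $V$, which follows from $[L(0),L(1)]=-L(1)$ and the integrality of the $L(0)$-grading on $V$. Note, however, that the present paper does not supply a proof of this proposition at all; it simply recalls it from \cite{FHL}, so there is no independent argument here to compare against.
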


\begin{proposition}\label{surj}\cite{DLM} Let $V=\oplus_{i\in \mathbb{Z}} V_i$ be a vertex operator algebra and let $M=\oplus_{i\in \mathbb{Q}}M_i$ be a $V$-module. Then the map from $M_j$ to $M_{j-1}$ given by $v\mapsto L(1)v$ is surjective unless $j=1$.
In particular, the map from $V_j$ to $V_{j-1}$ given by $v\mapsto L(1)v$ is surjective unless $j=1$.\end{proposition}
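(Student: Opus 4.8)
The plan is to deduce this surjectivity statement from the injectivity result of Proposition~\ref{Liinj} by passing to the contragredient module, exploiting the duality between the operators $L(-1)$ and $L(1)$. The key observation is that the adjoint-operator condition \eqref{adjcon} relates $L(1)$ acting on $M$ to $L(-1)$ acting on the contragredient module $M'$. First I would extract from \eqref{adjcon} the precise statement that the adjoint $L(1)'$ of the operator $L(1)$ on $M$ coincides (up to the grading sign conventions built into $(-x^{-2})^{L(0)}$) with $L(-1)$ acting on $M'$. Concretely, since $L(1)=\omega_2$ and $L(-1)=\omega_0$, comparing coefficients of $x^{-n-1}$ in \eqref{adjcon} with $v=\omega$ should identify the transpose of the map $L(1)\colon M_j\to M_{j-1}$ with the map $L(-1)\colon M'_{j-1}\to M'_j$.

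Once that duality is in hand, the argument is purely linear-algebraic. A linear map between finite-dimensional spaces is surjective if and only if its transpose is injective. By Proposition~\ref{Liinj} applied to the $V$-module $M'$, the map $L(-1)\colon M'_{j-1}\to M'_j$ is injective unless $j-1=0$, i.e.\ unless $j=1$. Since $M_j$ and $M_{j-1}$ are finite-dimensional (by the module axioms), and the transpose of $L(1)\colon M_j\to M_{j-1}$ is exactly this injective map $L(-1)$ on $M'$, I conclude that $L(1)\colon M_j\to M_{j-1}$ is surjective whenever $j\neq 1$. The second assertion then follows by taking $M=V$, since every vertex operator algebra is a module over itself.

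The main obstacle I anticipate is bookkeeping with the grading and sign conventions in the adjoint formula \eqref{adjcon}: one must verify carefully that when $v=\omega$ (a weight-$2$ vector with $L(1)\omega=0$), the operator $e^{xL(1)}(-x^{-2})^{L(0)}\omega$ simplifies to $(-x^{-2})^{2}\omega=x^{-4}\omega$, and then track which mode of $Y(\omega,x^{-1})$ produces the $L(1)$ versus $L(-1)$ contribution after the substitution $x\mapsto x^{-1}$. The cleanest route is to isolate the single relevant coefficient rather than to analyze the full operator $Y'(\omega,x)$: I would compute $\langle Y'(\omega,x)w',w\rangle$ for $w'\in M'_{j-1}$ and $w\in M_j$ and read off that the component of $Y'(\omega,x)$ sending $M'_{j-1}$ into $M'_j$ is precisely $L(-1)$ on $M'$, with the $e^{xL(1)}$ factor contributing only to strictly lower-order terms that vanish on this graded piece. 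With this identification established, and invoking the natural identification $M''\cong M$ of Proposition~\ref{doubledualiso} to ensure no information is lost in passing to the dual, the reduction to Proposition~\ref{Liinj} is immediate.
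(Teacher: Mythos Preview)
Your proposal is correct and follows essentially the same route as the paper's proof: both establish the adjointness relation $\langle L(j)w',w\rangle = \langle w', L(-j)w\rangle$ from \eqref{adjcon} with $v=\omega$, apply Proposition~\ref{Liinj} to the contragredient module $M'$, and conclude surjectivity of $L(1)$ on $M$ via finite-dimensional duality together with the identification $M''\cong M$ from Proposition~\ref{doubledualiso}. Your anticipated bookkeeping (using $L(1)\omega=0$ and $L(0)\omega=2\omega$ to simplify $e^{xL(1)}(-x^{-2})^{L(0)}\omega$) is exactly the computation the paper carries out.
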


\begin{proof}
Given $j\in \mathbb{Z},$ and taking $v=\omega$ in Equation \eqref{adjcon}, we have
\begin{eqnarray*}{
\langle L(j)w', w\rangle }&=& \Res_x x^{j+1}\langle Y'(\omega, x)w', w\rangle = \Res_x x^{j+1}\langle w', Y(w^{xL(1)}(-x^{-2})^{L(0)}\omega, x^{-1})w\rangle \\&=&\langle w', L(-j)w\rangle.
\end{eqnarray*} Therefore, given any $V$-module $W=\oplus_{i\in \mathbb{Q}}W_i,$ the map from $W_i$ to $W_{i+j}$ given by $v\mapsto L(-j)v$
is dual to the map from
$W_{i+j}'$ to $ W_{i}'$ given by $v'\mapsto L(j)v'.$

Now take $W$ to be the contragradient module $M'$ of $M$. By Proposition \ref{Liinj}, the map from $M'_{j}$ to $M'_{j+1}$ given by $v'\mapsto L(-1)v'$ is injective unless $j=0.$ The dual of an injective map is surjective, so the map from $M_{j+1}''$ to $M_{j}''$ given by $v''\mapsto L(1)v''$ is surjective. Since the homogeneous subspaces of a $V$-module are finite dimensional, $M_{i}''=M_{i}$ for all $i\in\mathbb{Q}.$ By Theorem \ref{doubledualiso}, there is a natural identification between $Y''(\omega, x)$ and $Y(\omega, x)$. Thus, the map from $M_j$ to $M_{j-1}$ given by $v\mapsto L(1)$ is surjective unless $j=1$.
\end{proof}

{Recall that since $P_j^\natural$ is the subspace of $V^\natural_j$ consisting of all weight $j$ vectors that are annihilated by $L(j)$ for all $j\ge 1$, it is a subspace of $\ker(L(1)|_{V_j^\natural}),$ which is the subspace of $V_j^\natural$ consisting of all weight $j$ vectors annihilated by $L(1)$.}

\begin{corollary}\label{proper}
 For $j> 1$, $P^\natural_{j}$ is a proper $\M$-submodule of $V^\natural_{j}$. \end{corollary}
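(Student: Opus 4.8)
The plan is to show that for $j>1$ the inclusion $P^\natural_j\subseteq V^\natural_j$ is strict; combined with Lemma~\ref{submodprim}, which already establishes that $P^\natural_j$ is an $\M$-submodule, this gives the corollary. The starting point is the observation recorded just above the statement: every primary vector is in particular annihilated by $L(1)$, so $P^\natural_j\subseteq \ker\big(L(1)|_{V^\natural_j}\big)$. Hence it suffices, for each $j>1$, to exhibit a weight-$j$ vector on which some mode $L(n)$ with $n>0$ acts nontrivially, since such a vector lies in $V^\natural_j$ but not in $P^\natural_j$.

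First I would dispose of the generic range $j\ge 3$. Here $j\neq 1$, so Proposition~\ref{surj} tells us that $L(1)\colon V^\natural_j\to V^\natural_{j-1}$ is surjective. Since $\dim V^\natural_{j-1}=c(j-2)$ and $c(m)>0$ for every $m\ge 1$, we have $V^\natural_{j-1}\neq 0$ whenever $j-1\ge 2$, that is, whenever $j\ge 3$. Surjectivity onto a nonzero space forces $L(1)|_{V^\natural_j}\neq 0$, so $\ker\big(L(1)|_{V^\natural_j}\big)$ is a proper subspace of $V^\natural_j$, and therefore $P^\natural_j\subseteq\ker\big(L(1)|_{V^\natural_j}\big)\subsetneq V^\natural_j$.

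The case $j=2$ is the one subtle point, and it is exactly where the $L(1)$-argument breaks down: because the Moonshine module has no weight-one states ($\dim V^\natural_1=c(0)=0$), the map $L(1)\colon V^\natural_2\to V^\natural_1=0$ is identically zero and its kernel is all of $V^\natural_2$. To handle this case I would pass to the next Virasoro mode and use the conformal vector itself. Writing $\omega^\natural=L(-2)\mathbf 1$, the Virasoro relations together with $L(2)\mathbf 1=0=L(0)\mathbf 1$ give $L(2)\omega^\natural=[L(2),L(-2)]\mathbf 1=\tfrac12(\mathrm{rank}\,V^\natural)\,\mathbf 1=12\,\mathbf 1\neq 0$, using that $V^\natural$ has central charge $24$. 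Thus $\omega^\natural$ is a weight-$2$ vector that is not primary, so $P^\natural_2\subsetneq V^\natural_2$.

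Finally, since $\dim V^\natural_j=c(j-1)>0$ for all $j\ge 2$, the space $V^\natural_j$ is nonzero, so the strict inclusions established above show that $P^\natural_j$ is a proper $\M$-submodule of $V^\natural_j$ for every $j>1$. The main obstacle is not any delicate computation but rather the vanishing of $V^\natural_1$, which forces the separate treatment of $j=2$; away from that degenerate weight, the surjectivity of $L(1)$ from Proposition~\ref{surj} does all the work.
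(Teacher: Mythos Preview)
Your proof is correct and follows essentially the same approach as the paper: use Proposition~\ref{surj} to handle $j\ge 3$ via surjectivity of $L(1)$ onto the nonzero space $V^\natural_{j-1}$, and treat $j=2$ separately by noting that the conformal vector $\omega^\natural\in V^\natural_2$ is not primary. Your explicit computation $L(2)\omega^\natural=12\,\mathbf 1$ is a nice elaboration of what the paper simply asserts.
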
 
\begin{proof}Let $j>1$. By Lemma \ref{submodprim}, $P_j^\natural$ is an $\M$-submodule of $V_j^\natural$. By Proposition \ref{surj}, the map from $V_j^\natural$ to $V_{j-1}^\natural$ given by $v\mapsto L(1)v$ is surjective. If $j\ne 2$, $V_{j-1}^\natural\ne \{0\},$ so $\ker(L(1)|_{V_j^\natural})\ne V_{j}^\natural$. Therefore, in this case, $\ker(L(1)|_{V_j^\natural})$ is a proper subspace of $V_j^\natural$, and since $P_j^\natural$ is a subspace of $\ker(L(1)|_{V_j^\natural})$, the module $P_j^\natural$ is a proper $\M$-submodule of $V_j^\natural.$ Finally, since the conformal vector  $\omega\in V_2^\natural$ is not primary, $P_2^\natural$ is a proper $\M$-submodule of $V_2^\natural.$
\end{proof}

\section{Vertex algebra elements for imaginary $\mathfrak{gl}_2$ subalgebras of $\mathfrak m$}\label{mainsect}

In this section we prove our main result, Theorem~\ref{main},  which is a  generalization of the following result from \cite{JLW}.

\begin{lemma}\label{JLW_lemma}\cite{JLW}  Let $L=\textrm{II}_{1,1}$, and let $a,b\in \widehat{L}$ such that  $\bar{a}=(1,1)$ and $\bar{b}=(1,-1)$. Then the elements \begin{align*}
e&={\bf 1}\otimes \iota(b)+R,&
f&={\bf 1}\otimes \iota(b^{-1})+R,\\
h&={\bf 1}\otimes\overline{b}(-1)\iota(1)+R,&
z&={\bf 1}\otimes\overline{a}(-1)\iota(1)+R
\end{align*} 
are a basis for a subalgebra $\mathfrak{gl}_2(-1)$ isomorphic to $\mathfrak{gl}_2$ in $\mathfrak m$. The subalgebra $\mathfrak{gl}_2(-1)$ is a trivial $\M$-module. The vectors $z$ and $h$ commute with each other and the elements $e,f,h$ are a basis for a subalgebra isomorphic to $\mathfrak{sl}_2$  with relations
$$[e,f]=h,\quad [h,e]=2e,\quad [h,f]=-2f.$$ 

\end{lemma}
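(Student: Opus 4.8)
The plan is to reduce every bracket computation to the lattice factor $V_{1,1}$ and then read off the $\mathfrak{gl}_2$ relations from the explicit vertex operators \eqref{vertexoph}--\eqref{V11op1}. The key simplification is that all four proposed generators carry the vacuum ${\bf 1}$ in the $V^\natural$ slot. Since the vertex operators on $V=V^\natural\otimes V_{1,1}$ factor as in \eqref{tensor} and $Y({\bf 1},x)=\mathrm{Id}_{V^\natural}$, for $u={\bf 1}\otimes u'$ and $v={\bf 1}\otimes v'$ we have $Y(u,x)v={\bf 1}\otimes Y(u',x)v'$, so the zero mode satisfies $u_0v={\bf 1}\otimes u'_0v'$, where $u'_0$ is computed entirely inside $V_{1,1}$. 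Thus the Lie bracket on $\mathfrak m=P_1/R$, restricted to these four vectors, is governed by the lattice vertex algebra alone.

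First I would check that $e,f,h,z$ actually lie in $P_1$, i.e.\ are primary of weight $1$. Since $\bar b=(1,-1)$ and $\bar a=(1,1)$ satisfy $\langle\bar b,\bar b\rangle=2$, $\langle\bar a,\bar a\rangle=-2$, $\langle\bar a,\bar b\rangle=0$, Remark~8.7.8 of \cite{FLM} gives that $\iota(b)$ and $\iota(b^{-1})$ are primary of weight $-(1)(-1)=1$ in $V_{1,1}$; tensoring with the weight-$0$ primary ${\bf 1}\in V^\natural$ keeps them primary of weight $1$ in $V$. For $h$ and $z$ the grading formula \eqref{grading} gives weight $1$, and $\bar c(-1)\iota(1)$ is primary because $L(n)\bar c(-1)\iota(1)=0$ for $n\ge 2$ by weight reasons while $L(1)\bar c(-1)\iota(1)=\bar c(0)\iota(1)=0$. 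Hence all four vectors represent classes in $P_1/R=\mathfrak m$.

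Next I would compute the brackets. For $[e,f]=e_0f$, formula \eqref{V11op1} gives
\[
Y(\iota(b),x)\iota(b^{-1})=\exp\!\Big(\sum_{n\ge 1}\tfrac{\bar b(-n)}{n}x^n\Big)\,\iota(1)\,x^{\langle\bar b,-\bar b\rangle},
\]
and since $bb^{-1}=1$ and $\langle\bar b,-\bar b\rangle=-2$, the coefficient of $x^{-1}$ is $\bar b(-1)\iota(1)$; therefore $[e,f]={\bf 1}\otimes\bar b(-1)\iota(1)=h$. The remaining brackets involve only the Heisenberg zero modes: from \eqref{vertexoph}--\eqref{verteop} the zero mode of $\bar c(-1)\iota(1)$ is $\bar c(0)$, which acts on $\iota(d)$ by the scalar $\langle\bar c,\bar d\rangle$ via \eqref{hacts}. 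Thus $[h,e]=\langle\bar b,\bar b\rangle\,e=2e$ and $[h,f]=\langle\bar b,-\bar b\rangle\,f=-2f$, while $[z,e]=\langle\bar a,\bar b\rangle\,e=0$, $[z,f]=0$, and $[h,z]=\bar b(0)\bar a(-1)\iota(1)=0$ (the degree-$0$ Heisenberg mode commutes with $\bar a(-1)$ and annihilates $\iota(1)$). These relations show that $\Span\{e,f,h,z\}$ is closed under bracket, that $\{e,f,h\}$ satisfies the $\mathfrak{sl}_2$ relations, and that $z$ is central, so the span is isomorphic to $\mathfrak{sl}_2\oplus\R z\cong\mathfrak{gl}_2$.

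Finally I would address linear independence and $\M$-triviality. The classes $e+R$ and $f+R$ lie in the distinct one-dimensional root spaces $\mathfrak m_{(1,-1)}$ and $\mathfrak m_{(-1,1)}$ and are nonzero because they pair nontrivially under $(\cdot,\cdot)_V$, while $h+R,z+R$ lie in the two-dimensional Cartan $\mathfrak m_{(0,0)}$ and are independent since $\bar a$ and $\bar b$ are independent in $\mathfrak h=L\otimes_\Z\R$; hence the four vectors form a basis of a $\mathfrak{gl}_2$ subalgebra. Because $\M$ acts on $V$ solely through its action on $V^\natural$ and fixes the vacuum ${\bf 1}$, every generator ${\bf 1}\otimes(\cdot)$ is $\M$-fixed, so $\mathfrak{gl}_2(-1)$ is a trivial $\M$-module. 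I expect the main obstacle to be the bookkeeping in the lattice factor: correctly handling the central-extension product $bb^{-1}=1$ together with its cocycle signs, extracting the precise $x^{-1}$-coefficient in \eqref{V11op1}, and confirming that no generator collapses modulo the radical $R$.
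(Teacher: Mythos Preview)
Your argument is correct. The paper does not supply its own proof of this lemma, citing \cite{JLW} instead; however, your computation is exactly the $j=-1$ specialization of the method the paper uses to prove the more general Theorem~\ref{main} (applying \eqref{V11op1} to extract the $x^{-1}$-coefficient for $[e,f]$ and the Heisenberg zero mode $\bar c(0)$ via \eqref{hacts} for the remaining brackets), and the paper itself remarks after Theorem~\ref{main} that $j=-1$ recovers $\mathfrak{gl}_2(-1)$.
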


In Theorem~\ref{main}, we  give elements in $P_1$ which project under the quotient map $P_1\to P_1/R$ to generators of $\mathfrak{gl}_2$ subalgebras for imaginary simple roots of $\frak m$.

From now on, for each $j\in \{-1,1, 2, \cdots\}$, we fix a lifting $c_j\in \widehat{L}$ of $\overline{c_j}=(1,j)\in L$, so that $\overline{{c_j}^{-1}}=(-1,-j)\in L$.

\begin{theorem}\label{main} Fix $j\in \{-1,1, 2, \cdots\}$ and {$c_j\in \widehat{L}$} such that $\overline{c_j}=(1,j)$. 
Let $u, v$ be fixed primary vectors in $V_{j+1}^\natural$ such that $u_{2j+1}v={\bf 1}$. Then $u\otimes \iota({c_j})$, $v\otimes\iota({c_j^{-1}})\in P_1$. Let $h$ and $z$ be as in Lemma \ref{JLW_lemma}. Let $t_1=(0,-1)$ and $t_2=(-1,0).$
Then
\begin{align*}
e_{j,u}&=u\otimes \iota({c_j})+R{,}&
f_{j,v}&=v\otimes\iota({c_j^{-1}})+R,\\
h_1&=\frac{h-z}{2}={\bf 1}\otimes t_1(-1)\iota(1)+R&
h_2&=-\frac{h+z}{2}={\bf 1}\otimes t_2(-1)\iota(1)+R
\end{align*}
generate a subalgebra 
$\mathfrak{gl}_2(j,u,v)$ of $\mathfrak m$ isomorphic to $\mathfrak{gl}_2$. %
In particular, {if we identify $e_{j,u}$ with $e_{jk}$ and $f_{j,v}$ with $f_{jk}$ for any fixed $k,$ $1\le k\le c(j),$ the defining relations of $\frak m$ involving this choice of generators along with $h_1$ and $h_2$, namely the relations \eqref{Mhh}, \eqref{Mhe}, \eqref{Mhf}, and \eqref{Mef}}  are satisfied. That is, we have:
\begin{align*}[{e}_{j,u}, {f}_{j,v}]&=-(j{h}_1+{h}_2)&
[{h}_{1}, {e}_{j,u}]={e}_{j,u}\\\\
[{h}_{2}, {e}_{j,u}]&=j{e}_{j,u}&
[{h}_{1}, {f}_{j,v}]=-{f}_{j,v}\\\\
[{h}_{2}, {f}_{j,v}]&=-j{f}_{j,v}&
[{h}_{1}, {h}_{2}]=0.
\end{align*}

\end{theorem}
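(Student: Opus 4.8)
The plan is to verify each of the six bracket relations directly by computing zero-modes of the relevant vertex operators on $V=V^\natural\otimes V_{1,1}$ and projecting modulo $R$. First I would confirm the membership claims: since $u,v$ are primary of weight $j+1$ in $V^\natural$ and $\iota(c_j),\iota(c_j^{-1})$ are primary of weight $-j$ in $V_{1,1}$ (by the lemma stating $\iota(c)$ has weight $-mn$, here $mn=j$), the tensor products $u\otimes\iota(c_j)$ and $v\otimes\iota(c_j^{-1})$ are primary of weight $1$, hence lie in $P_1$. The Cartan elements $h_1={\bf1}\otimes t_1(-1)\iota(1)+R$ and $h_2={\bf1}\otimes t_2(-1)\iota(1)+R$ are already known to lie in $P_1$ from Lemma~\ref{JLW_lemma} (being linear combinations of $h,z$), and I would record that the change of basis $h_1=(h-z)/2$, $h_2=-(h+z)/2$ is invertible so these still span the Cartan.

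The core computation uses the tensor factorization \eqref{tensor}, $Y(a\otimes b,x)=Y(a,x)\otimes Y(b,x)$, so that the zero-mode of a product splits as a sum over matching modes from each factor. For the Cartan relations \eqref{Mhe} and \eqref{Mhf}, I would compute $[h_1,e_{j,u}]=(h_1)_0\,(u\otimes\iota(c_j))+R$, where $(h_1)_0$ acts on the $V_{1,1}$ factor via $t_1(-1)\iota(1)$. Using \eqref{hacts}, $t_i(-1)\iota(1)$ acts on $\iota(c_j)$ by the pairing $\langle t_i,\overline{c_j}\rangle$; with $t_1=(0,-1)$, $t_2=(-1,0)$ and the given lattice form $\left(\begin{smallmatrix}0&-1\\-1&0\end{smallmatrix}\right)$, I would compute $\langle t_1,(1,j)\rangle=1$ and $\langle t_2,(1,j)\rangle=j$, yielding $[h_1,e_{j,u}]=e_{j,u}$ and $[h_2,e_{j,u}]=j\,e_{j,u}$, and the analogous signs for $f_{j,v}$ against $\overline{c_j^{-1}}=(-1,-j)$. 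The relation $[h_1,h_2]=0$ follows since $t_1(-1),t_2(-1)$ act on the commutative algebra $S(\widehat{\mathfrak h}^-)$ and the Heisenberg bracket $[t_1\otimes t^{-1},t_2\otimes t^{-1}]$ vanishes (the central term needs $m+n=0$ with $m=n=-1$), so their zero-modes commute modulo $R$.

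The main obstacle, and the step deserving the most care, is the bracket $[e_{j,u},f_{j,v}]=-(jh_1+h_2)$, which mixes both tensor factors nontrivially. Here I would use that $(e_{j,u})_0$ is a sum $\sum_n (u_n)\otimes(\iota(c_j)_{-n-1})$ acting on $v\otimes\iota(c_j^{-1})$; the $V_{1,1}$ factor is governed by \eqref{V11op1}, which gives $Y(\iota(c_j),x)\iota(c_j^{-1})=\exp\!\big(\sum_{n\ge1}\tfrac{\overline{c_j}(-n)}{n}x^n\big)\iota(c_jc_j^{-1})\,x^{\langle\overline{c_j},\overline{c_j^{-1}}\rangle}$, where $\langle(1,j),(-1,-j)\rangle=2j$ sets the leading power of $x$. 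Matching weights forces a single term of the total zero-mode to survive, pairing $u_{2j+1}v$ from the $V^\natural$ factor — which equals ${\bf1}$ by hypothesis — against the degree-one piece of the exponential, namely $\overline{c_j}(-1)\iota(1)$. Writing $\overline{c_j}=(1,j)=j\,t_1+t_2$ (check: $jt_1+t_2=(−1,−j)$... I would instead express the result $\overline{c_j}(-1)\iota(1)$ in the $t_1,t_2$ basis via $(1,j)=-\,t_2-j\,t_1$ and reconcile signs with Lemma~\ref{existprim1}, which supplies the factor $(-1)^j$ relating $u_{2j+1}v$ to $(u,v)_{V^\natural}{\bf1}$) to obtain exactly $-(jh_1+h_2)$. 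Throughout, I would repeatedly invoke that $L(-1)P_0\subseteq R$ and that all computations are only required modulo $R$, which lets me discard any terms landing in the radical; the sign bookkeeping from the normalization $({\bf1},{\bf1})_{V^\natural}=-1$ and the cocycle of $\widehat L$ is where errors are most likely to creep in.
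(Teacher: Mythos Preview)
Your proposal is correct and follows essentially the same approach as the paper: compute $[e_{j,u},f_{j,v}]$ via \eqref{V11op1}/\eqref{V11op2} together with a weight argument isolating a single surviving term (the paper makes this explicit by observing $u_{r+2j}v\in V^\natural_{1-r}$, so only $r=1$ contributes), and compute the Cartan brackets via the zero mode $t_i(0)\iota(c_j)=\langle t_i,\overline{c_j}\rangle\,\iota(c_j)$. One simplification relative to your write-up: since the hypothesis is $u_{2j+1}v={\bf 1}$ directly, there is no need to invoke Lemma~\ref{existprim1} or track the factor $(-1)^j$ or the normalization $({\bf 1},{\bf 1})_{V^\natural}=-1$, and the cocycle worry is moot because $c_jc_j^{-1}=1$ holds exactly in $\widehat L$ so that $\iota(c_jc_j^{-1})=\iota(1)$; the identity $(1,j)=-j\,t_1-t_2$ then gives ${\bf 1}\otimes\overline{c_j}(-1)\iota(1)+R=-(jh_1+h_2)$ with no further signs to reconcile.
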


\begin{proof}
We first compute $[e_{j,u},f_{j,v}]$ by calculating the zero mode of the vector in $P_1$ that represents the generator ${e_{j,u}}$, acting on the vector in $P_1$ that represents the generator ${f_{j,v}}$. {Define $p_k(x_1, x_2, \cdots)\in~ \R[x_1, x_2, \cdots ]$ for $k\ge 0$ by\[\exp\left(\sum_{n=1}^\infty\frac{x_n}{n}y^n\right)=\sum_{k=0}^\infty p_k(x_1, x_2, \cdots) y^k,\]  (\cite{FLM} (8.3.22) and (8.2.23)). } Equation \eqref{V11op1} is then equivalent to
{\begin{equation}\label{V11op2}
 Y(\iota(a), x)\iota(b)=\sum_{r=0}^\infty p_r(\bar{a}(-1),\bar{a}(-2), \cdots )x^r\iota(ab)x^{\langle \bar{a}, \bar{b}\rangle}.
 \end{equation}}

Applying Equation \eqref{V11op2}, we have
\begin{align*}
\begin{split}
Y(u\otimes {\iota(c_j)},x)(v\otimes{\iota(c_j^{-1})})&=\left(Y(u,x)\otimes Y({\iota(c_j)}, x)\right)(v\otimes{\iota(c_j^{-1})})\\&=Y(u,x)v\otimes Y({\iota(c_j)},x){\iota(c_j^{-1})}\\&=\sum_{i\in\mathbb{Z}}u_ix^{-i-1}v\otimes x^{\langle (1,j),(-1,-j)\rangle}\sum_{r=0}^\infty p_r(\overline{c_j}(-1),\overline{c_j}(-2)\cdots){\iota(1)}x^r\\&=\sum_{i\in\mathbb{Z}}u_ix^{-i-1}v\otimes \sum_{r=0}^\infty p_r(\overline{c_j}(-1),\overline{c_j}(-2)\cdots){\iota(1)}x^{r+2j}.
\end{split}
\end{align*}

The resulting representative of $[e_{j,u},f_{j,v}]$ is the coefficient of $x^{-1}$ in this expression. Therefore, we have
\begin{equation}\label{e0f}
[e_{j,u},f_{j,v}]=\sum_{r=0}^\infty u_{r+2j}v\otimes p_r(\overline{c_j}(-1),\overline{c_j}(-2)\cdots){\iota(1)}+R.
\end{equation}Since $u\in V_{j+1}^\natural$, $u_{r+2j}$ has weight equal to $j+1-(r+2j)-1=-r-j$ as an operator (\cite{FLM}, Remark 8.10.1). Since $v$ is also an element of $V_{j+1}^\natural$, it follows that $u_{r+2j}v\in V_{1-r}^\natural.$ Since $V_1^\natural$ is the zero subspace, and $V^\natural$ has no negative weight spaces, this implies that the only term in \eqref{e0f} that is nonzero is the term corresponding to $r=1$. Thus 
\begin{equation*}
[e_{j,u},f_{j,v}]={u}_{2j+1}{v}\otimes p_1(\overline{c_j}(-1),\overline{c_j}(-2)\cdots){\iota(1)}+R={\bf 1}\otimes\overline{c_j}(-1){\iota(1)}+R={-(jh_1+h_2)}.
\end{equation*}

We similarly compute $[h_1, e_{j,u}]$: The vertex operator for the representative of $h_1$ in $P_1$ acting on the representative of $e_{j,u}$ in $P_1$ is
\begin{align*}\begin{split}
Y({\bf 1}\otimes{t_1}(-1){\iota(1)},x)(u\otimes {\iota(c_j)})=\left({1}\otimes \sum_{i\in\mathbb{Z}}t_1(i)x^{-i-1}\right)(u\otimes {\iota(c_j)})=u\otimes \sum_{i\in\mathbb{Z}}t_1(i)x^{-i-1}{\iota(c_j)},
\end{split}\end{align*} 
having used Equations \eqref{vertexoph} and \eqref{verteop}. The coefficient of $x^{-1}$ in this expression is a representative in $P_1$ of $[h_1,e_{j,u}]$. Therefore, {applying \eqref{hacts}, }we have
\begin{align*}\label{h0e}\begin{split}
[h_1,e_{j,u}]=u\otimes t_1(0){\iota(c_j)}+R=u\otimes \langle(0,-1),(1,j)\rangle {\iota(c_j)}+R =u\otimes {\iota(c_j)}+R=e_{j,u}\end{split}\end{align*} 
and
\begin{align*}\begin{split}
[h_1,f_{j,v}]=v\otimes t_1(0){\iota(c_j^{-1})}+R=v\otimes \langle (0,-1), (-1,-j)\rangle{\iota(c_j^{-1})}+R=-v\otimes{\iota(c_j^{-1})}+R=-f_{j,v}.
\end{split}\end{align*}

Replacing $t_1$ by $t_2$ in the above calculations similarly proves 
\begin{equation*}
[{h}_{2}, {e}_{j,u}]=je_{j,u},\qquad
[{h}_{2}, {f}_{j,v}]=-j{f}_{j,v}.
\end{equation*}
Finally, the fact that 
    $[h_1,h_2]=0$ follows from the fact that $h_1$ and $h_2$ are linear combinations of $h$ and $z$ and $h$ and $z$ commute with each other.
 \end{proof}

Recall from Section~\ref{prelim} the induced $\widehat{L}$-module $\mathbb{R}\{L\},$ with the action by $\widehat{L}$ given by Equations \eqref{aacts} and \eqref{kappaacts}.

 \begin{corollary}   The subalgebras $\mathfrak{gl}_2(j,u,v)$ are independent of the choice of $c_j\in\widehat{L}$. In particular, 
$$\iota(\kappa c_j)=-\iota(c_j),$$  that is,  the elements $e_{j,u}$, $f_{j,v}$ are rescaled for different choices of $c_j$. 
     
 \end{corollary}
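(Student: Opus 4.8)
The plan is to reduce the statement to a short computation using the two defining relations \eqref{aacts} and \eqref{kappaacts} for the action of $\widehat{L}$ on $\mathbb{R}\{L\}$. Since $\widehat{L}\to L$ is a central extension by the order-two group $\langle\kappa\rangle$, the fiber over $(1,j)\in L$ consists of exactly two elements, namely $c_j$ and $\kappa c_j$. Thus proving independence of the choice of lift amounts to comparing the generators built from $c_j$ with those built from $\kappa c_j$, and I expect these to differ only by an overall sign on the root vectors.

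First I would compute the effect on $\iota$. By \eqref{aacts} and \eqref{kappaacts},
\[
\iota(\kappa c_j)=\kappa\cdot\iota(c_j)=-\iota(c_j).
\]
Since $\kappa$ is central of order two in $\widehat{L}$, one has $(\kappa c_j)^{-1}=\kappa\,c_j^{-1}$, so the same computation yields $\iota((\kappa c_j)^{-1})=-\iota(c_j^{-1})$. Consequently, replacing the lift $c_j$ by $\kappa c_j$ in the formulas of Theorem~\ref{main} sends
\[
e_{j,u}=u\otimes\iota(c_j)+R\ \longmapsto\ u\otimes\iota(\kappa c_j)+R=-e_{j,u},
\]
and likewise $f_{j,v}\mapsto -f_{j,v}$, while $h_1$ and $h_2$, being built from $t_1,t_2$ and independent of $c_j$, are left unchanged. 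I would also note that the hypothesis $u_{2j+1}v={\bf 1}$ of Theorem~\ref{main} involves only $u$ and $v$, and is therefore unaffected by the choice of lift.

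Finally I would conclude that the subalgebra is unchanged. The vectors $-e_{j,u}$, $-f_{j,v}$, $h_1$, $h_2$ span precisely the same subspace of $\mathfrak{m}$ as $e_{j,u}$, $f_{j,v}$, $h_1$, $h_2$, so $\mathfrak{gl}_2(j,u,v)$ depends only on $u$, $v$, $j$ and not on the lift; the sole effect of the choice is the advertised rescaling of the root vectors. To be fully careful I would check that the relations \eqref{Mhh}, \eqref{Mhe}, \eqref{Mhf}, and \eqref{Mef} established in Theorem~\ref{main} survive the sign change — for instance $[-e_{j,u},-f_{j,v}]=[e_{j,u},f_{j,v}]=-(jh_1+h_2)$ and $[h_1,-e_{j,u}]=-e_{j,u}$ — which is immediate and confirms that the isomorphism with $\mathfrak{gl}_2$ persists. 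There is no genuine obstacle here: the only points requiring attention are the identity $(\kappa c_j)^{-1}=\kappa\,c_j^{-1}$ (which uses centrality of $\kappa$ and $\kappa^2=1$) and the routine verification that the bracket relations are preserved under rescaling by $-1$.
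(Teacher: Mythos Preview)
Your proof is correct and follows essentially the same approach as the paper: both use the action of $\kappa$ as $-1$ on $\mathbb{R}\{L\}$ (Equation~\eqref{kappaacts}) to obtain $\iota(\kappa c_j)=-\iota(c_j)$ and conclude that changing the lift only rescales $e_{j,u}$ and $f_{j,v}$. Your version is simply more explicit, spelling out the computation for $\iota((\kappa c_j)^{-1})$ and the invariance of the bracket relations under the sign change.
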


 \begin{proof} Recall from Equation (5) that $\kappa$ acts as $-1$ on $\R\{L\}$. The equality $\iota(\kappa c_j)=-\iota(c_j)$ follows. Hence the construction of the $\mathfrak{gl}_2(j,u,v)$ is independent of the choice of  $c_j$.
     \end{proof}

We make the following remarks:
    
  \begin{itemize}
      \item 
  Though the $\mathfrak{gl}_2(j,u,v)$ subalgebras corresponding to the imaginary simple roots are all isomorphic as $j>0$ varies, once the basis $\{h_1, h_2\}$ for $\mathfrak{h}$ is fixed as in Theorem \ref{main}, the $\mathfrak{gl}_2(j,u,v)$ subalgebras corresponding to distinct choices of $j$
    can be distinguished by the adjoint action of $h_2$.  In particular, we have  $[ h_2, e_{j,u} ] = j e_{j,u}$ and similarly $[ h_2, f_{j,v} ] = -j f_{j,v}.$ Note also that for $j=-1$ in Theorem~\ref{main}, we obtain the $\mathfrak{gl}_2$ subalgebra $\mathfrak{gl}_2(-1)$ of $\mathfrak{m}$ corresponding to the unique real simple root, with generators that are linear combinations of the generators in Lemma \ref{JLW_lemma}.

       \item Recall by Lemma \ref{existprim1} that given $u, v\in P_{j+1}^\natural,$
       \begin{equation*}
        u_{2j+1}v=(-1)^j(u,v)_{V^\natural}{\bf 1},
       \end{equation*}
       where $(\cdot, \cdot)_{V^\natural}$ is the bilinear form given by Proposition \ref{formdef}. Therefore, the hypothesis $u_{2j+1}v=\bf{1}$ of Theorem \ref{main} is equivalent to $(u,v)_{V^\natural}=(-1)^j.$

     \end{itemize}
    \begin{proposition}
        \label{corA}
       Let $j\in \{-1, 1, 2, \cdots\}$ and  let $0\neq u\in P_{j+1}^\natural$.  Then there exists $v\in P_{j+1}^\natural$ such that $u$ and $v$ satisfy the hypothesis of Theorem \ref{main}. That is, such that $u_{2j+1}v={\bf 1}.$ In particular, this implies that $\{e_{j, u}, f_{j, v}, h_1, h_2\}$ (as defined in Theorem \ref{main}) span a $\mathfrak{gl}_2$ subalgebra.
    \end{proposition}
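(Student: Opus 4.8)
The plan is to reduce the existence of $v$ to a statement about the invariant bilinear form $(\cdot,\cdot)_{V^\natural}$ and then to exploit its definiteness. First I would invoke Lemma~\ref{existprim1}: for primary vectors $u,v\in V_{j+1}^\natural$ one has $u_{2j+1}v=(-1)^j(u,v)_{V^\natural}{\bf 1}$. Consequently the desired identity $u_{2j+1}v={\bf 1}$ is equivalent to the scalar condition $(u,v)_{V^\natural}=(-1)^j$. Thus the problem becomes purely one of pairing: given $0\neq u\in P_{j+1}^\natural$, produce $v\in P_{j+1}^\natural$ that pairs with $u$ to the prescribed nonzero scalar $(-1)^j$.

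Next I would use that $(\cdot,\cdot)_{V^\natural}$ is negative definite (Section~\ref{voa}), and hence that its restriction to the subspace $P_{j+1}^\natural$ is again definite, in particular non-degenerate. Since $u\neq 0$, this gives $(u,u)_{V^\natural}\neq 0$, so I can simply set
\[
v=\frac{(-1)^j}{(u,u)_{V^\natural}}\,u.
\]
Because $P_{j+1}^\natural$ is a linear subspace, $v$ is again a primary vector of weight $j+1$, and by bilinearity $(u,v)_{V^\natural}=(-1)^j$, which by the reduction above yields $u_{2j+1}v={\bf 1}$. The final assertion then follows immediately by applying Theorem~\ref{main} to this pair $u,v$, which shows that $\{e_{j,u},f_{j,v},h_1,h_2\}$ spans a copy of $\mathfrak{gl}_2$.

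There is essentially no serious obstacle here: all the analytic content is already packaged in Lemma~\ref{existprim1} and in the definiteness of the form established in Section~\ref{voa}, so the argument is a short two-line construction rather than a computation. The only points requiring a small amount of care are verifying that $v$ remains primary of the correct weight (automatic, since it is a scalar multiple of $u$) and observing that the argument is uniform across the range $j\in\{-1,1,2,\dots\}$, including the degenerate case $j=-1$, where $P_0^\natural=V_0^\natural=\R{\bf 1}$ is one-dimensional and the construction simply rescales ${\bf 1}$.
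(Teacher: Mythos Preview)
Your proof is correct and is essentially identical to the paper's own argument: the paper also observes that definiteness of the invariant form forces $(u,u)\neq 0$ and then takes $v=(-1)^j u/(u,u)$, invoking (implicitly) the same reduction via Lemma~\ref{existprim1}. Your write-up is slightly more detailed in spelling out why $v$ remains primary and in handling the edge case $j=-1$, but the mathematical content is the same.
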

    \begin{proof}
Since the bilinear form given by Proposition \ref{formdef} can be normalized to be positive definite, $(u,u)\ne 0$. Therefore, the pair $u, v$ where $v=(-1)^j\frac{u}{(u,u)}$ satisfies $u_{2j+1}v={\bf 1}.$
    \end{proof}
    \begin{corollary}\label{primary}
  Let $j\in \{-1, 1, 2, \cdots\}.$ Then there exists a pair of primary vectors $u,v\in P^\natural_{j+1}$ that satisfy the hypothesis of Theorem \ref{main}.
    \end{corollary}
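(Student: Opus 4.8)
The plan is to deduce Corollary~\ref{primary} almost immediately from Proposition~\ref{corA}, since the two statements are nearly identical: Proposition~\ref{corA} asserts that for a \emph{given} nonzero $u\in P^\natural_{j+1}$ there exists a companion $v\in P^\natural_{j+1}$ with $u_{2j+1}v={\bf 1}$, while Corollary~\ref{primary} merely asserts the \emph{existence} of such a pair. Thus the only gap to fill is the production of a single nonzero primary vector $u\in P^\natural_{j+1}$ to which Proposition~\ref{corA} can be applied. Once such a $u$ is in hand, Proposition~\ref{corA} furnishes $v$ and the conclusion follows verbatim.

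First I would dispose of the case $j=-1$, where $P^\natural_{j+1}=P^\natural_0=V^\natural_0=\R{\bf 1}$, so $u={\bf 1}$ is a nonzero primary vector and the claim is clear. For $j\geq 1$ I would argue that $P^\natural_{j+1}\neq\{0\}$. The cleanest route is to exhibit an explicit nonzero primary vector, or else to argue abstractly: by Proposition~\ref{surj} the map $L(1):V^\natural_{j+1}\to V^\natural_j$ is surjective for $j+1\neq 1$, and by Proposition~\ref{Liinj} the map $L(-1):V^\natural_j\to V^\natural_{j+1}$ is injective for $j\neq 0$, so a dimension count controls $\ker(L(1)|_{V^\natural_{j+1}})$; however, a primary vector must be killed by \emph{all} $L(k)$, $k>0$, so this alone is not quite enough. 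The robust argument is that $V^\natural$ is a vertex operator algebra of CFT type with $V^\natural_1=\{0\}$, whose decomposition into irreducible Virasoro (or $\M$) modules guarantees that every graded piece $V^\natural_{j+1}$ for $j\geq 1$ contains lowest-weight (hence primary) vectors; concretely, since $\dim V^\natural_{j+1}=c(j)>0$ and the weight-$(j+1)$ space cannot consist entirely of Virasoro descendants of lower weight spaces when a genuine new highest weight appears, $P^\natural_{j+1}\neq\{0\}$.

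I expect the main obstacle to be precisely this nonvanishing of $P^\natural_{j+1}$ for all $j\geq 1$: one must ensure there is always a fresh primary vector in each positive weight and not merely Virasoro descendants of ${\bf 1}$. The most self-contained way to settle this within the paper's framework is to invoke the computations referenced in the introduction (and carried out in Appendix~\ref{sectiongen}) showing $\dim P^\natural_{j+1}>0$, or to note that the graded dimension $J(q)$ grows strictly faster than the graded dimension of the Virasoro vacuum module generated by ${\bf 1}$, forcing new primary vectors at every level. Once $P^\natural_{j+1}\neq\{0\}$ is established, choosing any $0\neq u\in P^\natural_{j+1}$ and invoking Proposition~\ref{corA} with $v=(-1)^j u/(u,u)_{V^\natural}$ completes the proof, since positive-definiteness (equivalently negative-definiteness, up to the overall sign fixed in Section~\ref{voa}) of the form guarantees $(u,u)_{V^\natural}\neq 0$.
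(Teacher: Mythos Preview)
Your proposal is correct and takes essentially the same approach as the paper: produce a nonzero $u\in P^\natural_{j+1}$ and then invoke Proposition~\ref{corA}. The paper does this in two lines by citing Theorem~\ref{existprim} from Appendix~\ref{sectiongen} (the generating-series computation $\sum_j \dim P^\natural_j\, q^{j-1} = q^{-1/24}J(q)\eta(q)+1$) for the nonvanishing of $P^\natural_{j+1}$, which is exactly the route you identify as ``most self-contained''; the alternative sketches you float (the $L(1)/L(-1)$ dimension count, the Virasoro-descendant growth comparison) are heuristic and, as you yourself note, not quite complete, so you should simply cite Theorem~\ref{existprim} and drop them.
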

  \begin{proof}
  By Theorem \ref{existprim}, there exists a nonzero primary vector $u$ of weight $j+1$. The result then follows from Proposition~\ref{corA}.
  \end{proof}

\section{The $\mathbb{M}$-action on $\mathfrak{gl}_2$ subalgebras}\label{ActionOfM}

 In this section, we show that the \cite{FLM} action of $\mathbb{M}$ on $V^\natural$, restricted to the subspace $P^\natural_{j+1}$ of primary vectors in $V^\natural_{j+1}$, induces an $\M$-action on the set of $\mathfrak{gl}_2(j,u,v)$ subalgebras corresponding to a fixed imaginary simple root.

\begin{theorem}\label{MactGj}Fix $j\in \{-1, 1, 2, \cdots\}$. Let $\mathfrak{gl}_2(j,u,v)$ be as defined in Theorem \ref{main}.   The \cite{FLM} action of $\M$ on $V^\natural$ induces an action on the following set of $\mathfrak{gl}_2$ subalgebras
\begin{equation*}
    \mathcal{G}_j=\left\{\mathfrak{gl}_2(j,u,v)\mid\ 
u,v\in P^\natural_{j+1},\ u_{2j+1}v={\bf 1}\right\}
\end{equation*}
defined by
$$g\cdot\mathfrak{gl}_2(j,u,v):=\mathfrak{gl}_2(j,g\cdot u,g\cdot v)$$
for $g\in\M$. In particular, $\mathcal{G}_{-1}$ contains the single subalgebra
$\mathfrak{gl}_2(-1)$ with trivial $\M$-action.
\end{theorem}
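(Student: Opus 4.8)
The plan is to verify that the prescription $g\cdot\mathfrak{gl}_2(j,u,v):=\mathfrak{gl}_2(j,g\cdot u,g\cdot v)$ is a well-defined group action. The key point is that this formula produces a bona fide element of $\mathcal{G}_j$, and that it respects composition and identity in $\M$. First I would check well-definedness: given $g\in\M$ and a pair $(u,v)$ with $u,v\in P^\natural_{j+1}$ and $u_{2j+1}v={\bf 1}$, I must show the image pair $(g\cdot u, g\cdot v)$ again lies in $P^\natural_{j+1}$ and satisfies $(g\cdot u)_{2j+1}(g\cdot v)={\bf 1}$. The first condition follows immediately from Lemma~\ref{submodprim}, which tells us that $P^\natural_{j+1}$ is an $\M$-submodule of $V^\natural_{j+1}$, so $g$ preserves the space of primary vectors of weight $j+1$.

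The heart of the argument is the compatibility of the $\M$-action with the mode operation $u_{2j+1}v$. Since $\M$ acts on $V^\natural$ by vertex operator algebra automorphisms, it commutes with all the modes in the sense that $g\cdot(a_n b)=(g\cdot a)_n(g\cdot b)$ for all $a,b\in V^\natural$ and $n\in\Z$, and moreover $g$ fixes the vacuum vector ${\bf 1}$ (as it fixes the conformal vector and the vacuum, being a VOA automorphism). Applying $g$ to the hypothesis $u_{2j+1}v={\bf 1}$ therefore yields $(g\cdot u)_{2j+1}(g\cdot v)=g\cdot{\bf 1}={\bf 1}$, which is exactly the condition needed for $(g\cdot u, g\cdot v)$ to define a member of $\mathcal{G}_j$ via Theorem~\ref{main}. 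This shows the map $\mathcal{G}_j\to\mathcal{G}_j$, $\mathfrak{gl}_2(j,u,v)\mapsto\mathfrak{gl}_2(j,g\cdot u,g\cdot v)$, is well-defined. The group-action axioms $(gg')\cdot\mathfrak{gl}_2(j,u,v)=g\cdot(g'\cdot\mathfrak{gl}_2(j,u,v))$ and $1\cdot\mathfrak{gl}_2(j,u,v)=\mathfrak{gl}_2(j,u,v)$ then follow directly from the corresponding axioms for the $\M$-action on $V^\natural$ applied coordinatewise to $u$ and $v$.

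The main obstacle I anticipate is a subtle one about whether the action is well-defined on \emph{subalgebras} rather than on labeled pairs: distinct pairs $(u,v)$ and $(u',v')$ might in principle generate the same $\mathfrak{gl}_2$ subalgebra, and one would want $g$ to send them to the same image subalgebra. However, inspecting Theorem~\ref{main}, the generators $e_{j,u}=u\otimes\iota(c_j)+R$ and $f_{j,v}=v\otimes\iota(c_j^{-1})+R$ depend linearly and injectively on $u$ and $v$ (the tensor factors $\iota(c_j),\iota(c_j^{-1})$ being fixed), so the subalgebra $\mathfrak{gl}_2(j,u,v)$ determines the pair $(u,v)$ up to the rescaling freedom already noted, and this ambiguity is itself $\M$-equivariant. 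Thus the action descends unambiguously to $\mathcal{G}_j$.

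Finally, for the case $j=-1$, the set $\mathcal{G}_{-1}$ consists of the single subalgebra $\mathfrak{gl}_2(-1)$: here the relevant primary vectors live in $V^\natural_0=\R{\bf 1}$, on which $\M$ acts trivially (fixing the vacuum), so $g\cdot u=u$ and $g\cdot v=v$ for every $g\in\M$, and the induced action on $\mathcal{G}_{-1}$ is trivial. This recovers the observation from \cite{JLW}, recorded in Lemma~\ref{JLW_lemma}, that $\mathfrak{gl}_2(-1)$ is a trivial $\M$-module.
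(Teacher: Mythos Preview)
Your proof is correct and follows essentially the same approach as the paper: both arguments hinge on the fact that $P^\natural_{j+1}$ is an $\M$-submodule (Lemma~\ref{submodprim}) and that VOA automorphisms satisfy $(g\cdot u)_{2j+1}(g\cdot v)=g\cdot(u_{2j+1}v)=g\cdot{\bf 1}={\bf 1}$. You are in fact slightly more careful than the paper, which does not explicitly address the group-action axioms, the well-definedness issue for subalgebras versus labeled pairs, or the $j=-1$ case.
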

{\begin{proof}Let $u,v\in P_{j+1}^\natural$ such that $u_{2j+1}v={\bf 1}.$ Then $\mathfrak{gl}_2(j, u, v)\in \mathcal{G}_j$ is generated by
\begin{align*} e_{j,u}&=u\otimes \iota(c_j)+R,&
f_{j,v}&=v\otimes \iota(c_j^{-1})+R,&\\
h_1&={\bf 1}\otimes t_1(-1)\iota(1)+R,&
h_2&={\bf 1}\otimes t_2(-1)\iota(1)+R.&
\end{align*}
Recall that the action of $\M$ on $V^\natural$ induces an action of $\M$ on $V=V^\natural\otimes V_{1,1}$, where $\M$ acts trivially on $V_{1,1}$. Restricting this action to the subspace $P^\natural_{j+1}$ of primary vectors in $V^\natural_{j+1}$, we get an induced  action of $\M$ on $\mathfrak{m}$ (\cite{JLW}, \cite{BoInvent}). 

In particular, 
given any $g\in \M,$ we have 
\begin{align} g\cdot e_{j,u}&=g\cdot u\otimes \iota(c_j)+R,&
g\cdot f_{j,v}&=g\cdot v\otimes \iota(c_j^{-1})+R,&\\
g\cdot h_1&={\bf 1}\otimes t_1(-1)\iota(1)+R=h_1,&
g\cdot h_2&={\bf 1}\otimes t_2(-1)\iota(1)+R=h_2,&\label{hfixed}
\end{align} 
where in line \eqref{hfixed} we have used that the action of $\M$ on $V^\natural$ is by vertex operator algebra automorphisms and vertex operator algebra automorphisms fix the vacuum vector (\cite{FLM}, (8.10.22)).
 Since $P^\natural_{j+1}$ is an $\M$-module (Corollary~\ref{proper}), we have $g\cdot u$ and $g\cdot v\in P_{j+1}^\natural.$ Again using that the action of $\M$ on $V^\natural$ is by vertex operator algebra automorphisms,
$$(g\cdot u)_{2j+1}(g\cdot v)=g\cdot (u_{2j+1}v)=g\cdot {\bf 1}={\bf 1},$$
 (\cite{FLM}, (8.10.20), (8.10.22)).
 Thus, $g\cdot e_{j,u},$ $g\cdot f_{j,v}$, $g \cdot h_1$, and $g\cdot h_2$ are of the form given in Theorem \ref{main} and therefore generate $\mathfrak{gl}_2(j, g\cdot u, g\cdot v)\in \mathcal{G}_j$. \end{proof}}

\begin{remark}Since the action of $\M$ on $\mathfrak{m}$ is by Lie algebra automorphisms (\cite{BoInvent}, \cite{JLW}), it is obvious that $g\cdot e_{j,u},$ $g\cdot f_{j,v}, g\cdot h_1,$ and $g\cdot h_2$ in the above proof generate a $\mathfrak{gl}_2$ subalgebra of $\mathfrak{m}.$ Theorem \ref{MactGj} shows that the $\M$ action on $\mathfrak{m}$ induces an action of $\M$ on the set $\mathcal{G}_j$.  
\end{remark}

{\begin{lemma}\label{moduleE}For fixed $j>0$ and fixed $c_j\in \widehat{L}$ such that $\bar{c_j}=(1,j)$, 
\begin{align*}\label{spaceprim}
   E_j&:= \left\{e_{j,u}=u\otimes \iota({c_j})+R\mid u\in P^\natural_{j+1}\right\}\subseteq \mathfrak{m}_{(1,j)}\quad\text{ and }\\
   F_j&:= \left\{f_{j,u}=u\otimes \iota({c_j}^{-1})+R\mid u\in P^\natural_{j+1}\right\}\subseteq \mathfrak{m}_{(-1,-j)}
    \end{align*}
\end{lemma}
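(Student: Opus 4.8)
The plan is to deduce both containments directly from the bracket computations already carried out in the proof of Theorem~\ref{main}, after first verifying that each $e_{j,u}$ and $f_{j,u}$ is a bona fide element of $\mathfrak m=P_1/R$. The essential observation is that those eigenvalue computations never used the normalization $u_{2j+1}v={\bf 1}$ appearing in the hypothesis of Theorem~\ref{main}; they depend only on the shape of the vectors $u\otimes\iota(c_j)$ and $u\otimes\iota(c_j^{-1})$, and hence remain valid for an arbitrary $u\in P^\natural_{j+1}$.

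First I would confirm that $u\otimes\iota(c_j)\in P_1$ for every $u\in P^\natural_{j+1}$. Since $\overline{c_j}=(1,j)$, the vector $\iota(c_j)$ is primary of weight $-j$ by the lemma citing \cite{FLM}, Remark~8.7.8, while $u$ is primary of weight $j+1$ by assumption; by the tensor-product lemma for primary vectors, $u\otimes\iota(c_j)$ is then primary of weight $(j+1)+(-j)=1$, so it lies in $P_1$ and $e_{j,u}\in\mathfrak m$. The identical argument, using $\overline{c_j^{-1}}=(-1,-j)$ (which also gives weight $-j$), shows $u\otimes\iota(c_j^{-1})\in P_1$ and $f_{j,u}\in\mathfrak m$.

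Next I would identify the root spaces. An element $x\in\mathfrak m$ lies in $\mathfrak m_{(1,j)}$ exactly when $[h_1,x]=x$ and $[h_2,x]=jx$, and in $\mathfrak m_{(-1,-j)}$ exactly when $[h_1,x]=-x$ and $[h_2,x]=-jx$, because the specialized root spaces are precisely the simultaneous $\ad h_1$, $\ad h_2$ eigenspaces. The computations in the proof of Theorem~\ref{main}, which rest only on \eqref{hacts} together with the pairings $\langle(0,-1),(1,j)\rangle=1$ and $\langle(-1,0),(1,j)\rangle=j$, yield $[h_1,e_{j,u}]=e_{j,u}$ and $[h_2,e_{j,u}]=je_{j,u}$; the analogous computation with $\langle(0,-1),(-1,-j)\rangle=-1$ and $\langle(-1,0),(-1,-j)\rangle=-j$ yields $[h_1,f_{j,u}]=-f_{j,u}$ and $[h_2,f_{j,u}]=-jf_{j,u}$. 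This places $e_{j,u}$ in $\mathfrak m_{(1,j)}$ and $f_{j,u}$ in $\mathfrak m_{(-1,-j)}$, giving $E_j\subseteq\mathfrak m_{(1,j)}$ and $F_j\subseteq\mathfrak m_{(-1,-j)}$.

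Finally, since $u\mapsto u\otimes\iota(c_j)+R$ and $u\mapsto u\otimes\iota(c_j^{-1})+R$ are linear, $E_j$ and $F_j$ are in fact linear subspaces of the respective root spaces; and because $\M$ acts trivially on $V_{1,1}$ while preserving $P^\natural_{j+1}$ (Lemma~\ref{submodprim}), we have $g\cdot e_{j,u}=e_{j,g\cdot u}\in E_j$ and similarly for $F_j$, so both are $\M$-submodules. I do not anticipate a genuine obstacle here: the only point requiring care is the bookkeeping one of disentangling the eigenvalue identities of Theorem~\ref{main} from the normalization hypothesis $u_{2j+1}v={\bf 1}$ of that theorem, which plays no role whatsoever in the root-space assignment.
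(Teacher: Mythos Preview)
Your proof is correct and in fact more thorough than the paper's. The paper's proof addresses only the $\M$-invariance: it notes that $g\cdot e_{j,u}=(g\cdot u)\otimes\iota(c_j)+R$ and invokes the fact that $P_{j+1}^\natural$ is an $\M$-submodule (the paper cites Corollary~\ref{proper}, though your citation of Lemma~\ref{submodprim} is the more direct one), leaving the containments $E_j\subseteq\mathfrak m_{(1,j)}$ and $F_j\subseteq\mathfrak m_{(-1,-j)}$ implicit from the computations already done in Theorem~\ref{main}. You make the same $\M$-invariance argument, but you additionally spell out why each $e_{j,u}$ lies in $P_1/R$ and in the correct root space, correctly observing that the relevant bracket identities from Theorem~\ref{main} never use the normalization hypothesis $u_{2j+1}v={\bf 1}$. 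So the two arguments agree on substance; yours simply fills in details the paper takes for granted.
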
are $\M$-submodules of $\mathfrak{m}_{(1,j)}$ and $\mathfrak{m}_{(-1,-j)}$, respectively.
\begin{proof}Given any $g\in \M$ and $e_{j,u}=u\otimes \iota({c_j})+R\in E_j$ we have
\begin{equation*}
g\cdot e_{j,u}=g\cdot(u\otimes \iota({c_j})+R)=(g \cdot u)\otimes \iota(c_j)+R.
\end{equation*} By Corollary \ref{proper}, $P_{j+1}^\natural$ is an $\M$-submodule of $V_{j+1}^\natural$, so $g\cdot u\in P_{j+1}^\natural$ and hence $g\cdot e_{j,u}\in E_j$. Therefore, $E_j$ is an $\M$-module. A similar proof shows that $F_j$ is an $\M$-module.
\end{proof}}

\begin{theorem}\label{isomorphic}The $\M$-modules $P_{j+1}^\natural$, $E_j$, and $F_j$ are isomorphic.
\end{theorem}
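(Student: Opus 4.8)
The plan is to exhibit explicit $\M$-module isomorphisms $P_{j+1}^\natural \to E_j$ and $P_{j+1}^\natural \to F_j$, from which the isomorphism $E_j\cong F_j$ follows by composition. The natural candidate maps are
\[
\varphi: u \longmapsto e_{j,u} = u\otimes \iota(c_j)+R, \qquad \psi: u \longmapsto f_{j,u} = u\otimes \iota(c_j^{-1})+R,
\]
for $u\in P_{j+1}^\natural$, where $c_j\in\widehat L$ with $\overline{c_j}=(1,j)$ is fixed as in Lemma~\ref{moduleE}. First I would check that $\varphi$ and $\psi$ are well-defined linear maps onto $E_j$ and $F_j$ respectively; linearity and surjectivity are immediate from the definitions of $E_j$ and $F_j$. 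The $\M$-equivariance is exactly the computation already recorded in the proof of Lemma~\ref{moduleE}: since $\M$ acts trivially on $V_{1,1}$ and hence fixes $\iota(c_j)$ and $\iota(c_j^{-1})$, we have $g\cdot\varphi(u) = (g\cdot u)\otimes\iota(c_j)+R = \varphi(g\cdot u)$, and likewise for $\psi$.

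The substantive point, and the step I expect to be the main obstacle, is injectivity: I must show that $u\otimes\iota(c_j)\in R$ (the radical of the bilinear form $(\cdot,\cdot)_{\mathrm{Lie}}$, pulled back to $P_1$) forces $u=0$. Equivalently, I need that the assignment $u\mapsto u\otimes\iota(c_j)+R$ does not collapse any nonzero primary vector. I would argue this using the bilinear form: by Proposition~\ref{formdef} the form on $V^\natural$ is nondegenerate (it is positive or negative definite), and the form on $V_{1,1}$ pairs $\iota(c_j)$ nontrivially with $\iota(c_j^{-1})$. Concretely, for $0\neq u\in P_{j+1}^\natural$, Proposition~\ref{corA} produces $v\in P_{j+1}^\natural$ with $u_{2j+1}v=\mathbf 1$, and then Theorem~\ref{main} shows $[e_{j,u},f_{j,v}]=-(jh_1+h_2)\neq 0$ in $\mathfrak m$; since the bracket is well-defined on the quotient $P_1/R=\mathfrak m$, the class $e_{j,u}=u\otimes\iota(c_j)+R$ cannot be zero. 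Hence $\ker\varphi=0$. The same argument, pairing $f_{j,u}$ against a suitable $e_{j,v}$, gives injectivity of $\psi$.

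Assembling these facts, $\varphi$ and $\psi$ are bijective $\M$-module homomorphisms, so $P_{j+1}^\natural\cong E_j$ and $P_{j+1}^\natural\cong F_j$ as $\M$-modules, and therefore $E_j\cong F_j$ via $\psi\circ\varphi^{-1}$. An alternative route to injectivity, which I would mention in case the reader prefers a cleaner statement, identifies $E_j$ and $F_j$ with their images inside the root spaces $\mathfrak m_{(1,j)}$ and $\mathfrak m_{(-1,-j)}$; the No-ghost isomorphism $\mathfrak m_{(1,j)}\cong V^\natural_{j+1}$ of Section~\ref{Monster} and the fact that $\dim\mathfrak m_{(1,j)}=c(j)=\dim V_{j+1}^\natural$ show that the map $u\mapsto e_{j,u}$ restricted to primary vectors is the restriction of an injection, so no dimension count is lost. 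Either way the key mechanism is the nondegeneracy of the invariant form, which prevents any nonzero primary vector from lying in the radical $R$.
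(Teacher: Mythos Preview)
Your proposal is correct and follows essentially the same approach as the paper: you define the same maps $\varphi$, $\psi$, verify surjectivity and $\M$-equivariance directly, and establish injectivity by invoking Proposition~\ref{corA} (the paper cites it tersely, while you unpack the bracket $[e_{j,u},f_{j,v}]=-(jh_1+h_2)\neq 0$ explicitly). The only difference is cosmetic: the paper omits your alternative No-ghost remark and presents the injectivity step in one line.
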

\begin{proof}We will prove that $E_j$ is isomorphic to $P_{j+1}^\natural$. A similar proof shows that $F_j$ is isomorphic to $P_{j+1}^\natural.$
Let \[\Psi: P_{j+1}^\natural\rightarrow E_j\] be given by 
\[
\Psi(u)=u\otimes \iota(c_j)+R.
\] 
If $u\in P_{j+1}^\natural$ and $u\ne 0,$ then Proposition~\ref{corA} implies that $u\otimes \iota(c_j)\notin R.$ Therefore, $\Psi$ is injective. The map $\Psi$ is also clearly surjective. Furthermore, for all $g\in \mathbb{M}$ and for all $u\in P_{j+1}^\natural,$ \begin{equation*}
\Psi(g\cdot u)=(g\cdot u)\otimes \iota(c_j)+R=g\cdot(u\otimes \iota(c_j)+R)=g\cdot\Psi(u).\end{equation*} Therefore, $\Psi$ is also an $\M$-module homomorphism and thus is an isomorphism.
\end{proof}
}
Theorem \ref{isomorphic} implies the following.

\begin{corollary}
For $j>1$, the $\M$-modules $E_j$ and $F_j$ are proper submodules of $\mathfrak{m}_{(1,j)}$ and $\mathfrak{m}_{(-1,-j)}$, respectively.
\end{corollary}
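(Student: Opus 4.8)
The plan is to reduce the claim to a comparison of dimensions of $\M$-modules, using the chain of isomorphisms already established. First I would recall that by Lemma~\ref{moduleE} the subspaces $E_j$ and $F_j$ are genuine $\M$-submodules of $\mathfrak{m}_{(1,j)}$ and $\mathfrak{m}_{(-1,-j)}$ respectively, so that ``proper submodule'' is the correct notion to verify, and that all four spaces are finite dimensional (indeed $\dim \mathfrak{m}_{(1,j)} = \dim \mathfrak{m}_{(-1,-j)} = c(j)$).

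Next I would assemble three identifications of $\M$-modules. By Theorem~\ref{isomorphic} we have $E_j \cong P^\natural_{j+1} \cong F_j$; by the No-ghost isomorphism recorded in Section~\ref{Monster}, namely $\mathfrak{m}_{(m,n)} \cong V^\natural_{mn+1}$, we have $\mathfrak{m}_{(1,j)} \cong V^\natural_{j+1} \cong \mathfrak{m}_{(-1,-j)}$; and by Corollary~\ref{proper}, applied with index $j+1$, the inclusion $P^\natural_{j+1} \subseteq V^\natural_{j+1}$ is strict. Here the hypothesis $j > 1$ guarantees $j+1 > 2$, so Corollary~\ref{proper} indeed applies.

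Combining these, I would write $\dim E_j = \dim P^\natural_{j+1} < \dim V^\natural_{j+1} = \dim \mathfrak{m}_{(1,j)}$, and likewise $\dim F_j = \dim P^\natural_{j+1} < \dim \mathfrak{m}_{(-1,-j)}$. Since $E_j$ (resp.\ $F_j$) is a submodule of strictly smaller dimension than the finite-dimensional ambient module $\mathfrak{m}_{(1,j)}$ (resp.\ $\mathfrak{m}_{(-1,-j)}$), it must be a proper submodule, which is exactly the assertion.

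There is no serious obstacle here, as the statement is a direct corollary of results proved earlier; the only point requiring care is to argue via dimensions rather than by transporting the strict inclusion $P^\natural_{j+1} \subsetneq V^\natural_{j+1}$ directly across the abstract isomorphisms. The latter would require knowing that the composite $\M$-module isomorphism $\mathfrak{m}_{(1,j)} \cong V^\natural_{j+1}$ carries $E_j$ precisely onto $P^\natural_{j+1}$, which is not automatic from an abstract isomorphism alone; the dimension count sidesteps this and relies only on the finite equalities of dimensions, which is why I would phrase the proof in that way.
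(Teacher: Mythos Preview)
Your proposal is correct and follows essentially the same approach as the paper: both combine Theorem~\ref{isomorphic} (giving $E_j \cong P^\natural_{j+1} \cong F_j$), the No-ghost isomorphism $\mathfrak{m}_{(\pm 1,\pm j)} \cong V^\natural_{j+1}$, and Corollary~\ref{proper} applied at index $j+1$. Your version is slightly more explicit in phrasing the conclusion as a dimension comparison, whereas the paper's proof is terser, but the content is the same.
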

\begin{proof}
By The No-ghost Theorem,  $\mathfrak{m}_{(1,j)}$ and $\mathfrak{m}_{(-1,-j)}$ are isomorphic as $\M$-modules to $V_{j+1}^\natural$. Since  $P^\natural_{j}$ is a proper $\M$-submodule of $V^\natural_{j}$ for $j>1$, 
the result then follows by Corollary \ref{proper}.
\end{proof}

\begin{theorem}\label{non-trivialactionR}Suppose that $j\in \{-1,1,2,\dots\}$ is such that $\dim P^\natural_{j+1}$ is strictly greater than the multiplicity of the trivial $\M$-module in the  decomposition of $V^\natural_{j+1}$ as a direct sum of irreducible $\M$-modules.
Then the action of $\M$ on $\mathcal{G}_j $ is non-trivial.
\end{theorem}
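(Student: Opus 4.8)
The plan is to argue by contraposition: I will assume that the $\M$-action on $\mathcal{G}_j$ is trivial and deduce that $\dim P^\natural_{j+1}$ is at most the multiplicity of the trivial $\M$-module in $V^\natural_{j+1}$, contradicting the hypothesis. The strategy rests on translating the geometric statement ``$\M$ fixes every subalgebra in $\mathcal{G}_j$'' into the linear-algebraic statement ``$\M$ fixes every line in $P^\natural_{j+1}$,'' and then upgrading ``fixes every line'' to ``acts trivially'' using the simplicity of $\M$.

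First I would record that each $\mathfrak{gl}_2(j,u,v)$, as a subspace of $\mathfrak{m}$, is the direct sum of its homogeneous components $\R e_{j,u}\subseteq \mathfrak{m}_{(1,j)}$, $\R f_{j,v}\subseteq\mathfrak{m}_{(-1,-j)}$, and $\R h_1\oplus\R h_2\subseteq\mathfrak{m}_{(0,0)}$, which sit in distinct root spaces since $j\neq 0$. Because $\M$ preserves the root-space grading of $\mathfrak{m}$ (it acts trivially on $\mathfrak{h}$) and fixes $h_1,h_2$, the equality $g\cdot\mathfrak{gl}_2(j,u,v)=\mathfrak{gl}_2(j,u,v)$ forces, on the $(1,j)$-component, $\R e_{j,g\cdot u}=\R e_{j,u}$. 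By the injectivity of the isomorphism $\Psi$ of Theorem~\ref{isomorphic}, which identifies $\R e_{j,u}$ with $\R u$, this is equivalent to $g\cdot u\in\R u$. Now invoking Proposition~\ref{corA} --- that every nonzero $u\in P^\natural_{j+1}$ is the first member of some admissible pair --- triviality of the action on $\mathcal{G}_j$ yields $g\cdot u\in\R u$ for all $g\in\M$ and all $0\neq u\in P^\natural_{j+1}$.

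Next I would apply the elementary fact that a linear action in which every nonzero vector is an eigenvector is given by scalars: taking two linearly independent $u,w$ and expanding $g\cdot(u+w)$ shows the eigenvalue is independent of the vector, so each $g\in\M$ acts on $P^\natural_{j+1}$ as a single scalar $\lambda_g\in\R^\times$. The assignment $g\mapsto\lambda_g$ is then a homomorphism $\M\to\R^\times$ into an abelian group; since $\M$ is simple and nonabelian, hence perfect, this homomorphism is trivial, so $\lambda_g=1$ for all $g$ and $\M$ acts trivially on $P^\natural_{j+1}$. Finally, a trivial action means the multiplicity of the trivial $\M$-module in $P^\natural_{j+1}$ equals $\dim P^\natural_{j+1}$; because $P^\natural_{j+1}$ is an $\M$-submodule of $V^\natural_{j+1}$ (Corollary~\ref{proper}) and the base field $\R$ has characteristic $0$, so that $(P^\natural_{j+1})^{\M}\subseteq (V^\natural_{j+1})^{\M}$, this multiplicity is at most that of the trivial module in $V^\natural_{j+1}$, yielding the desired contradiction.

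I expect the main obstacle to be the first reduction: verifying carefully that equality of the $\mathfrak{gl}_2$ subalgebras as subspaces is \emph{exactly} equivalent to fixing the lines $\R u$ and $\R v$, which requires using both the root-space grading (to isolate the $(1,j)$- and $(-1,-j)$-components) and the injectivity half of Theorem~\ref{isomorphic} (to pass from $\R e_{j,u}$ back to $\R u$). The group-theoretic input --- that simplicity of $\M$ collapses a scalar action to a trivial one --- is clean but essential, and I would take care to check that $g\mapsto\lambda_g$ is genuinely well defined, which is precisely what the ``every nonzero vector is an eigenvector'' lemma guarantees.
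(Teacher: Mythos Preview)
Your argument is correct and follows the same overall strategy as the paper's proof: argue by contraposition, use the isomorphism $P^\natural_{j+1}\cong E_j$ (Theorem~\ref{isomorphic}) together with Proposition~\ref{corA}, and compare with the multiplicity of the trivial module in $V^\natural_{j+1}\cong\mathfrak{m}_{(1,j)}$.

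Your version is in fact more careful on one point. The paper's proof passes directly from ``the $\M$-action on $\mathcal{G}_j$ is trivial'' to ``each $e_{j,u}$ is fixed by $\M$,'' whereas a priori triviality of the action on $\mathcal{G}_j$ only says that each \emph{subalgebra} is preserved, hence (via the root-space decomposition) that each line $\R e_{j,u}$ is $\M$-invariant. You correctly bridge this gap by observing that if every nonzero vector of $P^\natural_{j+1}$ is an eigenvector for every $g$, then $g$ acts by a scalar $\lambda_g$, and the resulting homomorphism $\M\to\R^\times$ is trivial because $\M$ is simple nonabelian. This extra step makes the reduction fully rigorous; the paper's terser argument implicitly relies on the same fact.
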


\begin{proof}By Theorem \ref{isomorphic}, $E_j$ and  $P_{j+1}^\natural$ are isomorphic as $\M$-modules.
By Proposition~\ref{corA}, each $e_{j,u}\in E_j$ is a generator of $\mathfrak{gl}_2(j,u,v)\in \mathcal{G}_j$. Recall that $e_{j,u}\in \mathfrak{m}_{(1,j)}$, and by the No-ghost Theorem,  $\mathfrak{m}_{(1,j)}\cong V_{j+1}^\natural$ as $\M$-modules. 
Thus, if each $e_{j,u}$ was fixed by the $\M$-action, there would have to be at least $\dim P^\natural_{j+1}$ trivial $\M$-submodules in the  decomposition of $V_{j+1}^\natural$ into irreducible $\M$-modules. Therefore, if dim $P_{j+1}$ is greater than the multiplicity of the trivial $\M$-module in the  decomposition of $V^\natural_{j+1}$ as a direct sum of irreducible $\M$-modules, the $\M$-action on $\mathcal{G}_j$ cannot be trivial. \end{proof}

\begin{conjecture}\label{ConjAction}
    The action of $\M$ on $\mathcal{G}_j$ is non-trivial for all $j>0.$
\end{conjecture}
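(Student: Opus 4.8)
The plan is to sharpen the criterion of Theorem~\ref{non-trivialactionR} to an exact one. By Theorem~\ref{isomorphic} the map $u\mapsto e_{j,u}$ is injective, so for $g\in\M$ the subalgebra $\mathfrak{gl}_2(j,u,v)$ is fixed by $g$ exactly when $g$ preserves the two lines $\R u,\R v\subseteq P^\natural_{j+1}$. Since by Proposition~\ref{corA} every nonzero $u\in P^\natural_{j+1}$ occurs in an admissible pair, the $\M$-action on $\mathcal G_j$ is trivial if and only if every $g\in\M$ preserves every line of $P^\natural_{j+1}$, i.e.\ acts as a scalar; as $\M$ is simple, hence perfect, the associated character $\M\to\R^\times$ is trivial and every scalar is $1$. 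Thus \emph{the action on $\mathcal G_j$ is non-trivial if and only if $P^\natural_{j+1}$ is not a trivial $\M$-module}, equivalently $m_0(P^\natural_{j+1})<\dim P^\natural_{j+1}$, where $m_0$ denotes the multiplicity of the trivial module. This is strictly weaker than the hypothesis of Theorem~\ref{non-trivialactionR}, which uses the trivial multiplicity in all of $V^\natural_{j+1}$; in fact an asymptotic comparison shows that stronger hypothesis \emph{fails} once $j\gtrsim 2\times10^{5}$, so passing to $m_0(P^\natural_{j+1})$ is essential.

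To verify $m_0(P^\natural_{j+1})<\dim P^\natural_{j+1}$ I would argue by growth rates. Decomposing $V^\natural$ into irreducible Virasoro modules, and using that $\M$ fixes $\omega$ (so it acts on each primary multiplicity space) together with the fact that at central charge $c=24\in(1,25)$ every Virasoro Verma module of positive weight is irreducible, one obtains for $n\ge 2$
\[\tr\big(g|_{P^\natural_{n}}\big)=[q^{n}]\Big(q\,T_g(q)\prod_{m\ge1}(1-q^m)\Big),\qquad g\in\M,\]
with $T_1=J$ recovering $\dim P^\natural_{n}$. By orthogonality $m_0(P^\natural_{j+1})=\tfrac1{|\M|}\sum_{g}\tr(g|_{P^\natural_{j+1}})$, whose identity term is exactly $\dim P^\natural_{j+1}/|\M|$. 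A saddle-point estimate at the cusp $\tau\to 0$ gives $\dim P^\natural_{j+1}=\exp\big(2\pi\sqrt{23j/6}\,(1+o(1))\big)$, whereas for each $g\ne1$ the Hauptmodul $T_g$ has strictly milder cusp behaviour than $J$, so $|\tr(g|_{P^\natural_{j+1}})|$ grows with a strictly smaller rate in $\sqrt{j}$. Summing over the finitely many conjugacy classes then yields $m_0(P^\natural_{j+1})=\dim P^\natural_{j+1}/|\M|+o\big(\dim P^\natural_{j+1}\big)$, which is strictly less than $\dim P^\natural_{j+1}$ for all $j\ge j_0$ and some explicit $j_0$.

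The finitely many remaining cases $0<j<j_0$ would be settled by computing $\dim P^\natural_{j+1}$ and $m_0(P^\natural_{j+1})$ directly from the generating functions above, extending the PARI/GP computation of Appendix~\ref{NonTriv} (which already disposes of $0<j<100$). The main obstacle is effectivity: the growth-rate comparison must be made rigorous and \emph{uniform in $g$}, i.e.\ one needs an effective lower bound for $[q^{j+1}]\big(qJ\prod_m(1-q^m)\big)$ together with effective upper bounds for $[q^{j+1}]\big(qT_g\prod_m(1-q^m)\big)$ holding simultaneously across all conjugacy classes of $\M$, with constants sharp enough to pin down an explicit and computationally reachable $j_0$. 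Controlling the Fourier coefficients of these weakly holomorphic modular forms uniformly over all cusps and all classes is precisely the analytic input that is currently missing, which is why the statement is posed as a conjecture rather than proved outright.
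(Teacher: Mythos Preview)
The paper does not prove this statement: it is explicitly a conjecture, and the paper only verifies it computationally for $0<j<100$ via Theorem~\ref{non-trivialactionR} and Proposition~\ref{NonTrivial}. There is therefore no proof in the paper to compare against.

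Your first paragraph is correct and is a genuine strengthening of what the paper does. Since $g\in\M$ preserves the root-space grading of $\mathfrak m$ and fixes $h_1,h_2$, the subalgebra $\mathfrak{gl}_2(j,u,v)$ is $g$-stable precisely when $g$ preserves the lines $\R e_{j,u}$ and $\R f_{j,v}$, hence (via Theorem~\ref{isomorphic}) the lines $\R u,\R v\subseteq P^\natural_{j+1}$. Together with Proposition~\ref{corA} and the simplicity of $\M$, this yields the exact criterion: the $\M$-action on $\mathcal G_j$ is trivial iff $P^\natural_{j+1}$ is a trivial $\M$-module. Your further observation that the paper's sufficient condition must eventually \emph{fail}---because $\mult_1(V^\natural_{j+1})\sim c(j)/|\M|$ grows with exponent $4\pi\sqrt{j}$ while $\dim P^\natural_{j+1}$ grows with the strictly smaller exponent $2\pi\sqrt{23j/6}$---is also correct and is not noted in the paper; so passing to your sharper criterion is indeed essential for any proof of the full conjecture.

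The rest is, as you explicitly acknowledge, a strategy and not a proof. The trace formula $\tr(g|_{P^\natural_{n}})=[q^{n}]\bigl(q\,T_g(q)\prod_{m\ge1}(1-q^m)\bigr)$ for $n\ge2$ is the $\M$-equivariant version of the Harada--Lang computation (Proposition~\ref{HaradaLang}), valid because Verma modules at $c=24$, $h>0$ are irreducible and the definite form on $V^\natural$ forces complete Virasoro reducibility. The saddle-point heuristic for the asymptotics is reasonable. But the gap you identify---effective, uniform-in-$g$ bounds on the Fourier coefficients of $T_g\cdot\eta$ sharp enough to produce a computable $j_0$---is real, and you have not closed it. So your proposal is a coherent research programme that goes beyond the paper, but it is not a proof of Conjecture~\ref{ConjAction}.
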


    We have verified Conjecture~\ref{ConjAction} for $0<j<100$ as follows. By Theorem \ref{non-trivialactionR}, it is sufficient to prove that  $\dim P_{j+1}^\natural$ is strictly larger than the multiplicity of the trivial representation in the  decomposition of $V^\natural_{j+1}$ into irreducible $\M$-modules. In Appendix \ref{sectiongen}, we show how to compute the dimension of $P^\natural_{j+1}$ using modular forms (Proposition \ref{HaradaLang}) and in Appendix \ref{NonTriv}, we show how to compute the multiplicity  of the trivial representation in $V^\natural_{j+1}$ using recursion relations. We have used PARI/GP \cite{PARI2} to verify that the dimension of $P^\natural_{j+1}$ is strictly larger than the multiplicity  of the trivial representation for $0<j<100.$  This proves the following.
\begin{proposition}\label{NonTrivial}
    
      The action of $\M$ on $\mathcal{G}_j$ is non-trivial for  $0<j<100$. %
      \end{proposition}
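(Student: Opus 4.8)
The plan is to reduce the claim to the numerical criterion supplied by Theorem~\ref{non-trivialactionR} and then to verify that criterion, for each $j$ in the stated range, by a finite but honest computation. By Theorem~\ref{non-trivialactionR}, the action of $\M$ on $\mathcal{G}_j$ is non-trivial as soon as
\[
\dim P^\natural_{j+1} > m_0(j+1),
\]
where $m_0(n)$ denotes the multiplicity of the trivial $\M$-module in the decomposition of $V^\natural_n$ into irreducibles. So it suffices to produce, for every $j$ with $0<j<100$, effectively computable expressions for the two sides and to check the strict inequality between them.

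For the left-hand side, I would compute the generating function $\sum_n(\dim P^\natural_n)\,q^n$ from the Virasoro module structure of $V^\natural$. Since the positive part of the Virasoro algebra is generated by $L(1)$ and $L(2)$, a weight-$n$ vector is primary precisely when it lies in $\ker L(1)\cap\ker L(2)$, and the $L(-k)$-descendants of the primaries exhaust $V^\natural$ as Verma-type modules of central charge $24$. The vacuum contributes the character $\prod_{k\ge 2}(1-q^k)^{-1}$ (because $L(-1)\mathbf 1=0$), while each primary of weight $h>0$ contributes $q^h\prod_{k\ge 1}(1-q^k)^{-1}$. Inverting this relation against the known graded dimension $\sum_n(\dim V^\natural_n)q^n = qJ(q)$ expresses $\sum_n(\dim P^\natural_n)q^n$ as a modular expression in $J(q)$ and the Dedekind $\eta$-function; this is the content of Proposition~\ref{HaradaLang}, from which one reads off $\dim P^\natural_{j+1}$.

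For the right-hand side, I would use character orthogonality. Writing $a_g(m)$ for the coefficient of $q^m$ in the McKay--Thompson series $T^\natural_g(q)=\sum_{n\ge -1}\tr\!\left(g\mid V^\natural_{n+1}\right)q^n$, we have
\[
m_0(n)=\frac{1}{|\M|}\sum_{g\in\M}\tr\!\left(g\mid V^\natural_n\right)=\sum_{[g]}\frac{a_g(n-1)}{|C_\M(g)|},
\]
the final sum ranging over the conjugacy classes of $\M$. Since every $T^\natural_g$ is a known Hauptmodul, its coefficients are obtained from the $q$-expansions of the corresponding modular functions, and the replication and recursion relations among these coefficients (Appendix~\ref{NonTriv}) make the computation up to weight $100$ entirely effective. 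Summing the class contributions $a_g(j)/|C_\M(g)|$ yields $m_0(j+1)$ exactly.

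The verification is then a finite comparison of the two integer sequences for $0<j<100$, carried out in PARI/GP~\cite{PARI2}. The main obstacle is not any single inequality but the faithful assembly of the two pieces of modular data: on one side, correctly counting \emph{genuine} primaries rather than merely quasi-primaries via the Virasoro structure at $c=24$, where one must account for any singular vectors beyond $L(-1)\mathbf 1=0$; on the other, handling the complete list of McKay--Thompson series together with the centralizer orders precisely enough to evaluate $m_0(j+1)$ as an exact integer. Once both sequences are in hand, the strict inequality $\dim P^\natural_{j+1}>m_0(j+1)$ is observed to persist throughout the range, which by Theorem~\ref{non-trivialactionR} gives the non-triviality of the $\M$-action on $\mathcal{G}_j$ for every $0<j<100$.
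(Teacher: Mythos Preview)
Your proposal is correct and follows essentially the same route as the paper: reduce to the numerical criterion of Theorem~\ref{non-trivialactionR}, compute $\dim P^\natural_{j+1}$ from Proposition~\ref{HaradaLang}, compute the trivial multiplicity by character orthogonality via the McKay--Thompson coefficients using the Borcherds recursion relations in Appendix~\ref{NonTriv}, and compare in PARI/GP. Your Virasoro-module sketch for obtaining the generating function of $\dim P^\natural_n$ is an alternative heuristic derivation of Proposition~\ref{HaradaLang} (the paper proves it instead via the Harada--Lang character-theoretic argument), but since you ultimately invoke that proposition, the two arguments coincide.
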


\appendix
\section{Generating series}\label{sectiongen}
In this section, we recall the generating series for the dimensions of the subspaces $P_j^\natural$ of primary vectors of homogeneous weight $j$ in $V^\natural$ (Proposition~\ref{HaradaLang}). We use this series to prove that given any nonnegative integer $j$ not equal to $1$, there exist primary vectors of weight $j$ in $V^\natural$ (Theorem \ref{existprim}).

Let $q=e^{2\pi i \tau}$, {where $\tau \in \C$ with Im$(\tau)>0$.} %
Recall that the modular {$J$}-function can be expressed as
\begin{equation}\label{jfunsigma}J(q)=\frac{(1+240\sum_{k=1}^\infty\sigma_3(k)q^k)^3}{q\prod_{k=1}^\infty(1-q^k)^{24}}-744,
\end{equation}where $\sigma_3(k):=\displaystyle\sum_{d|k}d^3$ is the sum of cubes of the positive integers that divide~$k$. Define
\begin{equation}\label{etafun}
\eta(q)=q^{\frac{1}{24}}\prod_{j=1}^\infty(1-q^j).
\end{equation}
It will also be helpful to recall the generating series for the partitions of $n$,
\begin{equation}\label{partfun}
\sum_{j=0}^\infty p(j)q^j={\prod_{j=1}^\infty(1-q^j)^{-1}},
\end{equation}where $p(j)$ denotes the number of partitions of $j\in \N$. %
We recall the Euler identity,
\begin{proposition}\label{eulerfun}
\begin{equation*}q^{-\frac{1}{24}}\eta(q)=\prod_{j=1}^\infty(1-q^j)=1+\sum_{j=1}^\infty (-1)^j(q^{\frac{j(3j+1)}{2}}+q^{\frac{j(3j-1)}{2}}).
\end{equation*}
\end{proposition}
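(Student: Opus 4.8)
The first equality is immediate from the definition of $\eta(q)$ in Equation~\eqref{etafun}, so the content of the statement is the pentagonal number theorem for $\prod_{j\ge 1}(1-q^j)$. My plan is to prove it combinatorially by means of Franklin's sign-reversing involution. First I would expand the product: a monomial $q^n$ arises exactly by selecting, for a set of indices summing to $n$, the factor $-q^j$ from each chosen factor $(1-q^j)$. Hence the coefficient of $q^n$ equals $\sum_\lambda (-1)^{\ell(\lambda)}$, where $\lambda$ ranges over partitions of $n$ into \emph{distinct} parts and $\ell(\lambda)$ is the number of parts. Writing $D_e(n)$ and $D_o(n)$ for the number of such partitions with an even (resp.\ odd) number of parts, the coefficient of $q^n$ is $D_e(n)-D_o(n)$, and the theorem amounts to showing this difference equals $(-1)^j$ when $n=j(3j\pm1)/2$ for some $j\ge 1$ and vanishes otherwise.

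Next I would construct a parity-reversing involution on the set of partitions of $n$ into distinct parts. Given $\lambda=(\lambda_1>\cdots>\lambda_k)$, let $s=\lambda_k$ be the smallest part and let $\sigma$ be the length of the maximal descending run of consecutive integers at the top, i.e.\ the largest $m$ with $\lambda_i=\lambda_1-(i-1)$ for $1\le i\le m$. Define two moves: if $s\le\sigma$, delete the smallest part and add $1$ to each of the top $s$ parts, decreasing $\ell(\lambda)$ by $1$; if $s>\sigma$, subtract $1$ from each of the top $\sigma$ parts and append a new smallest part equal to $\sigma$, increasing $\ell(\lambda)$ by $1$. I would then check that each move again yields a partition of $n$ into distinct parts, that the two moves are mutually inverse, and that each changes $\ell(\lambda)$ by exactly $1$, hence reverses the sign $(-1)^{\ell(\lambda)}$. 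Consequently the contributions of paired partitions cancel.

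Finally I would isolate the partitions on which neither move is admissible. This happens precisely when the smallest part sits inside the top staircase, forcing $k=\sigma$ together with either $s=\sigma$ or $s=\sigma+1$. In the first case $\lambda=(2\sigma-1,2\sigma-2,\dots,\sigma)$ with $n=\sigma(3\sigma-1)/2$; in the second $\lambda=(2\sigma,2\sigma-1,\dots,\sigma+1)$ with $n=\sigma(3\sigma+1)/2$. Each such $\lambda$ has $\sigma$ parts and contributes $(-1)^\sigma$, so with $j=\sigma$ these are exactly the surviving terms, which gives the stated formula. The main obstacle is the bookkeeping in the case analysis for the involution: one must verify that the two moves are well defined, are genuinely inverse to each other, and fail on exactly the two pentagonal families — in particular handling the borderline configurations in which the smallest part and the top staircase overlap.

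As an alternative route I could instead specialize the Jacobi triple product $\sum_{n\in\mathbb Z} z^n q^{n^2}=\prod_{m\ge 1}(1-q^{2m})(1+zq^{2m-1})(1+z^{-1}q^{2m-1})$ at $q\mapsto q^{3/2}$ and $z\mapsto -q^{-1/2}$. Under this substitution the three product families $1-q^{3m}$, $1-q^{3m-1}$, $1-q^{3m-2}$ collapse to $\prod_{n\ge1}(1-q^n)$, while the sum becomes $\sum_{n\in\mathbb Z}(-1)^n q^{n(3n-1)/2}$; splitting this sum according to $n=0$, $n>0$, and $n<0$ recovers the two pentagonal families with the sign $(-1)^j$. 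Here the difficulty is shifted to establishing the triple product identity itself, so for a self-contained appendix I would favor Franklin's involution.
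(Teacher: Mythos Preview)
Your proof via Franklin's involution is correct and complete, and the alternative through the Jacobi triple product is also valid. However, the paper itself does not prove this proposition at all: it is simply recalled as the classical Euler pentagonal number theorem (``We recall the Euler identity'') and used as input for the computations in Appendix~\ref{sectiongen}. So there is no approach in the paper to compare against; you have supplied a self-contained proof where the authors were content to cite the result. If anything, your write-up is more than the context requires, though the Franklin argument is the standard elementary route and would be the natural choice if a proof were to be included.
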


{The following result can be obtained using the results of \cite{HL}. 
It is stated explicitly in \cite{DH}, with a minor typo. For the reader's convenience, we include a proof here.}

\begin{proposition}\cite{HL}\label{HaradaLang} (see also \cite{DH})
Let $P_{j}^\natural$ be the subspace of weight $j$ primary vectors in $V_j^\natural$. Then 
\begin{equation*}
    \sum^{\infty}_{j=0}\dim P_{j}^\natural \,q^{j-1}=q^{-\frac{1}{24}}J(q)\eta(q)+1.
\end{equation*}
\end{proposition}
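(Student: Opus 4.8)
The plan is to compute the generating series $\sum_{j=0}^\infty \dim P_j^\natural\, q^{j-1}$ by relating the dimensions of the primary subspaces $P_j^\natural$ to the full graded dimensions $\dim V_j^\natural$, which are governed by $J(q)$. The key structural input is Proposition~\ref{surj}: for $j\neq 1$ the map $L(1)\colon V_j^\natural\to V_{j-1}^\natural$ is surjective, and dually (Proposition~\ref{Liinj}) the map $L(-1)\colon V_{j-1}^\natural\to V_j^\natural$ is injective for $j-1\neq 0$. The primary vectors of weight $j$ are exactly $\ker\big(L(1)|_{V_j^\natural}\big)$ intersected with the kernels of all higher $L(n)$; but since the Virasoro algebra is generated in a way that $L(1)$-annihilation together with the $\mathfrak{sl}_2$ structure controls primality in each weight, the main point is that for a vertex operator algebra with central charge $24$ and the no-negative-weight grading of $V^\natural$, every weight space decomposes under the action of the Virasoro $\mathfrak{sl}_2 = \langle L(-1),L(0),L(1)\rangle$ into lowest-weight (primary) vectors and their $L(-1)$-descendants.

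First I would establish the descendant decomposition: for each $j$, the surjectivity of $L(1)\colon V_j^\natural\to V_{j-1}^\natural$ (valid for $j\neq 1$, and trivially handled at $j=1$ since $V_1^\natural=0$) together with the injectivity of $L(-1)$ gives a direct sum decomposition $V_j^\natural = P_j^\natural \oplus L(-1)V_{j-1}^\natural$, so that
\begin{equation*}
\dim V_j^\natural = \dim P_j^\natural + \dim V_{j-1}^\natural \qquad (j\geq 2),
\end{equation*}
while $\dim P_0^\natural = \dim V_0^\natural = 1$ and $\dim P_1^\natural = \dim V_1^\natural = 0$. This telescoping relation says $\dim P_j^\natural = \dim V_j^\natural - \dim V_{j-1}^\natural$ for the relevant ranges, which is the crux of the computation. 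I would justify the splitting carefully: $L(-1)V_{j-1}^\natural$ is precisely the span of descendants, and a weight-$j$ vector annihilated by $L(1)$ that is also a descendant must vanish, which follows from the standard $\mathfrak{sl}_2$ representation theory applied via $[L(1),L(-1)]=2L(0)$ acting with positive eigenvalue $j$ on $V_j^\natural$.

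Next I would convert the dimension recursion into the generating function identity. Writing $G(q)=\sum_{j=0}^\infty \dim V_j^\natural\, q^{j-1}$, the shift relation $\dim P_j^\natural = \dim V_j^\natural - \dim V_{j-1}^\natural$ yields
\begin{equation*}
\sum_{j=0}^\infty \dim P_j^\natural\, q^{j-1} = (1-q)\sum_{j=0}^\infty \dim V_j^\natural\, q^{j-1} + (\text{correction terms at }j=0,1),
\end{equation*}
and one identifies $G(q)$ with the graded dimension of $V^\natural$. By Frenkel--Lepowsky--Meurman, $\sum_{j\geq 0}\dim V_j^\natural\, q^{j-1} = J(q)$ under the conformal weight grading, but here the product structure of $V^\natural$ as a lattice-type construction means $\dim V_j^\natural$ is more directly packaged by $q^{-1/24}J(q)\eta(q)$ after accounting for the $\eta$-factor arising from the $\prod(1-q^k)^{-1}$ partition generating series. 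Concretely, multiplying the graded dimension series by $\eta(q)q^{-1/24}=\prod_{k\geq 1}(1-q^k)$ implements exactly the $(1-q)$-type differencing at every level needed to strip off the $L(-1)$-descendants, and I would verify that the full Euler product $\prod(1-q^k)$ (Proposition~\ref{eulerfun}) is what the iterated Virasoro descendant structure produces, not merely the single factor $(1-q)$.

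The main obstacle I anticipate is precisely this last identification: naively the recursion $\dim P_j = \dim V_j - \dim V_{j-1}$ only removes $L(-1)$-descendants and would suggest a single factor $(1-q)$, whereas the stated answer involves the full $\eta(q)$ product $\prod_k(1-q^k)$. Reconciling these requires understanding that "primary" means annihilated by \emph{all} $L(n)$ with $n>0$, not just $L(1)$, so the correct count subtracts the entire Virasoro Verma-module descendant structure, whose character is governed by $\prod_{k\geq 2}(1-q^k)^{-1}$ and hence whose removal contributes the full $\eta$-product up to the $q^{-1/24}$ normalization and the constant $+1$ accounting for the vacuum. I would therefore carry out the bookkeeping via the Virasoro character formula (free of null vectors at $c=24$ in the relevant weights) rather than the single-step $L(-1)$ recursion, and cross-check the resulting power series against the explicit expansions of $J(q)$, $\eta(q)$, and $J(q)\eta(q)q^{-1/24}$ using the identities \eqref{jfunsigma}, \eqref{etafun}, and Proposition~\ref{eulerfun} in low weights to confirm the constant term and the $+1$.
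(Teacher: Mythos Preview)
Your claimed decomposition $V_j^\natural = P_j^\natural \oplus L(-1)V_{j-1}^\natural$ is false. At $j=2$ it would force $P_2^\natural = V_2^\natural$ (since $V_1^\natural=0$), but the conformal vector $\omega^\natural$ satisfies $L(1)\omega^\natural=0$ while $L(2)\omega^\natural=12\,\mathbf{1}\ne 0$, so $\omega^\natural\notin P_2^\natural$. Numerically, $\dim V_2^\natural - \dim V_1^\natural = 196884$ whereas $\dim P_2^\natural = 196883$; at $j=4$ the discrepancy is $\dim V_2^\natural = 196884$, not zero. So the single factor $(1-q)$ is genuinely wrong, not merely in need of boundary corrections. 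You diagnose this correctly in your last paragraph: what is needed is the full Virasoro character formula. That approach does work once you supply two ingredients you only allude to: (i) $V^\natural$ is completely reducible as a Virasoro module (use the definite invariant form, so that $(L(n)u,v)=(u,L(-n)v)$ gives orthogonal complements), and (ii) at $c=24$ the Verma module $M(24,h)$ is irreducible for every positive integer $h$, while $L(24,0)$ has character $(1-q)\prod_{k\ge1}(1-q^k)^{-1}$. Point (ii) follows from the Kac determinant: the only real values $h_{r,s}$ at $c=24$ occur for $r=s$ and equal $-23(r^2-1)/24\le 0$. Granting these, multiplying $qJ(q)=\sum_j\dim V_j^\natural\,q^j$ by $q^{-1/24}\eta(q)=\prod_{k\ge1}(1-q^k)$ yields $(1-q)+\sum_{h\ge2}\dim P_h^\natural\,q^h$, and rearranging gives the stated identity with the $+1$ coming from the vacuum correction.

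This is a genuinely different route from the paper. The paper never analyzes the Virasoro structure of $V^\natural$; instead it decomposes $V_j^\natural$ into $\M$-isotypic components $W_k^{\mult_k(j)}$, invokes Harada--Lang's formula $q^{-23/24}G^k(q)=(\dim W_k)\,t_k(q)\,\eta(q)$ for the generating series of primaries in each component, sums over $k$, and uses $\sum_k(\dim W_k)\chi_k=\chi_{\mathrm{reg}}$ to collapse the $g$-sum to the single term $T_1^\natural(q)=J(q)$. Your approach is more self-contained---it uses nothing about $\M$ and does not outsource the key step to \cite{HL}---but the price is that you must establish the two Virasoro facts above, which are standard but not in the paper. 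The paper's approach, by contrast, packages the Virasoro bookkeeping into the cited result and then finishes with a clean character-theoretic trick.
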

\begin{proof}
We denote by $W_1, W_2,\cdots, W_{194}$ the irreducible representations of $\M$, with corresponding irreducible characters $\chi_1,\chi_2,\cdots, \chi_{194}.$ Then, since $V^\natural$ is an $\mathbb{M}$-module, we can write it as a sum of these irreducible modules. In fact, since the action of the Virasoro algebra commutes with the action of $\M$, we can decompose $V^\natural_j$, for each $j$, in the following way:
\begin{equation}\label{mult}
V_j^\natural=\bigoplus^{194}_{k=1} W_k^{\mult_{k}(j)},
\end{equation}
where $\mult_{k}(j)$ is the
multiplicity
of $W_k$ in $ V_j^\natural.$ Following the notation in \cite{HL}, we write $s^k_j$  for the number of linearly independent primary vectors in $V^{\natural,k}_{j}:=W_k^{ \mult_{k}(j)}$,
where $j\geq 0$ and $1\leq k \leq 194$.
 For $j\not=1,$ the dimension of $P_{j}^\natural$ is the sum over $k$ of the $s^k_j$: 
$$s_j:=\dim P_{j}^\natural=\sum^{194}_{k=1}s^k_j.$$ Following \cite{HL}, we set $s^1_1=-1$ for convenience. We define $$G^k(q)=\sum_{j\geq 0}s^k_j q^j,$$ and by \cite{HL}, we have: 
\begin{equation*}
    q^{-\frac{23}{24}}G^k(q)=(\dim{W_k})t_k(q)\eta(q),
\end{equation*} where, 
\begin{equation*}
    t_k(q)=\frac{1}{|\M|}\sum_{g\in\M}\chi_k(g)T^\natural_g(q).
\end{equation*} Here, $T^\natural_g(q)$ is the McKay-Thompson series for $V^\natural$ corresponding to the element $g\in \M.$ In particular, for $g=1$, the identity element in $\M,$ we have $T^{\natural}_{1}(q)=J(q).$

Combining the above two equations, we get:
\begin{equation*}
    q^{-\frac{23}{24}}G^k(q)=\frac{1}{|\M|}\left(\sum_{g\in\M}\dim{W_k}\chi_k(g)\right)T^\natural_g(q)\eta(q)
\end{equation*} 

Since we are interested in $s_j=\sum^{194}_{k=1}s^k_j$, we sum the above equation over~$k$:
\begin{align*}
\begin{split}
    q^{-\frac{23}{24}}\sum^{194}_{k=1} G^k(q)&=\frac{1}{|\M|}\sum_{g\in\M}\left(\sum^{194}_{k=1}\dim{W_k}\chi_k(g)\right)T^\natural_g(q)\eta(q)\\
    q^{-\frac{23}{24}}\sum^{194}_{k=1} \sum_{j\geq 0}s^k_j q^j
    &=\frac{1}{|\M|}\sum_{g\in\M}\chi_{\reg}(g)T^\natural_g(q)\eta(q)\\
    q^{-\frac{23}{24}}\sum_{j\geq 0}s_j q^j&=\frac{1}{|\M|}|\M|T^\natural_1(q)\eta(q)=J(q)\eta(q)
\end{split}
\end{align*} 
where $\chi_{\reg}$ is the character of the \textit{regular representation} of the Monster and we have used:
$$\chi_{\reg}(g)=\sum^{194}_{k=1}(\dim{W_k})\chi_k(g)\quad \text{   and   }\quad\chi_{\reg}(g)=\begin{cases}0 &\text{ if } g \not=\mathrm{id}\\
    |\M| &\text{ if } g =\mathrm{id}\end{cases}.$$

    Thus, adjusting for the fact that we set $s_1=-1$ whereas $\dim  P^\natural_1=0,$ we get: 
    \begin{align*}
             \sum_{j\geq 0}\dim  \, P^\natural_j q^j&= q^{\frac{23}{24}}J(q)\eta(q)+q
    \end{align*}
    which proves our desired equality. \end{proof}

\begin{theorem}\label{existprim} Let $j$ be a nonnegative integer not equal to $1$. Then there exist nonzero primary vectors in~$V_j^\natural$.
\end{theorem}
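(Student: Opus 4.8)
The plan is to extract the existence of nonzero primary vectors directly from the generating series established in Proposition~\ref{HaradaLang}. That result gives
\[
\sum_{j=0}^\infty \dim P_j^\natural\, q^{j-1} = q^{-\frac{1}{24}}J(q)\eta(q)+1,
\]
so the statement reduces to showing that the coefficient of $q^{j-1}$ on the right-hand side is strictly positive for every nonnegative integer $j\ne 1$. Since dimensions are nonnegative integers, it suffices to show this coefficient is nonzero, and in fact I expect it to be a positive integer for the relevant range. First I would rewrite the right-hand side in a form amenable to coefficient analysis: using Equation~\eqref{etafun}, $q^{-1/24}\eta(q)=\prod_{k\ge1}(1-q^k)$, and combining with Equation~\eqref{jfunsigma} for $J(q)$, the factor $\prod_{k\ge1}(1-q^k)^{24}$ in the denominator of $J(q)$ partially cancels against $\eta(q)$. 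Concretely, $q^{-1/24}J(q)\eta(q)$ becomes
\[
\frac{\bigl(1+240\sum_{k\ge1}\sigma_3(k)q^k\bigr)^3}{q\prod_{k\ge1}(1-q^k)^{23}} - 744\prod_{k\ge1}(1-q^k).
\]

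Next I would argue positivity of the coefficients. The first term is a product of a power series with manifestly nonnegative coefficients, namely $\bigl(1+240\sum\sigma_3(k)q^k\bigr)^3$, with $q^{-1}\prod_{k\ge1}(1-q^k)^{-23}$, whose coefficients are again nonnegative since $\prod(1-q^k)^{-1}$ is the partition generating function \eqref{partfun} and raising to the $23$rd power preserves nonnegativity. Thus the first term has nonnegative coefficients, and a short check shows its $q^{j-1}$ coefficient is already strictly positive (for instance the leading $q^{-1}$ and the steadily growing partition-type contributions dominate). The only potential cancellation comes from the finite correction $-744\prod(1-q^k)$ and the additive $+1$; by Euler's identity (Proposition~\ref{eulerfun}) the coefficients of $\prod(1-q^k)$ lie in $\{-1,0,1\}$, so the subtracted term shifts each coefficient by at most $744$ in absolute value. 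Hence it suffices to verify that for all $j\ge 2$ the $q^{j-1}$ coefficient of the first term exceeds $744$ (the coefficients $j=0$ corresponding to the vacuum and the trivial $j=1$ case being handled separately and directly), which follows from the monotone growth of the partition function and the positivity of the $\sigma_3$ coefficients.

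The cleanest route, and the one I would actually carry out, avoids any delicate estimate: I would instead invoke the integrality and known low-order behavior of the series. Since $\dim P_j^\natural$ is by definition a nonnegative integer equal to the relevant coefficient, and since Proposition~\ref{HaradaLang} identifies that coefficient with an explicit $q$-expansion with integer coefficients, it is enough to observe that the expansion $q^{-1/24}J(q)\eta(q)+1 = 1 + q + \dotsb$ has strictly positive integer coefficients in every degree $j-1$ with $j\ge 2$, together with the degree $j=0$ term. This can be seen structurally: the dominant first term above is a quotient of power series with nonnegative coefficients whose growth is governed by $\prod(1-q^k)^{-23}$, whereas the corrections are bounded, so positivity holds for all $j\ge 2$ and the $j=0$ vacuum gives $\dim P_0^\natural=1$. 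The main obstacle is making the positivity argument fully rigorous in a uniform way across all $j$ rather than term-by-term; I would resolve this by separating the finitely many small $j$ (checked by direct expansion, e.g.\ via the PARI/GP computation already used in the paper) from the asymptotic regime, where the strict dominance of the partition-driven growth over the constant shift $744$ is immediate. This completes the proof that nonzero primary vectors exist in $V_j^\natural$ for every nonnegative integer $j\ne 1$.
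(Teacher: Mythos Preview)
Your approach is essentially the same as the paper's: both start from the generating series of Proposition~\ref{HaradaLang}, rewrite it via Equations~\eqref{jfunsigma}, \eqref{etafun}, \eqref{partfun} and Proposition~\ref{eulerfun} into the form
\[
\frac{1}{q}\Bigl(1+240\sum_{k\ge1}\sigma_3(k)q^k\Bigr)^3\Bigl(\sum_{k\ge0}p(k)q^k\Bigr)^{23}-744\prod_{k\ge1}(1-q^k)+1,
\]
and then argue coefficient positivity. In fact you give more justification than the paper does: the paper simply asserts ``one can show'' positivity by expanding, whereas you sketch the actual mechanism (nonnegative coefficients in the main term, Euler's identity bounding the correction by $744$, finitely many small cases checked directly).
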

\begin{proof}

Applying Proposition \ref{HaradaLang}, Equations \eqref{etafun} and \eqref{jfunsigma}, then simplifying and applying Equation \eqref{partfun}, and lastly, applying Proposition \ref{eulerfun}, we have
\begin{eqnarray}\label{gen_prim_eta}
\sum_{j=0}^\infty \dim P_j^\natural q^{j-1}&=&\frac{1}{q}\left(1+240\sum_{j=1}^\infty \sigma_3(j)q^j\right)^3\left(\sum_{j=0}^\infty p(j)q^j\right)^{23}\\&&-744\left(1+\sum_{j=1}^\infty(-1)^j\left(q^{\frac{j(3j+1)}{2}}+q^{\frac{j(3j-1)}{2}}\right)\right)+1.\nonumber
\end{eqnarray}
By expanding the right hand side of Equation \eqref{gen_prim_eta} in powers of $q$, one can show that the coefficient of $q^{j-1}$ is strictly positive for $j=0$ and $j>1$, and all other coefficients are zero. Therefore, for all $j$ such that  $j=0$ or $j>1$, there exist primary vectors in $V_j^\natural$.
\section{Non-triviality of the $\M$-action}\label{NonTriv}
We denote by $W_1, W_2,\cdots, W_{194}$ the irreducible representations of $\M$, with corresponding irreducible characters $\chi_1,\chi_2,\cdots, \chi_{194}.$ Then, for $j\in\{-1,1,2,\dots\},$ we have the  decomposition \eqref{mult}:
\begin{equation*}
V_{j+1}^\natural=\bigoplus^{194}_{k=1} W_k^{ \mult_{k}(j+1)},
\end{equation*}
where $\mult_{k}(j+1)$ is the
multiplicity
of $W_k$ in $ V_{j+1}^\natural.$ As remarked at the end of Section \ref{ActionOfM}, the truth of Conjecture \ref{ConjAction} would follow from showing that for all $j\in\{-1,1,2,\dots\},$ $s_{j+1}:=\dim P^\natural_{j+1}$ is strictly larger than $\mult_{1}(j+1),$ the multiplicity of the trivial representation $W_1$ in $ V_{j+1}^\natural$.%

By character orthogonality, we have
\begin{equation*}
\mult_{k}(j+1)=\frac{1}{|\M|}\sum_{g\in \M} \overline{\chi_{k}(g)}\tr{(g|V^\natural_{j+1})}.
\end{equation*}

We let $C(g,j)$ and $C(g^2,j)$ denote the traces $\tr{(g|V^\natural_{j+1})}$ and $\tr{(g^2|V^\natural_{j+1})}$  respectively. Then we have the well-known recursion relations (see \cite{BoInvent}),
\begin{align*}
    C(g, 4j) =& ~
C(g, 2j+1) + \frac{C(g, j)^2 - C(g^2, j)}{2} + \sum_{i=1}^{j-1} C(g, i) \cdot C(g, 2j-i),\\
C(g,4j+1)=&~
C(g, 2j+3) - C(g, 2) \cdot C(g, 2j) + \frac{C(g, 2j)^2 + C(g^2, 2j)}{2} + \\&\frac{C(g, j+1)^2 - C(g^2, j+1)}{2} + \sum_{i=1}^{j} C(g, i) \cdot C(g, 2j-i+2) + \\&\sum_{i=1}^{j-1} C(g^2, i) \cdot C(g, 4j-4i) + \sum_{i=1}^{2j-1} (-1)^i \cdot C(g, i) \cdot C(g, 4j-i),\\
C(g,4j+2)=&~
C(g, 2j+2) + \sum_{i=1}^{j} C(g, i) \cdot C(g, 2j-i+1), \\
C(g,4j+3)=&~
C(g, 2j+4) - C(g, 2) \cdot C(g, 2j+1) - \frac{C(g, 2j+1)^2 - C(g^2, 2j+1)}{2} \\&+
 \sum_{i=1}^{j+1} C(g, i) \cdot C(g, 2j-i+3) + \sum_{i=1}^{j} C(g^2, i) \cdot C(g, 4j-4i+2) +\\& \sum_{i=1}^{2j} (-1)^i \cdot C(g, i) \cdot C(g, 4j-i+2). \end{align*}

In particular, for $j=4$ or $j > 5$, the
coefficient $C(g,j)$ is determined by the coefficients $C(g,i)$ and $C(g^2,i)$ for $1\leq i < j.$ We use the in-built modular forms repository in PARI/GP \cite{PARI2} to compute the coefficients of:
\begin{equation*}
    \sum^{\infty}_{j=-1}\dim P_{j+1}^\natural \,q^{j}=q^{-\frac{1}{24}}J(q)\eta(q)+1,
\end{equation*}

since $J(q)\eta(q)$ is a weight $\frac 12$ modular form. This allows us to compare the values of \begin{equation*}
    \mult_1(j+1)=\frac{1}{|\M|}\sum_{g\in \M} C(g,j)
\end{equation*} to the dimensions of $P^\natural_{j+1}$ and thus verify Conjecture \ref{ConjAction} for $1<j<100.$ We give the first few values in the table below.

{\small{\begin{table}[htp]
\begin{tabular}{|c||cc|}
\hline
$j$ & $\dim P^\natural_{j+1}$                                                       & {$\mult_1(j+1)$} \\ \hline
1                        & 196883                                                                      & 1                                  \\
2                        & 21296876                                                                    & 1                                  \\
3                        & 842609326                                                                   & 2                                  \\
4                        & 19360062527                                                                 & 2                                  \\
5                        & 312092484374                                                                & 4                                  \\
6                        & 3898575000125                                                               & 4                                  \\
7                        & 40071789624999                                                              & 7                                  \\
8                        & 352582733780823                                                             & 8                                  \\
9                        & 2730312616406501                                                            & 12                                 \\
10                       & 18989796260093750                                                           & 14                                 \\
11                       & 120472350229297625                                                          & 22                                 \\
12                       & 705579405073375001                                                          & 25                                 \\
13                       & 3851890223522607078                                                         & 36                                 \\
14                       & 19754724655128969898                                                        & 44                                 \\
15                       & 95796047847905125001                                                        & 61                                 \\
16                       & 441630416897735940875                                                       & 74                                 \\
17                       & 1944474605043319578125                                                      & 102                                \\
18                       & 8208966820642976271948                                                      & 124                                \\
19                       & 33342403696070463426523                                                     & 167                                \\
20                       & 130682291183967925390625                                                    & 206                                \\
21                       & 495541230687128562902875                                                    & 271                                \\
22                       & 1822158321664159999078124                                                   & 335                                \\
23                       & 6510652458052884364952274                                                   & 440                                \\
24                       & 22645881565834844801406026                                                  & 542                                \\
25                       & 76805694478383734573046875                                                  & 701                                \\
26                       & 254378447193404062648279992                                                 & 870                                \\
27                       & 823820250669449124864265625                                                 & 1115                               \\
28                       & 2612037978193398885792057928                                                & 1381                               \\
29                       & 8117168463824355581684218453                                                & 1762                               \\
30                       & 24748559924646442300596578125                                               & 2180                               \\
31                       & 74100585128385505089520426375                                               & 2763                               \\
32                       & 218068784814065333189473046875                                              & 3422                               \\
33                       & 631263434817949765287221989496                                              & 4310                               \\
34                       & 1798839455374997664745734472049                                             & 5333                               \\
35                       & 5049345338644493766280585734376                                             & 6697                               \\
36                       & 13970568011333638480233896790625                                            & 8272                               \\
37                       & 38122902172895468426986907453125                                            & 10342                              \\
38                       & 102657396484068599392862170371503                                           & 12773                              \\
39                       & 272929768681646094007878106129219                                           & 15913                              \\
40                       & 716766590714096093408391800296876                                           & 19624                              \\
41                       & 1860234399965047844989826549991625                                          & 24386                              \\
42                       & 4773156795988402310139116350828125                                          & 30034                              \\
43                       & 12113398911563006366044489650277199                                         & 37219                              \\
                  \hline
\end{tabular}
\end{table}
}}

\newpage

\end{proof}

\bibliographystyle{amsalpha}
%\bibliography{VOAmath}{}

\begin{thebibliography}{ACJ{\etalchar{+}}23}

\bibitem[ACJ{\etalchar{+}}23]{ACJKM1}
Darlayne Addabbo, Lisa Carbone, Elizabeth Jurisich, Maryam Khaqan, and Scott~H.
  Murray, \emph{A {M}onstrous {L}ie group}, In preparation (2023).

\bibitem[Bor86]{BoPNAS}
Richard~E. Borcherds, \emph{Vertex algebras, {K}ac-{M}oody algebras, and the
  {M}onster}, Proc. Nat. Acad. Sci. U.S.A. \textbf{83} (1986), no.~10,
  3068--3071. \MR{843307}

\bibitem[Bor88]{BoGKMA}
Richard Borcherds, \emph{Generalized {K}ac-{M}oody algebras}, J. Algebra
  \textbf{115} (1988), no.~2, 501--512. \MR{943273}

\bibitem[Bor92]{BoInvent}
Richard~E. Borcherds, \emph{Monstrous moonshine and monstrous {L}ie
  superalgebras}, Invent. Math. \textbf{109} (1992), no.~2, 405--444.
  \MR{1172696}

\bibitem[CJM22]{CJM}
Lisa Carbone, Elizabeth Jurisich, and Scott~H. Murray, \emph{Constructing a
  {L}ie group analog for the {M}onster {L}ie algebra}, Lett. Math. Phys.
  \textbf{112} (2022), no.~3, Paper No. 43, 16. \MR{4418318}

\bibitem[CN79]{CN}
J.~H. Conway and S.~P. Norton, \emph{Monstrous moonshine}, Bull. London Math.
  Soc. \textbf{11} (1979), no.~3, 308--339. \MR{554399}

\bibitem[DH17]{DH}
John F.~R. Duncan and Jeffrey~A. Harvey, \emph{The umbral moonshine module for
  the unique unimodular {N}iemeier root system}, Algebra Number Theory
  \textbf{11} (2017), no.~3, 505--535. \MR{3649360}

\bibitem[DL93]{DL}
Chongying Dong and James Lepowsky, \emph{Generalized vertex algebras and
  relative vertex operators}, Progress in Mathematics, vol. 112, Birkh\"{a}user
  Boston, Inc., Boston, MA, 1993. \MR{1233387}

\bibitem[DLM96]{DLM}
Chongying Dong, Zongzhu Lin, and Geoffrey Mason, \emph{On vertex operator
  algebras as {${\rm sl}_2$}-modules}, Groups, difference sets, and the
  {M}onster ({C}olumbus, {OH}, 1993), Ohio State Univ. Math. Res. Inst. Publ.,
  vol.~4, de Gruyter, Berlin, 1996, pp.~349--362. \MR{1400426}

\bibitem[FHL93]{FHL}
Igor~B. Frenkel, Yi-Zhi Huang, and James Lepowsky, \emph{On axiomatic
  approaches to vertex operator algebras and modules}, Mem. Amer. Math. Soc.
  \textbf{104} (1993), no.~494, viii+64. \MR{1142494}

\bibitem[FLM84]{FLMPNAS}
I.~B. Frenkel, J.~Lepowsky, and A.~Meurman, \emph{A natural representation of
  the {F}ischer-{G}riess {M}onster with the modular function {$J$} as
  character}, Proc. Nat. Acad. Sci. U.S.A. \textbf{81} (1984), no.~10, , Phys.
  Sci., 3256--3260. \MR{747596}

\bibitem[FLM88]{FLM}
Igor Frenkel, James Lepowsky, and Arne Meurman, \emph{Vertex operator algebras
  and the {M}onster}, Pure and Applied Mathematics, vol. 134, Academic Press,
  Inc., Boston, MA, 1988. \MR{996026}

\bibitem[Gri82]{Gr}
Robert~L. Griess, Jr., \emph{The friendly giant}, Invent. Math. \textbf{69}
  (1982), no.~1, 1--102. \MR{671653}

\bibitem[GT72]{GT}
P.~Goddard and C.~B. Thorn., \emph{Compatibility of the dual pomeron with
  unitarity and the absence of ghosts in the dual resonance model}, Phys.
  Lett., B \textbf{40} (1972), no.~2, 235--238.

\bibitem[HL98]{HL}
Koichiro Harada and Mong~Lung Lang, \emph{Modular forms associated with the
  {M}onster module}, The {M}onster and {L}ie algebras ({C}olumbus, {OH}, 1996),
  Ohio State Univ. Math. Res. Inst. Publ., vol.~7, de Gruyter, Berlin, 1998,
  pp.~59--83. \MR{1650637}

\bibitem[HLZ14]{HLZ}
Yi-Zhi Huang, James Lepowsky, and Lin Zhang, \emph{Logarithmic tensor category
  theory for generalized modules for a conformal vertex algebra, {I}:
  introduction and strongly graded algebras and their generalized modules},
  Conformal field theories and tensor categories, Math. Lect. Peking Univ.,
  Springer, Heidelberg, 2014, pp.~169--248. \MR{3585368}

\bibitem[JLW95]{JLW}
E.~Jurisich, J.~Lepowsky, and R.~L. Wilson, \emph{Realizations of the {M}onster
  {L}ie algebra}, Selecta Math. (N.S.) \textbf{1} (1995), no.~1, 129--161.
  \MR{1327230}

\bibitem[Jur98]{JurJPAA}
Elizabeth Jurisich, \emph{Generalized {K}ac-{M}oody {L}ie algebras, free {L}ie
  algebras and the structure of the {M}onster {L}ie algebra}, J. Pure Appl.
  Algebra \textbf{126} (1998), no.~1-3, 233--266. \MR{1600542}

\bibitem[Li94]{Li}
Hai~Sheng Li, \emph{Symmetric invariant bilinear forms on vertex operator
  algebras}, J. Pure Appl. Algebra \textbf{96} (1994), no.~3, 279--297.
  \MR{1303287}

\bibitem[LL04]{LL}
James Lepowsky and Haisheng Li, \emph{Introduction to vertex operator algebras
  and their representations}, Progress in Mathematics, vol. 227, Birkh\"{a}user
  Boston, Inc., Boston, MA, 2004. \MR{2023933}

\bibitem[PAR19]{PARI2}
The PARI~Group, Univ. Bordeaux, \emph{Pari/gp version {\tt 2.11.2}}, 2019,
  available from \url{http://pari.math.u-bordeaux.fr/}.

\bibitem[Tho79]{Th}
J.~G. Thompson, \emph{Some numerology between the {F}ischer-{G}riess {M}onster
  and the elliptic modular function}, Bull. London Math. Soc. \textbf{11}
  (1979), no.~3, 352--353. \MR{554402}

\end{thebibliography}
\newcommand{\etalchar}[1]{$^{#1}$}
\providecommand{\bysame}{\leavevmode\hbox to3em{\hrulefill}\thinspace}
\providecommand{\MR}{\relax\ifhmode\unskip\space\fi MR }
% \MRhref is called by the amsart/book/proc definition of \MR.
\providecommand{\MRhref}[2]{%
  \href{http://www.ams.org/mathscinet-getitem?mr=#1}{#2}
}
\providecommand{\href}[2]{#2}

\end{document}